\theoremstyle{plain}
\newtheorem{thm}{Theorem}[section]
\newtheorem{lemma}[thm]{Lemma}
\newtheorem{prop}[thm]{Proposition}
\newtheorem{cor}[thm]{Corollary}
\theoremstyle{definition}
\newtheorem{oss}[thm]{Remark}
\newcommand{\R}{\mathbb{R}}
\newcommand{\C}{\mathbb{C}}
\newcommand{\Pp}{\mathbb{P}}
\renewcommand{\dd}{\mathrm{d}}
\newcommand{\E}{\mathbb{E}}
\renewcommand{\Re}{{\rm Re}\,}
\renewcommand{\Im}{{\rm Im}\,}
\def\Holk{H^0(M,L^k)\xspace}
\def\N{\mathbb{N}\xspace}
\def\R{\mathbb{R}\xspace}
\newcommand\blfootnote[1]{
  \begingroup
  \renewcommand\thefootnote{}\footnote{#1}
  \addtocounter{footnote}{-1}
  \endgroup
}
\title{Berezin--Toeplitz operators, Kodaira maps, and random sections}
\author{Michele Ancona \thanks{Institut de Recherche Math\'ematique Avanc\'ee, UMR 7501 Universit\'e de Strasbourg et CNRS. 7 rue Ren\'e-Descartes, 67000 Strasbourg, France. {\em E-mail:}  \texttt{michi.ancona@gmail.com}} \and Yohann Le Floch \thanks{Institut de Recherche Math\'ematique Avanc\'ee, UMR 7501 Universit\'e de Strasbourg et CNRS. 7 rue Ren\'e-Descartes, 67000 Strasbourg, France. {\em E-mail:} \texttt{ylefloch@unistra.fr} }}
\date{}
\begin{document}
\maketitle

\begin{abstract}
We study the zeros of sections of the form $T_k s_k$ of a large power $L^{\otimes k} \to M$ of a holomorphic positive Hermitian line bundle over a compact K\"ahler manifold $M$, where $s_k$ is a random holomorphic section of $L^{\otimes k}$ and $T_k$ is a Berezin-Toeplitz operator, in the limit $k \to +\infty$. In particular, we compute the second order approximation of the expectation of the distribution of these zeros. In a ball of radius of order $k^{-\frac{1}{2}}$ around $x \in M$, assuming that the principal symbol $f$ of $T_k$ is real-valued and vanishes transversally, we show that this expectation exhibits two drastically different behaviors depending on whether $f(x) = 0$ or $f(x) \neq 0$. These different regimes are related to a similar phenomenon about the convergence of the normalized Fubini-Study forms associated with $T_k$:  they converge to the K\"ahler form in the sense of currents as $k\rightarrow + \infty$, but not as differential forms (even pointwise). This contrasts with the standard case $f=1$, in which the convergence is in the $\mathscr{C}^{\infty}$--topology. From this, we are able to recover the zero set of $f$ from the zeros of $T_k s_k$.

\end{abstract}

\blfootnote{\,\,\emph{2020 Mathematics Subject Classification.} 81S10, 81Q20, 53D50, 32L05, 32Q15, 33C05, 60G15. \\
\indent\indent \emph{Key words and phrases.} Random sections, Berezin-Toeplitz operators, Currents of integration, Fubini-Study forms.}

\newpage

\section{Introduction}

The motivation for this paper stems from the following inverse problem: given the action of a quantum observable on random quantum states, can one recover properties of the underlying classical observable?

This question is part of the broader goal to study quantum footprints of classical observables, which has been the object of intense research in the last decades. Here we are specifically interested in the following inverse problem: if $f \in \mathscr{C}^{\infty}(M)$ is a classical observable on a phase space $M$ and $T: \mathcal{H} \to \mathcal{H}$ is a quantum observable quantizing $f$ acting on a Hilbert space $\mathcal{H}$, which properties of $f$ can be derived from the study of $T$? This type of inverse problems is often seen from a spectral point of view, as in the seminal article by Kac \cite{kac} dealing with the spectrum of the Laplacian on a planar domain, and the numerous works that it inspired (see for instance the surveys \cite{datchev-hezari-survey13,zelditch-survey}). Here we work in a semiclassical context, which means that the Hilbert spaces and quantum observables depend on a small parameter $\hbar$, and we are interested in the limit $\hbar \to 0$. Inverse spectral problems in this setting have been intensively studied by various authors, see for instance the recent review \cite{VN-footprints} and the references therein. Here we propose another approach, based on the observation of the action of $T$ on quantum states obtained as random combinations of pure states: from the observation of this action for a large number of realizations of the random state, can one infer some properties of $f$?

In this paper we answer this last question positively. More precisely, we are able to recover all the regular levels of $f$ from the zeros of certain random holomorphic sections of (a large power of) a holomorphic line bundle over $M$. 

\subsection{Framework}

We work in the context of geometric quantization \cite{Sou,Kos} and Berezin-Toeplitz operators \cite{Ber,BouGui,BMS,Cha03,ma-marinescu-symplectic}. This means that the phase space $M$ is a compact K\"ahler manifold and that the quantum observables are operators acting on spaces of holomorphic sections $H^0(M,L^{\otimes k})$, where $L \to M$ is a positive line bundle and $k$ is an integer; the semiclassical limit is $k \to +\infty$ (in this setting the small parameter $\hbar$ corresponds to $k^{-1}$). For each $k$, the space $H^0(M,L^{\otimes k})$ is finite-dimensional and carries a natural $L^2$ Hermitian product  $\langle \cdot, \cdot \rangle_{L^2}$ induced by the choice of a positively curved Hermitian metric $h$ on $L$. This Hermitian product is defined as 
\[ \langle\sigma,\tau\rangle_{L^2} = \int_{x\in M}h^k_x(\sigma(x),\tau(x))\frac{\omega^n}{n!} \]
for any $\sigma,\tau\in H^0(M,L^{\otimes k})$, where $\omega = i c_1(L,h)$.
 
\paragraph{Berezin-Toeplitz operators.} To any classical observable $f \in \mathscr{C}^{\infty}(M)$, one can naturally associate a sequence of operators $T_k(f): H^0(M,L^{\otimes k}) \to H^0(M,L^{\otimes k})$ as follows. Let $L^2(M,L^{\otimes k})$ be the Hilbert space obtained as the closure of $\mathscr{C}^{\infty}(M,L^{\otimes k})$ with respect to $\langle\cdot,\cdot\rangle_{L^2}$, and let $\Pi_k: L^2(M,L^{\otimes k}) \to H^0(M,L^{\otimes k})$ be the orthogonal projector from this space to the space of holomorphic sections. Then
\[ T_k(f): s \in H^0(M,L^{\otimes k}) \mapsto \Pi_k(f s) \in H^0(M,L^{\otimes k}).\]
This is an instance of Berezin-Toeplitz operator with principal symbol $f$. More generally, Berezin-Toeplitz operators are operators of the form
\[ T_k = \Pi_k f(\cdot,k) + R_k: H^0(M,L^{\otimes k}) \to H^0(M,L^{\otimes k}) \]
where $(f(\cdot,k))_{k \in \N}$ is a sequence of elements of $\mathscr{C}^{\infty}(M)$ with an asymptotic expansion of the form
\[ f(\cdot,k) = f_0 + k^{-1} f_1 + k^{-2} f_2 + \ldots \]
for the $\mathscr{C}^{\infty}$ topology, and the operator norm of $R_k$ is a $O(k^{-N})$ for every $N \in \N$. The first term $f_0$ in the asymptotic expansion of $f(\cdot,k)$ is called the \emph{principal symbol} of $T_k$.

\paragraph{Random sections and Kodaira maps.} Given a Berezin-Toeplitz operator $T_k$, we study the zeros of $T_k s_k$ where $s_k$ is a random holomorphic section of $L^{\otimes k}$ of the form
\begin{equation} s_k = \sum_{\ell = 1}^{N_k} \alpha_{\ell} e_{\ell}, \quad \alpha_{\ell} \sim \mathcal{N}_{\C}(0,1) \ \text{i.i.d.} \label{eq:rand_sec_intro} \end{equation}
where $N_k = \dim H^0(M,L^{\otimes k})$ and $(e_{\ell})_{1 \leq \ell \leq N_k}$ is any orthonormal basis of $H^0(M,L^{\otimes k})$.
 Such random zeros are related to the properties of some Kodaira maps associated with $T_k$. Before defining those, we recall some facts about the standard Kodaira maps.

 Let $e_1,\dots, e_{N_k}$ be any orthonormal basis of $H^0(M,L^{\otimes k})$. By the Kodaira Embedding Theorem we have that, for $k$ large enough, the base locus $\cap_{s\in H^0(M,L^{\otimes k})}\{s=0\}$ is empty and the Kodaira map $$\Phi_k:x\in M\mapsto [e_1(x):\dots:e_{N_k}(x)]\in\C \mathbb{P}^{N_k-1}$$ is an embedding.
The pull-back $\Phi_k^*\omega_{FS}$  of the Fubini-Study form does not depend on the choice of the orthonormal basis and  its cohomology class $[\Phi_k^*\omega_{FS}]$ equals $k[\omega]$. It is then natural to compare  the forms $\frac{1}{k}\Phi_k^*\omega_{FS}$ and $\omega$ with each other. Tian's asymptotic isometry theorem \cite{bouche,tian,zelditch} says that the former  converges to the latter in the $\mathscr{C}^{\infty}$ topology, as $k\rightarrow + \infty$. More precisely, for any $m\in\mathbb{N}$, we have 
\[ \norm{\frac{1}{k}\Phi_k^*\omega_{FS}-\omega}_{\mathscr{C}^{m}}=O(k^{-1}). \]
Using this result, Shiffman and Zelditch \cite{shiffman-zelditch}  proved that the expected normalized current of integration $Z_s$ of a random section $s\in H^0(M,L^{\otimes k})$ converges weakly in the sense of currents to the K\"ahler form. In this paper, we will give similar results about Kodaira maps and zeros of random sections twisted by Berezin-Toeplitz operators.

\subsection{Main results}\label{sec:main_results}
\subsubsection{(Non)-convergence of the Fubini-Study forms}

Let $e_1,\dots,e_{N_k}$ be any orthonormal basis of $H^0(M,L^{\otimes k})$, and let $T_k$ be a Berezin-Toeplitz operator with principal symbol $f \in \mathscr{C}^{\infty}(M,\R)$. We consider the following ``twisted'' Kodaira map:
\begin{equation} \Phi_{T_k}: M \dashrightarrow \C \mathbb{P}^{N_k-1}, \quad x \mapsto [(T_k e_1)(x): \dots :(T_k e_{N_k})(x)], \label{eq:kodaira}\end{equation}
 which is well-defined outside the locus $\bigcap_{s\in H^0(M,L^{\otimes k})}\{T_k s=0\}$. The first goal of the paper is to give a natural sufficient condition on $f$ for which the map $\Phi_{T_k}$ is everywhere well-defined.

\begin{thm}
\label{embedding}
Assume that the principal symbol $f$ of $T_k$ is a smooth, real-valued function which vanishes transversally. Then, for $k$ large enough, the map $\Phi_{T_k}$ is well-defined on the whole $M$, that is $\bigcap_{s\in H^0(M,L^{\otimes k})}\{T_k s=0\} = \emptyset$. 
\end{thm}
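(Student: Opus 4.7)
Fix $x \in M$. Since $(T_k s)(x) = \langle s, T_k^* e_{x,k}\rangle_{L^2}$, where $e_{x,k} \in H^0(M,L^{\otimes k})$ denotes the coherent state at $x$ (characterized by $\langle s, e_{x,k}\rangle_{L^2} = s(x)$ for every $s$), the condition $x \in \bigcap_{s} \{T_k s = 0\}$ is equivalent to $T_k^* e_{x,k} = 0$. It thus suffices to exhibit, for each $x \in M$, a section $s_x \in H^0(M,L^{\otimes k})$ with $(T_k s_x)(x) \neq 0$, uniformly in $x$ for $k$ large. The plan is to take $s_x = e_{y_x,k}$ for a suitable $y_x$ close to $x$, and to evaluate $(T_k e_{y_x,k})(x) = \langle T_k e_{y_x,k}, e_{x,k}\rangle_{L^2}$ via Bergman kernel asymptotics.

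\medskip

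If $f(x) \neq 0$, we take $y_x = x$. Writing $T_k = \Pi_k f(\cdot,k) + R_k$ with $R_k$ smoothing, and using $\Pi_k e_{x,k} = e_{x,k}$, the standard Berezin covariant symbol expansion gives
\[
(T_k e_{x,k})(x) = \int_M f(z)\,|e_{x,k}(z)|^2_{h^k}\, \frac{\omega^n}{n!}(z) + O(k^{-\infty}) = f(x)\, B_k(x) + O(k^{n-1}),
\]
where $B_k(x) = \Pi_k(x,x) \sim (k/2\pi)^n$ is the Bergman density; this is nonzero for $k$ large.

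\medskip

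If $f(x) = 0$, then by transversality $df(x) \neq 0$, equivalently $\partial f(x) \neq 0$ (as $f$ is real). We pick $v \in T_x^{(1,0)} M$ with $\partial f(x)(v) \neq 0$ and set $y_x = \exp_x(v/\sqrt{k})$, so that $d(x,y_x) \sim k^{-1/2}$. Writing $(T_k e_{y_x,k})(x)$ as the integral over $M$ of $f$ against the product of two Bergman kernels, Taylor-expanding $f(z) = \partial f(x)(z-x) + \overline{\partial f(x)(z-x)} + O(|z-x|^2)$ around $x$, and applying the standard near-diagonal Bergman kernel asymptotics in K\"ahler normal coordinates (passing to the local Bargmann model on $\mathbb{C}^n$), we find that the leading contribution is of the form
\[
\overline{\partial f(x)(y_x - x)} \cdot \Pi_k(x, y_x) \sim k^{n-1/2}\, \overline{\partial f(x)(v)},
\]
which is nonzero by the choice of $v$ (noting that $|\Pi_k(x,y_x)| \sim k^n$ in this mesoscopic regime); the corrections of order $k^{n-1}$ or smaller (coming from the quadratic remainder of $f$ and from $T_k - \Pi_k f$) do not spoil this.

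\medskip

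The main technical obstacle is the precise off-diagonal Bergman kernel analysis at the mesoscopic scale $k^{-1/2}$ needed to extract the leading term above; in particular one must check that the naive $O(k^{-1/2})$ relative error from replacing $T_k$ by its principal part is in fact smaller than the main term. Uniformity in $x$ follows from compactness of $M$: we cover $M = \{|f| \geq c_1\} \cup U$, where $U$ is a neighborhood of $\{f = 0\}$ on which $|df| \geq c_2 > 0$ by transversality and compactness; on each piece, one of the two computations above applies with uniform constants.
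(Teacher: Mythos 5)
Your starting point (the equivalence between $x\in\bigcap_s\{T_ks=0\}$ and $T_k^*e_{x,k}=0$) is correct and in fact closely related to what the paper does: by Lemma \ref{sumbasis2}, the quantity $\sum_\ell|(T_ke_\ell)(x)|^2=\|T_k^*e_{x,k}\|^2$ is precisely the on-diagonal value $B_k(x,x)$ of the Schwartz kernel of $B_k:=T_k^*T_k$. Where the two arguments diverge is in how this is controlled. The paper never leaves the diagonal: Corollary \ref{secondterm}, which rests on the subprincipal on-diagonal expansion of Theorem \ref{symbol_comp}, gives
\[ B_k(x,x)=\left(\tfrac{k}{2\pi}\right)^n\Bigl(f(x)^2+k^{-1}b_1(x)+O(k^{-2})\Bigr),\qquad b_1=\tfrac12|\dd f|^2_{\omega}\ \text{ on } f^{-1}(0), \]
and transversality then yields the uniform lower bound $B_k(x,x)\geq ck^{n-1}$ (Corollary \ref{positivity}). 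The compensating contribution on $f^{-1}(0)$ appears directly in the subleading coefficient $b_1$; no off-diagonal Bergman kernel analysis is needed.

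Your argument instead tries to exhibit a witness section with $(T_ks_x)(x)\neq 0$, using a coherent state at mesoscopic distance $k^{-1/2}$ when $f(x)=0$. The heuristic is right: in the Bargmann model the holomorphic linear term $\partial f(x)(z-x)$ integrates to zero against $\Pi_k(x,\cdot)\Pi_k(\cdot,y_x)$, the antiholomorphic term survives and gives the order $k^{n-1/2}$, and the quadratic remainder of $f$ together with the subprincipal part of $T_k$ contribute only $O(k^{n-1})$. But this is exactly the step you yourself flag as ``the main technical obstacle,'' and it is not carried out: one needs a near-diagonal expansion of $\Pi_k$ at distance $\sim k^{-1/2}$ with uniform remainder control, the Gaussian-moment computation made rigorous, and uniformity of all constants over a compact neighborhood of $f^{-1}(0)$. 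As written this is a correct plan rather than a proof, and the required off-diagonal asymptotics are at least as heavy as the Theorem \ref{symbol_comp} input they would replace. The approach is also strictly weaker even if completed: exhibiting a single witness section gives only non-vanishing, whereas the paper's route yields the quantitative two-sided control of $B_k(x,x)$ that is reused throughout Theorems \ref{convergenceofcurrents}--\ref{nonconvergence}. (Your case $f(x)\neq 0$ is entirely subsumed by the paper's computation, since there $B_k(x,x)\sim(k/2\pi)^nf(x)^2$; the genuinely new content in your argument is exactly the $f(x)=0$ case, which is the part left as a sketch.)
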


As a  consequence of Theorem \ref{embedding} we have that the pull-back of the Fubini-Study form $\omega_{FS}$ is a \emph{smooth} form defined on the whole $M$ (rather than  just a current). The pull-backed forms $\Phi_{T_k}^*\omega_{FS}$ are usually also called Fubini-Study forms. The cohomology class of $\Phi_{T_k}^*\omega_{FS}$ equals $k[\omega]$; 
it is then natural to ask about the convergence of the sequence of smooth forms $\left( \frac{1}{k}\Phi_{T_k}^*\omega_{FS} \right)_{k \in \N}$. The following theorem deals with the weak convergence of the normalized Fubini-Study forms.

\begin{thm}
\label{convergenceofcurrents}
Assume that the principal symbol $f \in \mathscr{C}^{\infty}(M, \R)$ of $T_k$ is a smooth function vanishing transversally. The sequence of smooth forms $\frac{1}{k}\Phi_{T_k}^*\omega_{FS}$ converges to $\omega$ weakly in the sense of currents. 
\end{thm}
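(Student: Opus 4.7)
The plan is to reduce the weak convergence in currents to a uniform size estimate on the globally defined non-negative function
$$u_k(x) := \sum_{\ell=1}^{N_k} |(T_k e_\ell)(x)|^2_{h^k},$$
which, by Theorem~\ref{embedding}, is strictly positive on $M$ for $k$ large. Working in a local holomorphic frame $e_L$ of $L$ with K\"ahler potential $\phi$ (so that $h(e_L,e_L)=e^{-\phi}$) and writing $T_ke_\ell=f_\ell\,e_L^{\otimes k}$, the pullback $\Phi_{T_k}^*\omega_{FS}=\frac{i}{2\pi}\partial\bar\partial\log\sum_\ell|f_\ell|^2$ decomposes, up to the normalization implicit in $\omega=ic_1(L,h)$, as
$$\Phi_{T_k}^*\omega_{FS} = k\,\omega + \frac{i}{2\pi}\partial\bar\partial\log u_k,$$
so that $\frac{1}{k}\Phi_{T_k}^*\omega_{FS}-\omega$ depends only on the intrinsic positive function $u_k$. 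Pairing with a smooth test $(n-1,n-1)$-form $\eta$ and integrating by parts on the compact manifold $M$, the weak convergence reduces to proving $\frac{1}{k}\log u_k\to 0$ in $L^1(M)$.

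The heart of the matter is then the two-sided polynomial estimate
$$c\,k^{n-1} \leq u_k(x) \leq C\,k^n \qquad (x\in M,\ k\gg 1).$$
The upper bound is immediate: $\sqrt{u_k(x)}$ equals the operator norm of the rank-one evaluation $\mathrm{ev}_x\circ T_k:H^0(M,L^{\otimes k})\to L_x^{\otimes k}$, hence $u_k(x)\leq B_k(x)\|T_k\|_{\mathrm{op}}^2$, and the classical Bergman expansion $B_k=k^n+O(k^{n-1})$ combined with $\|T_k\|_{\mathrm{op}}=O(1)$ yields $u_k\leq Ck^n$. For the lower bound, I would use the identity $u_k(x)=\|T_k^*\delta_{k,x}\|^2=\langle T_kT_k^*\delta_{k,x},\delta_{k,x}\rangle$, where $\delta_{k,x}$ is the coherent state at $x$ (of squared norm $B_k(x)$), and apply the Berezin-Toeplitz asymptotic expansion to $T_kT_k^*$ (which is itself a Berezin-Toeplitz operator with real principal symbol $f^2$) acting on coherent states. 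This gives
$$\frac{u_k(x)}{B_k(x)} = f(x)^2 + k^{-1}\alpha(x) + O(k^{-2}),$$
with $\alpha\in\mathscr{C}^{\infty}(M)$ such that $\alpha(x)>0$ whenever $f(x)=0$, thanks to the transversal vanishing of $f$. Splitting $M=\{|f|\geq\epsilon\}\cup\{|f|<\epsilon\}$ for $\epsilon>0$ small produces $u_k\gtrsim k^n$ on the first piece via the leading term and $u_k\gtrsim k^{n-1}$ on the second via the subprincipal term.

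Combining these estimates gives $|\log u_k|\leq C\log k$ uniformly on $M$, hence $\|\frac{1}{k}\log u_k\|_{L^\infty(M)}=O((\log k)/k)\to 0$, much more than enough for the required $L^1$-convergence. The main obstacle I anticipate is precisely the quantitative lower bound on $u_k$ in a neighborhood of $\{f=0\}$, where the leading term $k^n f^2$ degenerates and one must extract the positive subprincipal contribution of the Berezin-Toeplitz expansion. This is exactly where the transversality hypothesis on $f$ enters the argument; in spirit it is the same input as in Theorem~\ref{embedding}, used here in a uniform, quantitative form.
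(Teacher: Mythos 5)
Your proposal is correct and follows essentially the same route as the paper: your $u_k$ is exactly the diagonal Schwartz kernel $B_k(x,x)$ of $T_k^*T_k$ (Lemma \ref{sumbasis2}), the identity $\Phi_{T_k}^*\omega_{FS}-k\omega=i\partial\bar\partial\log B_k$ is the paper's Equation \eqref{equalofforms}, and your two-sided bound $ck^{n-1}\leq u_k\leq Ck^n$ (with the lower bound near $f^{-1}(0)$ extracted from the positive subprincipal term $\tfrac12|\dd f|_\omega^2$ via transversality) is precisely Corollaries \ref{secondterm} and \ref{positivity}, after which both arguments conclude by integration by parts and the $O((\log k)/k)$ estimate. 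The only differences are cosmetic (coherent states versus the kernel expansion, and a harmless normalization constant in front of $\omega_{FS}$).
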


The next result estimates the error term $\frac{1}{k}\Phi_{T_k}^*\omega_{FS} - \omega$, which explicitly involves $f$.

\begin{thm}
\label{thm:error_term}
Let $f \in \mathscr{C}^{\infty}(M, \R)$ be a smooth function vanishing transversally, and let $T_k$ be a Berezin-Toeplitz operator with principal symbol $f$. Then $\log f^2$ is locally integrable and
\[ \Phi_{T_k}^*\omega_{FS} - k \omega \underset{k \to +\infty}{\longrightarrow} i \partial \bar{\partial} \log f^2 \]
in the sense of currents.
\end{thm}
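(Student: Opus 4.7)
The plan is to write the error $\Phi_{T_k}^*\omega_{FS} - k\omega$ as $i\partial\bar\partial$ of a Bergman-type function and analyze its asymptotics near and away from $\{f=0\}$.

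\textbf{Exact representation.} In a local trivialization of $L$ where $|e|^2_h = e^{-\phi}$, so $\omega = i\partial\bar\partial\phi$ locally, write $T_ke_\ell = \widetilde{T_k e_\ell}\, e^{\otimes k}$. A direct computation gives
\[ \Phi_{T_k}^*\omega_{FS} = i\partial\bar\partial \log \sum_\ell |\widetilde{T_k e_\ell}|^2 = k\omega + i\partial\bar\partial \log B_k^{T_k}, \]
where $B_k^{T_k}(x) := \sum_{\ell=1}^{N_k} |(T_k e_\ell)(x)|^2_{h^k}$. By Theorem \ref{embedding}, $B_k^{T_k}$ is smooth and strictly positive on $M$ for $k$ large enough, so $\log B_k^{T_k}$ is a smooth global function and the identity above is one of smooth forms.

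\textbf{On-diagonal asymptotic.} One identifies $B_k^{T_k}(x)$ with the Schwartz kernel of $T_k T_k^*$ evaluated on the diagonal. Since $f$ is real, $T_k^*$ is Berezin--Toeplitz with principal symbol $f$ as well, and the product theorem yields that $T_k T_k^*$ is Berezin--Toeplitz with principal symbol $f^2$. The standard diagonal expansion then gives, uniformly on $M$,
\[ B_k^{T_k}(x) = c_n k^n \bigl( f(x)^2 + k^{-1} a(x) + O(k^{-2}) \bigr) \]
for some smooth $a$ and dimensional constant $c_n$. Positivity of $B_k^{T_k}$ forces the parenthesis to be positive on all of $M$; in particular $a \geq 0$ on $\{f = 0\}$.

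\textbf{Passage to the limit.} Since $i\partial\bar\partial$ annihilates the additive constant $n\log(c_n k)$, it is enough to prove that
\[ u_k := \log\bigl( f^2 + k^{-1} a + O(k^{-2}) \bigr) \underset{k \to +\infty}{\longrightarrow} \log f^2 \quad \text{in } L^1_{\mathrm{loc}}(M), \]
and to conclude by continuity of $i\partial\bar\partial$ on distributions. That $\log f^2$ is locally integrable follows from transverse vanishing: the smooth hypersurface $\{f=0\}$ is such that $|f|$ is comparable to the distance to it, whose logarithm is locally integrable. Away from $\{f=0\}$ one has $u_k - \log f^2 = O(k^{-1})$ uniformly on compacts. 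Near a point of the zero set, choose local coordinates in which $f$ becomes a coordinate $u_1$ and split the integral of $|u_k - \log f^2|$ over $\{|u_1| \geq \delta\}$ and $\{|u_1| < \delta\}$: the first piece vanishes as $k \to \infty$ for fixed $\delta$, while the second is uniformly bounded by a quantity tending to $0$ as $\delta \to 0$, using $|\log(u_1^2 + \varepsilon)| \leq C + |\log u_1^2|$ together with local integrability of $\log u_1^2$.

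\textbf{Main obstacle.} The essential input is the uniform on-diagonal expansion of $B_k^{T_k}$ with a controlled remainder; once this is in hand, the nonnegativity of the subleading term $a$ on $\{f=0\}$ is forced by Theorem \ref{embedding}, and the $L^1_{\mathrm{loc}}$ convergence reduces to a logarithmic integral estimate whose very feasibility rests on the transverse vanishing hypothesis.
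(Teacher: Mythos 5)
Your overall strategy matches the paper's: write $\Phi_{T_k}^*\omega_{FS} - k\omega = i\partial\bar\partial\log B_k$ with $B_k$ the on-diagonal kernel of a Berezin--Toeplitz product, expand $B_k$ uniformly, and then pass to the limit in $L^1_{\mathrm{loc}}$ via a logarithmic domination near $\{f=0\}$. (A small slip: $\sum_\ell |T_k e_\ell|^2$ is the diagonal kernel of $T_k^* T_k$, not $T_k T_k^*$; for $f$ real the expansion is the same, so this is harmless.)

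There is, however, a genuine gap at the heart of the domination step. You claim that Theorem \ref{embedding} forces only $a\geq 0$ on $\{f=0\}$, and that this is enough to run the estimate $|\log(u_1^2+\varepsilon)|\leq C+|\log u_1^2|$. It is not. Writing $\varepsilon = k^{-1}a + O(k^{-2})$, nonnegativity of $a$ \emph{on the zero set} does not prevent $\varepsilon$ from being negative near but off the zero set, nor does mere positivity of $B_k$ for each fixed $k$ (which is all Theorem \ref{embedding} gives) furnish a $k$-uniform lower bound; $u_1^2+\varepsilon$ could dip arbitrarily close to $0$ relative to $u_1^2$, and then $|\log(u_1^2+\varepsilon)|$ is \emph{not} dominated by $C+|\log u_1^2|$. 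What is actually needed is the uniform estimate $f^2 + k^{-1}b_1 + O(k^{-2}) \geq c\,k^{-1}$ (Corollary \ref{positivity2} of the paper), which rests on the \emph{strict} positivity $b_1 = \tfrac12|\mathrm{d}f|^2_\omega>0$ on $f^{-1}(0)$ and, by continuity, on a tubular neighborhood. This in turn requires the explicit sub-leading symbol computation of Corollary \ref{secondterm} (ultimately Theorem \ref{symbol_comp}) combined with the transversality hypothesis. In other words, transverse vanishing enters your argument twice, not once: it makes $\log f^2$ locally integrable, \emph{and} it makes the $k^{-1}$ term in the kernel expansion strictly positive where $f$ vanishes. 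You use the first but not the second, and the second is exactly what makes the logarithmic domination legitimate. The paper's version of the estimate is actually a little cleaner once this positivity is in hand: on a small tubular neighborhood $\Sigma_\epsilon$ of $f^{-1}(0)$, one has $0 < f^2 < f^2 + k^{-1}b_1 < 1$ for $k$ large, hence directly $|\log(f^2+k^{-1}b_1)| \leq |\log f^2|$, and dominated convergence applies with no $\delta$-splitting.
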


 The following theorem shows that we cannot expect better than the convergence in the sense of currents as soon as $ f^{-1}(0) \neq\emptyset$. This shows a striking difference with the Fubini-Study forms associated with the standard Kodaira maps. As recalled in Section \ref{framework}, the K\"ahler form $\omega$ induces a Riemannian metric on $M$, which by duality induces a metric on $T^*M$, that we denote by $|\cdot|_{\omega}$. 

\begin{thm}
\label{nonconvergence}
Let $f \in \mathscr{C}^{\infty}(M, \R)$ be a smooth function vanishing transversally, and let $T_k$ be a Berezin-Toeplitz operator with principal symbol $f$. Then the sequence $\frac{1}{k}\Phi_{T_k}^*\omega_{FS}$ converges to $\omega$ locally uniformly on  $M\setminus  f^{-1}(0)$ in the $\mathscr{C}^{\infty}$ norm. However, $\frac{1}{k}\Phi_{T_k}^*\omega_{FS}$ does \textit{not} converge to $\omega$ in the $\mathscr{C}^0$-topology (and even pointwise) on $ f^{-1}(0)$. More precisely, 
\[ \left(\frac{1}{k}\Phi_{T_k}^*\omega_{FS}\right)_x - \omega_x \underset{k \to + \infty}{\longrightarrow} \begin{cases} 0 \text{ if } f(x) \neq 0, \\[2mm]  \frac{4 i ( \partial f \wedge \bar{\partial} f )_x}{|\dd f(x)|_{\omega}^2} \text{ if } f(x) = 0. \end{cases} \]
\end{thm}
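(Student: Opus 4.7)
The plan is to reduce the theorem to asymptotics of a ``twisted Bergman kernel'' on the diagonal. In a local holomorphic frame $e_L$ of $L$ with K\"ahler potential $\phi$ (so $\omega = i\partial\bar\partial\phi$ locally), writing $e_\ell = \tilde{e}_\ell\,e_L^{\otimes k}$, a direct computation gives
\[ \frac{1}{k}\Phi_{T_k}^*\omega_{FS} - \omega \;=\; \frac{i}{k}\partial\bar\partial \log \tilde{B}_k, \qquad \tilde{B}_k(x) \;=\; \sum_{\ell=1}^{N_k} |(T_k e_\ell)(x)|^2_{h^k}, \]
and Theorem~\ref{embedding} guarantees that $\tilde{B}_k$ is smooth and strictly positive on $M$. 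Since $\tilde{B}_k(x)$ is the diagonal of the Schwartz kernel of the Berezin--Toeplitz operator $T_k T_k^*$, whose principal symbol is $f^2$, the standard diagonal Bergman kernel expansion for Berezin--Toeplitz operators (Tian--Catlin--Zelditch and Ma--Marinescu) yields, in the $\mathscr{C}^\infty$ topology on $M$,
\[ \tilde{B}_k(x) \;=\; \frac{k^n}{(2\pi)^n}\bigl(f(x)^2 + k^{-1} g(x) + O(k^{-2})\bigr) \]
for some smooth function $g$ on $M$ whose value at zeros of $f$ I shall identify.

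On any open set $U$ with $f^2 \geq \varepsilon > 0$, the logarithm can be expanded termwise, yielding $\log \tilde{B}_k = n \log(k/(2\pi)) + \log f^2 + O(k^{-1})$ in $\mathscr{C}^\infty(U)$. Applying $\tfrac{i}{k}\partial\bar\partial$ then gives locally uniform $\mathscr{C}^\infty$ convergence of $\tfrac{1}{k}\Phi_{T_k}^*\omega_{FS}$ to $\omega$ on $M \setminus f^{-1}(0)$. For the pointwise limit at $x_0 \in f^{-1}(0)$, set $P_k = f^2 + k^{-1} g + O(k^{-2})$. Using $f(x_0)=0$, I obtain $P_k(x_0) = k^{-1} g(x_0) + O(k^{-2})$, $\partial P_k(x_0) = O(k^{-1})$ and $\partial\bar\partial P_k(x_0) = 2\partial f(x_0) \wedge \bar\partial f(x_0) + O(k^{-1})$. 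Substituting into the identity $\partial\bar\partial \log P_k = (\partial\bar\partial P_k)/P_k - (\partial P_k \wedge \bar\partial P_k)/P_k^2$ gives
\[ \frac{i}{k}\partial\bar\partial \log \tilde{B}_k(x_0) \;=\; \frac{2\,i\,\partial f(x_0) \wedge \bar\partial f(x_0)}{g(x_0)} + O(k^{-1}). \]

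The main obstacle is the explicit identification $g(x_0) = \tfrac{1}{2}|df(x_0)|_\omega^2$ whenever $f(x_0) = 0$. To handle it, I would combine the composition formula $T_k(f)\,T_k(f) = T_k\bigl(f^2 + k^{-1}C_1(f,f) + O(k^{-2})\bigr)$ for Berezin--Toeplitz operators with the sub-leading coefficient of the diagonal Bergman kernel expansion for the Toeplitz operator of symbol $f^2$. This writes $g$ as a sum of contributions involving derivatives of $f$ up to order two, together with K\"ahler curvature terms; at $x_0$ every summand carrying an explicit factor of $f$ vanishes, leaving only terms proportional to $|\partial f(x_0)|_\omega^2$ (morally stemming from $\Delta(f^2)(x_0) = 2|df(x_0)|_\omega^2$). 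Careful bookkeeping of the constants in these two expansions then gives $g(x_0) = \tfrac{1}{2}|df(x_0)|_\omega^2$, and plugging this value into the previous display yields the stated limit $\tfrac{4\,i\,\partial f \wedge \bar\partial f}{|df(x_0)|_\omega^2}$.
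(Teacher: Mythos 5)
Your argument is essentially the paper's own proof: both reduce the statement to the identity $\frac{1}{k}\Phi_{T_k}^*\omega_{FS}-\omega = \frac{i}{k}\partial\bar\partial\log B_k$ with $B_k$ the on-diagonal Schwartz kernel of $T_k^*T_k$, use the two-term expansion $B_k=(k/2\pi)^n\bigl(f^2+k^{-1}b_1+O(k^{-2})\bigr)$ in the $\mathscr{C}^\infty$ topology, and obtain the pointwise limit at a zero of $f$ from $b_1|_{f^{-1}(0)}=\tfrac12|\dd f|_\omega^2$, which the paper derives from the Berezin--Toeplitz composition formula (Theorem \ref{symbol_comp} and Corollary \ref{secondterm}) exactly along the lines you sketch. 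The only slip is that $\sum_\ell|(T_ke_\ell)(x)|^2_{h^k}$ is the diagonal kernel of $T_k^*T_k$ rather than $T_kT_k^*$ (Lemma \ref{sumbasis2}), which is harmless here since both have principal symbol $f^2$ and the same subprincipal value $\tfrac12|\dd f|_\omega^2$ on $f^{-1}(0)$.
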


\subsubsection{Fubini-Study forms at Planck scale}
\label{subsec:planck}

Theorem \ref{nonconvergence} shows that the zero locus of $f$ plays a fundamental role in the non-convergence of the Fubini-Study forms $\frac{1}{k}\Phi_{T_k}^*\omega_{FS}$ to $\omega$ as  differential forms. Indeed the difference $\left(\frac{1}{k}\Phi_{T_k}^*\omega_{FS}\right)_x - \omega_x$ exhibits two very different behaviors on and outside $f^{-1}(0)$. In order to further study these two regimes, a natural idea is to work on a smaller scale which allows us to localize around any given point. We show that the scale $k^{-\frac{1}{2}}$ (that we call Planck scale in the rest of the paper) is well-adapted to this problem. Remark that the Planck scale $k^{-\frac{1}{2}}$ is natural in both quantum mechanics and K\"ahler geometry (it is the scale at which the Bergman kernel displays its universality \cite{bsz,daima,mamabook}).
At this scale, we are able to produce precise asymptotics for the difference $\frac{1}{k}\Phi_{T_k}^*\omega_{FS} - \omega$  in the sense of currents. This is the content of Theorem \ref{thm:error_term_in_k} (for the behavior on $f^{-1}(0)$) and Theorem \ref{thm:error_term_in_k_2} (for the behavior outside $f^{-1}(0)$). Let $n = \dim_{\C} M$.

\begin{thm}
\label{thm:error_term_in_k}
Let $f\in\mathscr{C}^{\infty}(M,\R)$ be a smooth function vanishing transversally, and let $T_k$ be a Berezin-Toeplitz operator with principal symbol $f$. Let $\varphi$ be a smooth $(n-1,n-1)$-form on $M$.  Then, for any $x\in  f^{-1}(0)$ we have
\[ \int_{B(x,\frac{R}{\sqrt{k}})} \left( \Phi_{T_k}^*\omega_{FS} - k \omega \right) \wedge \varphi = k^{-n+1} \frac{2 F_\varphi(x)}{|\dd f(x)|_{\omega}^2} C_n(R)  + O(k^{-n+\frac{1}{2}}).  \]
Here $B(x,\frac{R}{\sqrt{k}})$ is the geodesic ball of radius $\frac{R}{\sqrt{k}}$ around $x$,  $F_{\varphi}$ is the function defined as 
\[ i \partial f\wedge\bar{\partial}f \wedge \varphi = F_{\varphi}\frac{\omega^n}{n!}\]
and $C_n(R)$ is a positive (and explicit) \textit{universal} constant, only depending on $R$ and $n$.
\end{thm}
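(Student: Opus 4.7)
The plan is to start from the identity $\Phi_{T_k}^*\omega_{FS} - k\omega = i\partial\bar{\partial}\log K_k$, where $K_k(y) := \sum_{\ell} |T_k e_{\ell}(y)|^2_{h^k}$ is the diagonal of the Schwartz kernel of $T_k T_k^*$, and then analyze $\log K_k$ on Planck-scale neighborhoods of $x$. Since $T_k T_k^*$ is a Toeplitz operator with principal symbol $f^2$, the Berezin--Toeplitz star product of $f$ with itself contributes a subprincipal term proportional to $|\partial f|_{\omega}^2$; combined with the subleading coefficient of the on-diagonal Toeplitz kernel expansion (a Laplacian acting on $f^2$, equal to $2|\dd f|_{\omega}^2$ at zeros of $f$), standard results give, uniformly in $y$ with uniform derivative bounds,
\[ K_k(y) = \Bigl(\frac{k}{2\pi}\Bigr)^n \Bigl[ f(y)^2 + k^{-1} \alpha(y) + O(k^{-2}) \Bigr], \]
where $\alpha$ is smooth and satisfies $\alpha(x) = c\,|\dd f(x)|_{\omega}^2$ at every $x \in f^{-1}(0)$ for an explicit positive universal constant $c$; positivity is also forced by Theorem \ref{embedding}.

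Next, I introduce the rescaled variable $u = \sqrt{k}(y - x)$ in a K\"ahler normal chart at $x$. Transverse vanishing of $f$ yields $f(x + u/\sqrt{k}) = k^{-1/2} L(u) + O(k^{-1})$ with $L(u) = 2\,\Re(\partial f(x) \cdot u)$, and substituting into the expansion of $K_k$ produces
\[ K_k(x + u/\sqrt{k}) = \frac{k^{n-1}}{(2\pi)^n}\bigl[ L(u)^2 + \alpha(x) + O(k^{-1/2}) \bigr] \quad \text{uniformly on } \{|u| \leq R\}, \]
with uniform bounds on two derivatives. Since $\alpha(x) > 0$, I may take logarithms and apply $i\partial_u\bar{\partial}_u$ safely; the $k$-dependent additive constants disappear, and a direct computation with the holomorphic coordinate $w := \partial f(x) \cdot u$, $L = w + \bar w$, yields
\[ i\,\partial_u\bar{\partial}_u \log\bigl[L(u)^2 + \alpha(x)\bigr] = 2\,\frac{\alpha(x) - L(u)^2}{\bigl(L(u)^2 + \alpha(x)\bigr)^2}\; i\,\partial f(x) \wedge \bar{\partial}f(x), \]
so that $i\partial\bar{\partial}\log K_k$, expressed in the $u$-coordinates, equals the right-hand side up to an error $O(k^{-1/2})$.

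To pass to the integral, the change of variables introduces a factor $k^{-(n-1)}$ from the scaling of the $(n-1,n-1)$-form $\varphi$ in the $u$-chart; moreover $\varphi(x + u/\sqrt k) = \varphi(x) + O(k^{-1/2})$, and the geodesic ball becomes $B(0, R)$ up to $O(k^{-1/2})$ boundary deformation. The defining relation $i\partial f \wedge \bar{\partial}f \wedge \varphi = F_\varphi\,\omega^n/n!$ evaluated at $x$ replaces the wedge $i\partial f(x)\wedge\bar{\partial}f(x)\wedge\varphi(x)$ by $F_\varphi(x)\,dV_0$, giving a leading contribution
\[ k^{-(n-1)}\cdot 2\,F_\varphi(x) \int_{B(0,R)} \frac{\alpha(x) - L(u)^2}{\bigl(L(u)^2 + \alpha(x)\bigr)^2}\, dV_0(u). \]
Choosing unitary coordinates aligning $\partial f(x)$ with $du_1$ yields $L(u) = 2\lambda\,\Re(u_1)$ with $\lambda^2 = |\partial f(x)|_{\omega}^2 \propto |\dd f(x)|_{\omega}^2$ and $\alpha(x) \propto \lambda^2$; the integrand then splits as $\lambda^{-2}$ times a function of $s := \Re(u_1)$ and universal constants. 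Performing the integral over the orthogonal $(2n-1)$ directions at fixed $s$ (which produces the weight $(R^2 - s^2)^{(2n-1)/2}$) reduces the computation to a single $s$-integral; factoring out $\lambda^{-2} \propto |\dd f(x)|_{\omega}^{-2}$ leaves the universal positive constant $C_n(R)$, depending only on $R$ and $n$. Positivity is visible after integrating by parts with the primitive $\frac{s}{2s^2+c}$ of $\frac{c - 2s^2}{(2s^2+c)^2}$, which reduces the $s$-integral to one with a manifestly positive integrand.

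The main obstacle is the precise identification of the subleading coefficient $\alpha$ at zeros of $f$: the two contributions (from the Berezin--Toeplitz composition of $T_k(f)$ with itself and from the subleading on-diagonal Toeplitz expansion) must combine into a positive multiple of $|\dd f|_{\omega}^2$, which requires assembling the standard asymptotic formulas carefully. A secondary, routine, point is the uniformity of the Toeplitz kernel asymptotic with two derivatives on the Planck-scale ball, which follows from the off-diagonal Bergman kernel expansion in K\"ahler normal coordinates and the composition formula for Toeplitz operators.
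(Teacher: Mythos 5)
Your argument follows the paper's route: express $\Phi_{T_k}^*\omfs - k\omega = i\partial\bar\partial\log$ of the on-diagonal kernel of $T_k^*T_k$ (you wrote $T_kT_k^*$, but $\sum_\ell |T_k e_\ell|^2$ is the diagonal of $T_k^*T_k$; for real $f$ the two have the same subleading coefficient, so this slip is harmless), use the BT composition formula to identify the subprincipal coefficient as $\tfrac12|\dd f|_\omega^2$ on $f^{-1}(0)$, rescale by $\sqrt k$ in a normal chart, linearize $f$, and reduce to a universal Euclidean integral of $\frac{c - L^2}{(L^2 + c)^2}$ over the ball. Your variations --- a slab decomposition in place of the paper's spherical/hypergeometric evaluation, and integration by parts to see positivity instead of the closed-form expression --- are minor and do not constitute a different method.
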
 

The constant $C_n(R)$ in this statement is computed in Proposition \ref{prop:calcul_integrale} and equals
\begin{equation} C_n(R) = \frac{2^n \pi^n (n-1)!}{(2n-2)!} \left(  \sum_{\ell = 0}^{n-1} \binom{n - \frac{3}{2}}{\ell} 2^{\ell} R^{2 \ell} - (1 + 2 R^2)^{n - \frac{3}{2}} \right) \label{eq:Cn(R)} \end{equation}
with $\binom{\alpha}{\ell}  = \frac{\alpha (\alpha - 1) \ldots (\alpha - \ell + 1)}{\ell!}$ for $\alpha \in \R$, $\ell \in \N_{>0}$ and $\binom{\alpha}{0} = 1$. In particular, 
\[ C_1(R) = 2\pi \left( 1 - \frac{1}{\sqrt{1 + 2 R^2}} \right). \]

As can be seen in the course of the proof of Proposition \ref{prop:calcul_integrale}, the constant $C_n(R)$ can also be expressed in terms of hypergeometric functions. This seems to reflect some arithmetic flavor that is a priori surprising (at least for the authors).

Note that when $\varphi =  \frac{\omega^{n-1}}{(n-1)!}$ the first order term in the expansion of Theorem \ref{thm:error_term_in_k} is universal (it depends neither on $f$ nor on $X$, but only on $n$ and $R$). More precisely:

\begin{cor}
\label{cor:univ}
Let $f\in\mathscr{C}^{\infty}(M,\R)$ be a smooth function vanishing transversally, and let $T_k$ be a Berezin-Toeplitz operator with principal symbol $f$. Then, for any $x\in  f^{-1}(0)$ we have
\[ \int_{B(x,\frac{R}{\sqrt{k}})} \left( \Phi_{T_k}^*\omega_{FS} - k \omega \right) \wedge \frac{\omega^{n-1}}{(n-1)!} =  k^{-n+1}  C_n(R)  + O(k^{-n+\frac{1}{2}})  \]
where $C_n(R)$ is as in Equation \eqref{eq:Cn(R)}.
\end{cor}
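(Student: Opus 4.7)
The plan is to deduce Corollary \ref{cor:univ} directly from Theorem \ref{thm:error_term_in_k} by evaluating the function $F_{\varphi}$ in the special case $\varphi=\frac{\omega^{n-1}}{(n-1)!}$. Specifically, I aim to show that for this choice of $\varphi$ one has
\[ F_{\varphi}(x)\;=\;\tfrac{1}{2}\,|\dd f(x)|_{\omega}^{2} \qquad\text{for every }x\in f^{-1}(0). \]
Once this is known, the prefactor $\frac{2F_{\varphi}(x)}{|\dd f(x)|_{\omega}^{2}}$ appearing in Theorem \ref{thm:error_term_in_k} collapses to $1$, and the asymptotic reduces at once to $k^{-n+1}C_{n}(R)+O(k^{-n+\frac{1}{2}})$, which is the content of the corollary. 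Note that by the transversality assumption on $f$ one has $\dd f(x)\neq 0$ on $f^{-1}(0)$, so the division by $|\dd f(x)|_{\omega}^{2}$ is legitimate there.

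The claim $F_{\varphi}=\tfrac{1}{2}|\dd f|_{\omega}^{2}$ is the pointwise K\"ahler identity
\[ 2\, i\, \partial f\wedge\bar{\partial} f\wedge\frac{\omega^{n-1}}{(n-1)!}\;=\;|\dd f|_{\omega}^{2}\,\frac{\omega^{n}}{n!}. \]
I would check it by working at a fixed $x$ in normal holomorphic coordinates $(z_{1},\dots,z_{n})$ for which $\omega_{x}=i\sum_{j}\dd z_{j}\wedge \dd\bar z_{j}$, and then performing a unitary change of tangent frame so that $\partial f(x)=a\,\dd z_{1}$ for some $a\in\C$. A direct expansion then gives $i\,\partial f\wedge\bar{\partial} f\wedge\frac{\omega^{n-1}}{(n-1)!}=|a|^{2}\,\frac{\omega^{n}}{n!}$, while the Riemannian metric dual to $\omega$ evaluates to $|\dd f(x)|_{\omega}^{2}=2|a|^{2}$. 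Comparing the two yields the identity.

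The whole argument is thus a routine unraveling of definitions followed by a one-line local computation. The only point requiring attention is the bookkeeping of the numerical factor $2$ relating $|\dd f|_{\omega}^{2}$ to the Hermitian norm $|\partial f|^{2}$ of its $(1,0)$-part; beyond that, no genuine obstacle is expected.
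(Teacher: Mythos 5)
Your proposal is correct and follows essentially the same route as the paper: reduce to the pointwise Kähler identity $F_{\varphi}=\tfrac{1}{2}|\dd f|_{\omega}^{2}$ for $\varphi=\tfrac{\omega^{n-1}}{(n-1)!}$ and verify it by a local flat computation. The only cosmetic difference is that you additionally rotate so that $\partial f(x)=a\,\dd z_1$, which trivializes the sum over indices, whereas the paper's proof keeps the full sum and invokes Equation \eqref{eq:norme} together with Lemma \ref{lm:norme_df}; both evaluations are consistent with the conventions of Section \ref{framework} once the normalizations are tracked correctly.
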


The next theorem is the analogue of Theorem \ref{thm:error_term_in_k} in the case where the point $x$ is not a zero of $f$.

\begin{thm}
\label{thm:error_term_in_k_2}
Let $f \in \mathscr{C}^{\infty}(M, \R)$ be a smooth function vanishing transversally, and let $T_k$ be a Berezin-Toeplitz operator with principal symbol $f$. Let $\varphi$ be a smooth $(n-1,n-1)$-form on $M$. For any $x\notin f^{-1}(0)$ we have 
\[ \int_{B(x,\frac{R}{\sqrt{k}})} \left( \Phi_{T_k}^*\omega_{FS} - k \omega \right) \wedge \varphi = k^{-n} R^{2n} L_{\varphi}(x) \mathrm{Vol}(B_{\R^{2n}}(0,1)) + O(k^{-n-\frac{1}{2}}).  \]
Here $B(x,\frac{R}{\sqrt{k}})$ is the geodesic ball of radius $\frac{R}{\sqrt{k}}$ centered at $x$, and $L_{\varphi}$ is the function defined as 
\[ i \partial\bar{\partial} \log f^2 \wedge \varphi = L_{\varphi} \frac{\omega^{n}}{n!} .\]
\end{thm}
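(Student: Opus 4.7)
The strategy exploits that $x$ lies in the \emph{smooth regime} of Theorem \ref{nonconvergence}: since $f(x) \neq 0$, on a small enough neighborhood $U$ of $x$ with $\overline U \cap f^{-1}(0) = \emptyset$, the Berezin--Toeplitz diagonal asymptotics give a $\mathscr{C}^{\infty}$ expansion
\[ \Phi_{T_k}^*\omega_{FS} - k\omega = i\partial\bar\partial \log f^2 + \rho_k, \qquad \|\rho_k\|_{\mathscr{C}^0(\overline U)} = O(k^{-1}). \]
Indeed, in a local trivialization with K\"ahler potential $\phi$, one has $\Phi_{T_k}^*\omega_{FS} - k\omega = i\partial\bar\partial \log S_k$ with $S_k := \sum_\ell |T_k e_\ell|^2_{h^k}$; since $f$ is real $T_k$ is self-adjoint, so $S_k(y)$ is the value on the diagonal of the Schwartz kernel of $T_k T_k^* = T_k^2$, whose principal symbol is $f^2$. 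The standard Berezin--Toeplitz diagonal expansion then gives $S_k = k^n(f^2 + k^{-1} s_1 + O(k^{-2}))$ in $\mathscr{C}^{\infty}(\overline U)$, and since $f^2 \geq c^2 > 0$ on $\overline U$ one can take the logarithm to obtain the displayed expansion.

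For $k$ large enough $B(x, R/\sqrt k) \subset U$, so the integral splits as
\[ \int_{B(x,R/\sqrt k)} i\partial\bar\partial \log f^2 \wedge \varphi + \int_{B(x,R/\sqrt k)} \rho_k \wedge \varphi, \]
and the second piece is bounded by $\|\rho_k\|_{\mathscr{C}^0}\cdot\|\varphi\|_{\mathscr{C}^0}\cdot \mathrm{Vol}(B(x,R/\sqrt k)) = O(k^{-n-1})$, which is absorbed in the claimed error. For the first piece, the definition $i\partial\bar\partial \log f^2 \wedge \varphi = L_\varphi\,\omega^n/n!$, a Taylor expansion $L_\varphi(y) = L_\varphi(x) + O(\mathrm{dist}(x,y))$, and the Riemannian volume comparison $\mathrm{Vol}(B(x,r)) = r^{2n}\mathrm{Vol}(B_{\R^{2n}}(0,1)) + O(r^{2n+2})$ yield
\[ \int_{B(x,R/\sqrt k)} L_\varphi\, \frac{\omega^n}{n!} = k^{-n} R^{2n} L_\varphi(x)\,\mathrm{Vol}(B_{\R^{2n}}(0,1)) + O(k^{-n-\frac{1}{2}}), \]
which is exactly the claim.

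The only technical input is the $\mathscr{C}^0$ expansion of the Fubini--Study form on $\overline U$ in the first step. Although Theorem \ref{nonconvergence} already asserts locally uniform $\mathscr{C}^{\infty}$ convergence on $M\setminus f^{-1}(0)$, pinning down the limit as $i\partial\bar\partial \log f^2$ with an explicit $O(k^{-1})$ remainder requires the Berezin--Toeplitz diagonal asymptotics for $T_k^2$; this is standard and will already be in place in the paper through the arguments leading to Theorems \ref{thm:error_term} and \ref{nonconvergence}. Once that expansion is available, the rest of the proof is a routine geodesic-versus-Euclidean volume comparison at Planck scale.
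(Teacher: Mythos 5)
Your argument is correct and follows essentially the same route as the paper: reduce $\Phi_{T_k}^*\omega_{FS}-k\omega=i\partial\bar{\partial}\log B_k$ to $i\partial\bar{\partial}\log f^2+O(k^{-1})$ uniformly on a neighborhood of $x$ avoiding $f^{-1}(0)$ (the paper does this via Lemmas \ref{lemme sur tout M} and \ref{lemma loin des zeros}), then Taylor-expand $L_\varphi$ and compare the geodesic ball volume with the Euclidean one (Lemma \ref{lemma loin des zeros 2}). One small correction: $T_k$ need not be self-adjoint even for real $f$ (its subprincipal symbol may be complex), so $S_k(y)=\sum_\ell\abs{T_ke_\ell(y)}^2_{h^k}$ is the diagonal Schwartz kernel of $T_k^*T_k$ rather than of $T_k^2$ (Lemma \ref{sumbasis2}); its principal symbol is still $f^2$, so nothing else in your argument changes.
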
 

It is worth noting the two different behaviors of Theorems \ref{thm:error_term_in_k} and \ref{thm:error_term_in_k_2}. Indeed, if $x \in f^{-1}(0)$, the order of magnitude of $\int_{B(x,\frac{R}{\sqrt{k}})} \left( \Phi_{T_k}^*\omega_{FS} - k \omega \right) \wedge \varphi$ is $O(k^{-n+1})$, whereas if $x \notin f^{-1}(0)$ this order is $O(k^{-n})$. This should be compared to Theorem \ref{nonconvergence}, in which the differential forms $\frac{1}{k}\Phi^*_{T_k}\omega_{FS}$ did not converge exactly on  the zero locus of $f$.
 
\begin{oss}
If $T_k$ is a Berezin-Toeplitz operator with principal symbol $f$, then the operator $T_k - \lambda \mathrm{Id}$ is a Berezin-Toeplitz operator with principal symbol $f - \lambda$. So up to replacing $f$ by $f - \lambda$, we can replace $f^{-1}(0)$ by any regular level of $f$ in the above statements and discussions.
\end{oss} 
 
\subsubsection{Applications to random zeros}
\label{intro:random}

In this section we study how the action of a Berezin-Toeplitz operator affects the zeros of random sections $s\in H^0(M,L^{\otimes k})$. Here, ``random'' is with respect to the natural Gaussian measure $\mu_k$ on $H^0(M,L^{\otimes k})$ given by $\dd\mu_k(s)=\frac{1}{\pi^{N_k}}e^{-\norm{s}^2_{L^2}}\dd s$, where $\dd s$ is the Lebesgue measure on $(H^0(M,L^{\otimes k}),\langle\cdot,\cdot \rangle_{L^2})$ and $N_k = \dim H^0(M,L^{\otimes k})$. Choosing a random element with respect to this probability measure amounts to considering a random linear combination as in Equation \eqref{eq:rand_sec_intro}. Such random holomorphic sections were introduced  in \cite{shiffman-zelditch} and  have been intensively studied since (see for example \cite{bsz,SZvariance,dinhsibony,ComanMarinescu}). This can be seen as a natural geometric generalization of the more classical orthogonal polynomials. 

Given a holomorphic section $s\in H^0(M,L^{\otimes k})$, we denote by $Z_s$ the current of integration on $\{s=0\}$. This is defined by its action on smooth $(n-1,n-1)-$forms $\varphi$ as $\langle Z_s,\varphi\rangle=\int_{\{s=0\}}\varphi$.

Given a Berezin-Toeplitz operator $T_k$, we will be interested in the current-valued random variable $s\in H^0(M,L^{\otimes k})\mapsto Z_{T_ks}$.
Recall that the expected value $\E[ Z_{T_ks}]$ of $Z_{T_ks}$ is defined by the formula
\[ \E[\langle Z_{T_ks},\varphi\rangle]=\int_{s\in H^0(M,L^{\otimes k})}\left(\int_{\{T_ks=0\}}\varphi\right) \dd \mu_k(s) \]
for any smooth $(n-1,n-1)-$form $\varphi$. 
For the basic case $f=1$ (which corresponds to  $T_k=\text{Id}$), such an expected current of integration has been studied in \cite{shiffman-zelditch}, where it is shown that $\frac{1}{k}\E\big[\langle Z_s,\varphi\rangle\big]\rightarrow \frac{1}{2\pi} \int_{M}\omega\wedge\varphi$. Note that the factor $2\pi$ does not appear in \cite{shiffman-zelditch} due to a different convention (the volume of a complex projective line equals $1$ in \cite{shiffman-zelditch} and $2\pi$  in the present paper). 
The following result is a generalization of  \cite{shiffman-zelditch} when the random section is perturbated by a Berezin-Toeplitz operator.
\begin{thm}
\label{thm expectedvalue} Let $f \in \mathscr{C}^{\infty}(M,\R)$ be a smooth function vanishing transversally and let $T_k$ be a Berezin-Toeplitz operator with principal symbol $f$. Then  
\[ \frac{1}{k}\E[Z_{T_k s}] \underset{k \to +\infty}{\longrightarrow} \frac{\omega}{2\pi} \]
 weakly in the sense of currents. Moreover, we have 
\[ \E[Z_{T_k s}] - \frac{k \omega}{2\pi} \underset{k \to +\infty}{\longrightarrow} \frac{i}{2\pi} \partial \bar{\partial} \log f^2 \]
weakly in the sense of currents.
\end{thm}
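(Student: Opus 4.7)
The strategy is to reduce the computation of $\E[Z_{T_ks}]$ to the twisted Fubini-Study form $\Phi_{T_k}^*\omega_{FS}$, and then invoke Theorems \ref{convergenceofcurrents} and \ref{thm:error_term}. The crucial observation is that $T_k$ maps $H^0(M,L^{\otimes k})$ into itself, so $T_ks$ is again a holomorphic section of $L^{\otimes k}$, and by Theorem \ref{embedding} it is almost surely nonzero for $k$ large. The Poincar\'e-Lelong formula therefore applies to $T_ks$ and yields
\[ Z_{T_ks} = \frac{i}{2\pi}\partial \bar\partial \log \|T_ks\|_{h^k}^2 + \frac{k\omega}{2\pi} \]
as currents on $M$.

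Next I would compute $\E[\log \|T_ks(x)\|_{h^k}^2]$ pointwise. Writing $s=\sum_\ell \alpha_\ell e_\ell$ with $\alpha_\ell\sim\mathcal{N}_\C(0,1)$ i.i.d., the value $(T_ks)(x)=\sum_\ell\alpha_\ell (T_ke_\ell)(x)$ is a centered complex Gaussian in the fiber $L_x^{\otimes k}$ whose variance is exactly
\[ V_k(x) \;:=\; \sum_{\ell=1}^{N_k}\|(T_ke_\ell)(x)\|_{h^k}^2, \]
a function which is everywhere positive, again by Theorem \ref{embedding}. Hence $\|T_ks(x)\|_{h^k}^2$ has the same law as $V_k(x)|\beta|^2$ with $\beta\sim\mathcal{N}_\C(0,1)$, and consequently
\[ \E[\log\|T_ks(x)\|_{h^k}^2] = \log V_k(x) + C, \qquad C=\E[\log|\beta|^2], \]
an absolute (finite) constant that disappears under $\partial\bar\partial$. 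A routine Fubini argument, justified by the local integrability of $\log \|T_ks(x)\|_{h^k}^2$ in both $x$ and $s$ (which reduces to $\log|\beta|^2\in L^1$ for the Gaussian measure), then lets one exchange expectation and $\partial\bar\partial$, giving
\[ \E[Z_{T_ks}] = \frac{i}{2\pi}\partial\bar\partial \log V_k + \frac{k\omega}{2\pi}. \]

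Finally, a direct local computation in a trivialization of $L$ shows that $V_k = e^{-k\phi}\sum_\ell |\widetilde{T_ke_\ell}|^2$, so that $i\partial\bar\partial \log V_k = \Phi_{T_k}^*\omega_{FS} - k\omega$ (this is the same identity underlying Theorems \ref{convergenceofcurrents} and \ref{thm:error_term}). Combining, we obtain the clean identity
\[ \E[Z_{T_ks}] \;=\; \frac{1}{2\pi}\,\Phi_{T_k}^*\omega_{FS}, \]
from which the two statements follow immediately: the first by dividing by $k$ and applying Theorem \ref{convergenceofcurrents}, the second by subtracting $\tfrac{k\omega}{2\pi}$ and applying Theorem \ref{thm:error_term}. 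The only step requiring genuine care is the Fubini exchange of $\E$ and $\partial\bar\partial$, but this is mild: the singularities of $\log\|T_ks\|_{h^k}$ are of logarithmic type, hence $L^1_{\mathrm{loc}}$, so testing against a smooth $(n-1,n-1)$-form $\varphi$ and integrating by parts legitimates the swap. Once the identity above is established, the theorem is just a restatement of previously proved results about Fubini-Study forms.
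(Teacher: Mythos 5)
Your proof is correct and takes essentially the same route as the paper: both establish the identity $\E[Z_{T_ks}] = \tfrac{1}{2\pi}\Phi_{T_k}^*\omega_{FS}$ (Lemma \ref{expectedforanyk}) and then apply Theorems \ref{convergenceofcurrents} and \ref{thm:error_term}. Your Gaussian-variance computation $\E[\log\|T_ks(x)\|_{h^k}^2]=\log V_k(x)+C$ with $V_k(x)=B_k(x,x)$ is a cleaner repackaging of the polar-coordinate argument the paper uses (following Shiffman--Zelditch), and the Fubini swap you flag is handled the same way in both.
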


As in Section \ref{subsec:planck}, if we look at the  Planck scale $k^{-\frac{1}{2}}$ we can obtain much more precise asymptotics.

\begin{thm}
\label{thm random error} 
Let $f\in\mathscr{C}^{\infty}(M,\R)$ be a smooth function vanishing transversally, and let $T_k$ be a Berezin-Toeplitz operator with principal symbol $f$. Let $x \in M$. Let $\varphi$ be a smooth $(n-1,n-1)$-form on $M$. For every $R > 0$, 
\[ \int_{B(x,\frac{R}{\sqrt{k}})} \left( \E[Z_{T_k s}] - \frac{k}{2\pi} \omega \right) \wedge \varphi = \begin{cases} k^{-n+1}  \frac{ F_\varphi(x)}{\pi  |\dd f(x)|_{\omega}^2} C_n(R) + O(k^{-n+\frac{1}{2}})  \text{ if } x \in f^{-1}(0), \\[2mm]
 k^{-n} \frac{R^{2n} L_{\varphi}(x) \mathrm{Vol}(B_{\R^{2n}}(0,1))}{2\pi}  + O(k^{-n-\frac{1}{2}}) \text{ if } x \notin f^{-1}(0). \end{cases}  \]
Here $B(x,\frac{R}{\sqrt{k}})$, $F_{\varphi}$, $L_{\varphi}$ and $C_n(R)$ are as in Theorems \ref{thm:error_term_in_k} and \ref{thm:error_term_in_k_2}. 
\end{thm}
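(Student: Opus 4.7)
The plan is to reduce Theorem \ref{thm random error} to the deterministic expansions already proved in Theorems \ref{thm:error_term_in_k} and \ref{thm:error_term_in_k_2} by establishing the Kodaira-type identity of smooth $(1,1)$-forms on $M$:
\begin{equation}\label{eq:plan_identity}
\E[Z_{T_k s}] \;=\; \frac{1}{2\pi}\,\Phi_{T_k}^*\omega_{FS}.
\end{equation}
Granted \eqref{eq:plan_identity}, we have $\E[Z_{T_k s}] - \tfrac{k\omega}{2\pi} = \tfrac{1}{2\pi}(\Phi_{T_k}^*\omega_{FS} - k\omega)$, so pairing with $\varphi$, integrating over $B(x, R/\sqrt{k})$, and dividing the corresponding expansion of Theorem \ref{thm:error_term_in_k} (if $x \in f^{-1}(0)$) or Theorem \ref{thm:error_term_in_k_2} (if $x \notin f^{-1}(0)$) by $2\pi$ produces exactly the two claimed formulas. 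The numerical factors match: the $x \in f^{-1}(0)$ case yields $k^{-n+1}\,\frac{F_\varphi(x)}{\pi |\dd f(x)|_\omega^2}\,C_n(R)$ from $\tfrac{1}{2\pi}\cdot\tfrac{2}{1}=\tfrac{1}{\pi}$, and the $x \notin f^{-1}(0)$ case yields $k^{-n}\,\tfrac{R^{2n} L_\varphi(x)\,\mathrm{Vol}(B_{\R^{2n}}(0,1))}{2\pi}$.

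To prove \eqref{eq:plan_identity} I would follow the Shiffman-Zelditch scheme, adapted to the twisted map $\Phi_{T_k}$. Theorem \ref{embedding} ensures that $\Phi_{T_k}$ is globally defined, so for almost every $s$ the section $T_k s$ does not vanish identically, and the Poincar\'e-Lelong formula gives
$$ Z_{T_k s} \;=\; \frac{k\omega}{2\pi} + \frac{i}{2\pi}\,\partial\bar\partial \log |T_k s|^2_{h^k}. $$
Local integrability of $\log |T_k s|^2_{h^k}$ combined with a Fubini argument allows me to commute $\E$ with $\partial\bar\partial$ after testing against a smooth form. At each fixed $x$, the value $(T_k s)(x) = \sum_\ell \alpha_\ell (T_k e_\ell)(x)$ is a centered complex Gaussian in the fiber $L^{\otimes k}_x$ of variance $\sigma_k(x)^2 := \sum_\ell |(T_k e_\ell)(x)|^2_{h^k}$, so $|T_k s(x)|^2_{h^k}/\sigma_k(x)^2 \sim \mathrm{Exp}(1)$ and therefore
$$ \E\!\left[\log |T_k s(x)|^2_{h^k}\right] \;=\; \log \sigma_k(x)^2 - \gamma, $$
where $\gamma$ is the Euler-Mascheroni constant. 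The additive constant $\gamma$ is killed by $\partial\bar\partial$, leaving
$$ \E[Z_{T_k s}] \;=\; \frac{k\omega}{2\pi} + \frac{i}{2\pi}\,\partial\bar\partial \log \sum_\ell |T_k e_\ell|^2_{h^k}. $$

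It remains to match the right-hand side with $\tfrac{1}{2\pi}\Phi_{T_k}^*\omega_{FS}$ via a direct local computation. Fixing a nonvanishing holomorphic frame $e$ of $L^{\otimes k}$ and writing $T_k e_\ell = \tau_\ell\, e$, $h_e = |e|^2_{h^k}$, the homogeneous-coordinate expression $\omega_{FS} = i \partial\bar\partial \log \sum_\ell |z_\ell|^2$ together with the local identity $i\partial\bar\partial \log h_e = -\omega$ (coming from $\omega = i c_1(L,h)$) yields
$$ i\,\partial\bar\partial \log \sum_\ell |T_k e_\ell|^2_{h^k} \;=\; \Phi_{T_k}^*\omega_{FS} - k\omega, $$
which is the pointwise identity underlying Tian's theorem in the classical case $T_k = \mathrm{Id}$. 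Substituting in the previous display proves \eqref{eq:plan_identity} and hence the theorem. The main obstacle has already been absorbed by the deterministic Theorems \ref{thm:error_term_in_k} and \ref{thm:error_term_in_k_2}; the only delicate point in the present step is the commutation of $\E$ with $\partial\bar\partial$, which is classical in the Shiffman-Zelditch framework, and the verification that the $O(k^{-n+\frac{1}{2}})$ and $O(k^{-n-\frac{1}{2}})$ remainders survive division by $2\pi$, which is automatic.
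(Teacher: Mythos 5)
Your proof is correct and follows the same route as the paper: establish the exact identity $\E[Z_{T_k s}] = \frac{1}{2\pi}\Phi_{T_k}^*\omega_{FS}$ (this is the paper's Lemma \ref{expectedforanyk}), then divide the Planck-scale expansions of Theorems \ref{thm:error_term_in_k} and \ref{thm:error_term_in_k_2} by $2\pi$. Your derivation of the identity uses the explicit Gaussian computation $\E[\log|T_ks(x)|^2_{h^k}] = \log\sigma_k(x)^2 - \gamma$, whereas the paper factors out $|f|_2$ and uses rotation invariance over the sphere, but these are two standard presentations of the same Shiffman--Zelditch argument and the resulting identity and conclusion coincide.
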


Theorem \ref{thm expectedvalue} and Theorem \ref{thm random error} follow from Theorems \ref{convergenceofcurrents}, \ref{thm:error_term}, \ref{thm:error_term_in_k} and \ref{thm:error_term_in_k_2} after the remark that $\E[Z_{T_k s}]$ and  $\frac{1}{2\pi}\Phi_{T_k}^*\omega_{FS}$ are equal  as  currents, see Lemma \ref{expectedforanyk}.

As for Theorems \ref{thm:error_term_in_k} and \ref{thm:error_term_in_k_2}, it is worth noting the two different behaviors of $\int_{B(x,\frac{R}{\sqrt{k}})} \left( \E[Z_{T_k s}] - \frac{k}{2\pi} \omega \right) \wedge \varphi$ for the cases $f(x)=0$ and $f(x)\neq 0$. Indeed, in the first case, this is of order $O(k^{-n+1})$ whereas if $f(x)\neq 0$ this is of order $O(k^{-n})$. This suggests that the locus of zeros of $T_ks_k$ tends to concentrate a little more on $ f^{-1}(0)$. This is confirmed by numerical simulations that we will show in Section \ref{sec:numerics}.

\begin{oss}
Note that if we replace $T_k$ with principal symbol $f$ by $S_k = \lambda T_k$ for some $\lambda \in \R \setminus \{0\}$, the principal symbol of $S_k$ is $g = \lambda f$. So the zero sets of $f$ and $g$ coincide, and if $s$ is a holomorphic section of $L^{\otimes k}$, the zeros of $T_k s$ and $S_k s$ agree. So the quantities that appear in all our results should be invariant by this scaling; one readily checks that they indeed are.
\end{oss}

\subsubsection*{Organization of the paper} The paper is organized as follows. In Section \ref{background} we recall the context and some useful properties of Berezin-Toeplitz operators; this also serves to introduce our notation and conventions. In Section \ref{Sec:proofs} we prove Theorems \ref{embedding}, \ref{convergenceofcurrents}, \ref{thm:error_term} and \ref{nonconvergence}. In Section \ref{sec:planck} we prove Theorems \ref{thm:error_term_in_k} and \ref{thm:error_term_in_k_2}. In Section \ref{Sec:Random} we explain why the previous theorems imply Theorems \ref{thm expectedvalue} and \ref{thm random error}, and check the validity of our results by performing some numerical simulations. In Appendix \ref{sect:appendix} we give a proof of Theorem \ref{symbol_comp}.

\subsubsection*{Acknowledgments} The research leading to this manuscript was mainly performed while the first author was a postdoctoral fellow of the Labex IRMIA. Both authors thank the Labex and the Institut de Recherche Math\'ematique Avanc\'ee for the opportunity to work in these excellent conditions.

\section{Background}
\label{background}

In this section we recall some necessary background in K\"ahler geometry and Berezin-Toeplitz operators. For more details about the latter, see for instance \cite{Schli,LFbook} and the references therein. The main result that we state in this section is the positivity of the Schwartz kernel of $T_k^* T_k $ on the diagonal when the principal symbol $f$ of $T_k$ vanishes transversally; this will be a key ingredient in the proof of our main results. 

\subsection{Framework}
\label{framework}

Let $(M,\omega)$ be a $n$-dimensional compact  K\"ahler manifold such that $[\frac{\omega}{2\pi}]\in H^2(M,\mathbb{Z})$ and let $(L,h)$ be a Hermitian line bundle whose Chern curvature $c_1(L,h)$ equals $-i\omega$. We recall that this curvature is locally defined by $-\partial\bar{\partial}\log h(e_L,e_L)$, where $e_L$ is any local non-vanishing holomorphic section of $L$. 

\paragraph{Induced metrics.} Let $j$ be the complex structure on $TM$ and let $G = \omega(\cdot, j \cdot)$ be the Riemannian metric induced by $\omega$ and $j$ on $TM$; by extending it by sesquilinearity, we obtain an Hermitian metric on $TM \otimes \C$, which in turn induces an Hermitian metric on $T^*M \otimes \C$ by duality. We still denote by $G$ these metrics, and, when the context is clear, we use $|\cdot|_\omega$ for the pointwise norm associated with $G$. If $\alpha \in (T^{1,0}M)^*$ and $\beta \in (T^{0,1}M)^*$, then $G(\alpha, \beta) = 0$. In local holomorphic coordinates $z_1 = x_1 + i y_1$, $\ldots$, $z_n = x_n + i y_n$, we have $\omega = \frac{i}{2} \sum_{\ell,m = 1}^n G_{\ell,m} dz_{\ell} \wedge \dd \bar{z}_m$ and 
\begin{equation} \forall \ell, m \in \{1, \ldots, n\} \quad G\left(\dd z_{\ell}, \dd z_m \right) = 2 G^{\ell,m}, \quad G(\dd x_{\ell}, \dd x_m) = G(\dd y_{\ell}, \dd y_m) = G^{\ell,m}  \label{eq:norme} \end{equation}
where 
\[ \forall \ell, m \in \{1, \ldots, n\} \qquad \sum_{p=1}^n G_{\ell,p} G^{m,p} = \delta_{\ell,m}. \]
Using this expression, one readily checks that
\[ \forall \alpha, \beta \in T^*M \otimes \C \qquad G(\bar{\alpha}, \bar{\beta}) = \overline{G(\alpha, \beta)}. \]
To avoid confusion, we denote by $| \cdot |_{\omega}$ the norm induced by this metric. Moreover, we consider the holomorphic Laplacian on $M$, which reads in these local coordinates
\[ \Delta = 2 \sum_{\ell,m = 1}^n G^{\ell,m} \partial_{z_{\ell}} \partial_{\bar{z}_m}. \]

\paragraph{$L^2$-Hermitian products.} 

For any positive $k\in \mathbb{N}$, we denote by $h^k$ the Hermitian metric on $L^k := L^{\otimes k}$ induced by the metric $h$ on $L$. Remark that for this induced metric we have $c_1(L^k,h^k)=kc_1(L,h)$. 

For any positive $k\in \mathbb{N}$, the space of global holomorphic sections $H^0(M,L^k)$ is naturally equipped with the $L^2$-Hermitian product $\langle\cdot,\cdot \rangle_{L^2}$  defined by 
\[ \langle\sigma,\tau\rangle_{L^2} = \int_{x\in M}h^k_x(\sigma(x),\tau(x))\frac{\omega^n}{n!} \]
for any $\sigma,\tau\in H^0(M,L^k)$. It is standard that the space $H^0(M,L^k)$ is finite-dimensional, with dimension 
\[ N_k = \left( \frac{k}{2\pi} \right)^n \text{Vol}(M) + O(k^{n-1}) \]
where $n = \dim_{\C} M$ and $\text{Vol}(M)$ is the volume of $M$ computed with respect to the volume form $\dd\text{Vol} = \frac{\omega^n}{n!}$ induced by $\omega$.

\subsection{Berezin-Toeplitz operators}
\label{subsec:bto}

For $k \in \N$, let $L^2(M,L^k)$ be the Hilbert space obtained as the closure of $\mathscr{C}^{\infty}(M,L^k)$ with respect to $\langle\cdot,\cdot\rangle_{L^2}$, and let $\Pi_k: L^2(M,L^k) \to H^0(M,L^k)$ be the orthogonal projector from this space to the space of holomorphic sections. For any smooth function $f \in \mathscr{C}^{\infty}(M)$, the Berezin-Toeplitz operator associated with $f$ is the endomorphism $T_k(f) = \Pi_k f : H^0(M,L^k) \to H^0(M,L^k)$. More generally, Berezin-Toeplitz operators are operators of the form
\[ T_k = \Pi_k f(\cdot,k) + R_k: \Holk \to \Holk \]
where $(f(\cdot,k))_{k \in \N}$ is a sequence of elements of $\mathscr{C}^{\infty}(M)$ with an asymptotic expansion of the form
\[ f(\cdot,k) = f_0 + k^{-1} f_1 + k^{-2} f_2 + \ldots \]
for the $\mathscr{C}^{\infty}$ topology, and the operator norm of $R_k$ is a $O(k^{-N})$ for every $N \in \N$. The first term $f_0$ in the asymptotic expansion of $f(\cdot,k)$ is called the \emph{principal symbol} of $T_k$. Similarly, we will call $f_1$ the \emph{subprincipal symbol} of $T_k$ (it is the contravariant subprincipal symbol, see for instance \cite{Cha03}). Recall that any Berezin-Toeplitz operator  $T_k$ has a Schwartz kernel, which is a holomorphic section of $L^k \boxtimes \bar{L}^k \to M \times \overline{M}$ that we still denote by $T_k$. This means that for any $s \in H^0(M,L^k)$ and for any $x \in M$,
\[ (T_k s)(x) = \int_M T_k(x,y) s(y) \ \dd\text{Vol}(y). \]
Recall that if $e_1, \dots, e_{N_k}$ is any orthonormal basis of $H^0(M,L^k)$, then
\begin{equation} \forall x,y \in M \qquad T_k(x,y) = \sum_{\ell = 1}^{N_k} (T_k e_{\ell})(x) \otimes \overline{e_{\ell}(y)}. \label{eq:def_kernel}\end{equation}

In order to prove our main results described in Section \ref{sec:main_results}, we will need to compute the subprincipal term in the asymptotic expansion of the Schwartz kernel of the product of two Berezin-Toeplitz operators. This is the content of Theorem  \ref{symbol_comp}. This is nowadays a standard result and can be found for example in \cite[Formula $(0.16)$]{ma-marinescu-toeplitz} or derived from \cite{Cha03}. Since our notation differs from both these references, for the sake of completeness, we will give a proof of this result following \cite{Cha03} in Appendix \ref{sect:appendix} .

\begin{thm}
\label{symbol_comp} 
Let $T_k, S_k$ be Berezin-Toeplitz operators with respective real-valued principal symbols $f_0, g_0 \in \mathscr{C}^{\infty}(M,\R)$ and subprincipal symbols $f_1, g_1 \in \mathscr{C}^{\infty}(M)$. Then the on-diagonal expansion of the Schwartz kernel of the Berezin-Toeplitz operator $B_k = T_k S_k$ reads
\[ B_k(x,x) = \left( \frac{k}{2\pi} \right)^n \left( b_0(x) + k^{-1} b_1(x) + O(k^{-2}) \right) \]
where the $O(k^{-2})$ is uniform on $M$, with $b_0 = f_0 g_0$ and
\[ b_1 = f_0 g_1 + f_1 g_0 + f_0 \Delta g_0 + g_0 \Delta f_0 + \frac{r}{2} f_0 g_0 + G(\partial g_0, \partial f_0).  \]
Here, $r$ denotes the scalar curvature of $M$ and $G$ is the metric on $T^* M$ defined in Section \ref{framework}. 
\end{thm}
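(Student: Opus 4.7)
The plan is to follow the approach of \cite{Cha03}, based on the off-diagonal asymptotic expansion of the Bergman kernel $\Pi_k(x,y)$ combined with a stationary phase computation. First, modulo terms of order $O(k^{-\infty})$ one may replace $T_k$ and $S_k$ by $\Pi_k f(\cdot, k)$ and $\Pi_k g(\cdot, k)$ respectively, so that
\[ B_k(x,x) = \int_{M \times M} \Pi_k(x,y)\, f(y,k)\, \Pi_k(y,z)\, g(z,k)\, \Pi_k(z,x)\, \dd\Vol(y)\, \dd\Vol(z). \]
Because the Bergman kernel decays like a Gaussian of width $O(k^{-1/2})$ off the diagonal, the integrand concentrates near $y = z = x$; we may therefore localize to such a neighborhood without affecting the expansion.

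Next, I would work in K\"ahler normal coordinates centered at $x$, together with a local holomorphic frame of $L$ whose fibre norm has the standard universal expansion. In these coordinates the Bergman kernel takes a known model form on $\C^n \times \C^n$: a Gaussian factor of the schematic shape $(k/2\pi)^n \exp\bigl(-\tfrac{k}{2}|y-z|_{\omega}^2 + ik\,\Im\langle y,z\rangle\bigr)$, multiplied by a polynomial correction starting at order $k^{-1}$ that involves the scalar curvature $r(x)$ and the curvature tensor. Rescaling $y \mapsto y/\sqrt{k}$, $z \mapsto z/\sqrt{k}$ and Taylor-expanding $f(\cdot,k)$ and $g(\cdot,k)$ up to order two converts $B_k(x,x)$ into an explicit Gaussian integral over $\C^n \times \C^n$ with a polynomial integrand, to which the usual Wick-type calculus of Gaussian moments applies.

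Finally, the explicit Gaussian integration produces the different pieces of $b_1$ from independent sources: the values $f_1(x)$ and $g_1(x)$ yield the $f_0 g_1 + f_1 g_0$ contribution; the quadratic Taylor coefficients of $f_0$ and $g_0$, weighted by second moments of the Gaussian, yield $f_0 \Delta g_0 + g_0 \Delta f_0$; the mixed term $G(\partial g_0, \partial f_0)$ comes from the coupling induced by the phase $e^{ik\,\Im\langle y,z\rangle}$ between the $y$ and $z$ integrations, and is precisely where the nonsymmetric character of the star product appears; finally, the $k^{-1}$ correction of the Bergman kernel contributes the scalar curvature term $\tfrac{r}{2} f_0 g_0$. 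The main obstacle is the bookkeeping: one must carefully assemble contributions coming from the subprincipal symbols, from the quadratic Taylor expansion of the principal symbols, from the metric corrections in normal coordinates, and from the subleading term of the Bergman kernel, while checking that all odd Gaussian moments and spurious curvature terms cancel so that only the stated expression survives.
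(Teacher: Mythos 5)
Your route is genuinely different from the paper's. The appendix proof is entirely algebraic: it invokes Charles's symbol calculus from \cite{Cha03}, namely the covariant/contravariant symbols, the isomorphism $\Psi(f) = f + \hbar\Delta f + O(\hbar^2)$ between them, the expansion of the contravariant star product to order $\hbar$, and the expansion $\sigma(\Pi_k) = 1 + \hbar r/2 + O(\hbar^2)$, and then simply multiplies these expansions out. You instead propose the direct analytic computation: write $B_k(x,x)$ as a double integral against three Bergman kernels, localize near the diagonal, pass to K\"ahler normal coordinates, use the off-diagonal model for $\Pi_k$, rescale by $\sqrt{k}$, and evaluate the resulting Gaussian moments. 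This stationary-phase/Wick-calculus approach is the standard way these coefficients were originally derived, and your attribution of each term in $b_1$ to its source (subprincipal symbols, quadratic Taylor coefficients paired with diagonal second moments, first-order coefficients paired with the off-diagonal covariance coming from the phase $\Im\langle y,z\rangle$, and the $k^{-1}$ correction of the Bergman kernel for the $\tfrac{r}{2}f_0g_0$ term) is essentially correct. The trade-off is clear: the paper's proof is short and clean but is not self-contained, importing nontrivial structural results; your proof would be self-contained modulo the off-diagonal Bergman kernel expansion, but you would have to carry out the Gaussian bookkeeping carefully, including the cancellations among the three $\Pi_k$ corrections and the metric/volume corrections in normal coordinates (checked, e.g., by the consistency test $f_0 = g_0 = 1$, which must return $\Pi_k(x,x)$). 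As written your proposal is a plan rather than a completed proof, but the plan is sound.
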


\begin{lemma}
\label{lm:norme_df}
Let $f \in \mathscr{C}^{\infty}(M)$. Then $|\dd f|_{\omega}^2 = 2 |\partial f|_{\omega}^2$. 
\end{lemma}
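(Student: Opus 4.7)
My approach is to split $df = \partial f + \bar{\partial} f$ into its $(1,0)$ and $(0,1)$ components and exploit the Hermitian metric $G$ introduced in Section~\ref{framework}. Expanding $|df|_{\omega}^2 = G(df,df)$ as a sum of four terms and using that $G$ vanishes on mixed pairs from $(T^{1,0}M)^*$ and $(T^{0,1}M)^*$ (together with Hermitian symmetry, which forces the two cross terms to be complex conjugates of each other and hence both zero), I obtain
\[ |df|_{\omega}^2 = |\partial f|_{\omega}^2 + |\bar{\partial} f|_{\omega}^2. \]

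To conclude, I would use that the lemma is invoked in the sequel only for real-valued principal symbols $f$ (as is tacitly the case throughout the paper), so that $\overline{\partial f} = \bar{\partial} f$. The conjugation identity $G(\bar{\alpha},\bar{\beta}) = \overline{G(\alpha,\beta)}$ recalled in Section~\ref{framework} then yields $|\bar{\partial} f|_{\omega}^2 = \overline{|\partial f|_{\omega}^2} = |\partial f|_{\omega}^2$, the last equality because a squared Hermitian norm is real. Plugging this into the above splitting gives the claimed $|df|_{\omega}^2 = 2|\partial f|_{\omega}^2$.

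I do not foresee any real obstacle here: the argument is purely formal and relies only on the elementary properties of $G$ collected in Section~\ref{framework}. The single point meriting attention is the tacit reality assumption on $f$; without it, one can only assert the splitting $|df|_{\omega}^2 = |\partial f|_{\omega}^2 + |\bar{\partial} f|_{\omega}^2$ with possibly unequal summands. A completely equivalent, more pedestrian alternative is to compute both sides in local holomorphic coordinates using~\eqref{eq:norme} and the Wirtinger formulas $\partial_{z_\ell} = \tfrac{1}{2}(\partial_{x_\ell} - i\partial_{y_\ell})$, which at a point where $(G^{\ell,m}) = I$ reduces the identity to the trivial $\sum_\ell \bigl((\partial_{x_\ell} f)^2 + (\partial_{y_\ell} f)^2\bigr) = 4\sum_\ell |\partial_{z_\ell} f|^2$.
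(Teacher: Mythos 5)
Your proof is correct and follows essentially the same route as the paper: split $\dd f = \partial f + \bar{\partial} f$, observe that the cross terms vanish because $G$ pairs $(T^{1,0}M)^*$ and $(T^{0,1}M)^*$ to zero, and use the conjugation identity $G(\bar{\alpha},\bar{\beta})=\overline{G(\alpha,\beta)}$ together with $\overline{\partial f}=\bar{\partial} f$ to conclude $|\bar{\partial} f|_{\omega}^2=|\partial f|_{\omega}^2$. You are also right to flag the tacit reality assumption: the lemma statement only says $f\in\mathscr{C}^{\infty}(M)$, yet both your argument and the paper's need $f$ real-valued (the identity fails for, say, a nonconstant holomorphic $f$, where $\bar{\partial} f=0$ forces $|\dd f|_{\omega}^2=|\partial f|_{\omega}^2$), and indeed the lemma is only ever invoked for real-valued symbols in the sequel.
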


\begin{proof}
Since $\dd f = \partial f + \bar{\partial} f$, we have that
\[ |\dd f|_{\omega}^2 = |\partial f|_{\omega}^2 + 2 \Re G(\partial f, \bar{\partial} f) + |\bar{\partial} f|_{\omega}^2 =  |\partial f|_{\omega}^2 + |\bar{\partial} f|_{\omega}^2 \]
since $\partial f \in \Omega^{(1,0)}(M)$ and $\bar{\partial} f \in \Omega^{(0,1)}(M)$. Moreover, 
\[ |\bar{\partial} f|_{\omega}^2 = G(\bar{\partial} f, \bar{\partial} f) = \overline{G(\partial f, \partial f)} = G(\partial f, \partial f) = |\partial f|_{\omega}^2. \]
\end{proof}

\begin{cor}\label{secondterm} Let $T_k$ be a Berezin-Toeplitz operator with real-valued principal symbol $f \in \mathscr{C}^{\infty}(M,\R)$ and subprincipal symbol $g \in \mathscr{C}^{\infty}(M)$. Then the on-diagonal expansion of the Schwartz kernel of the Berezin-Toeplitz operator $B_k = T_k^* T_k$ equals 
\[ \left( \frac{k}{2\pi} \right)^n \left( f^2 + k^{-1} b_1 + O(k^{-2}) \right)  \]
where the remainder $O(k^{-2})$ is uniform on $M$ and
\[ b_1 = 2 f \Re(g) + 2 f \Delta f +  \frac{r}{2} f^2 + \frac{1}{2} |\dd f|_{\omega}^2.   \]
\end{cor}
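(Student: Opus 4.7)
The plan is to deduce the corollary directly from Theorem \ref{symbol_comp} by taking $S_k = T_k$ and replacing $T_k$ with $T_k^*$ in that statement. The main preliminary observation is a symbol-level identification: if $T_k$ is a Berezin-Toeplitz operator with principal symbol $f$ and subprincipal symbol $g$, then its adjoint $T_k^*$ is also a Berezin-Toeplitz operator whose full symbol is the complex conjugate of the symbol of $T_k$. In particular, $T_k^*$ has principal symbol $\bar f = f$ (since $f$ is real-valued) and subprincipal symbol $\bar g$. This fact follows from the definition $T_k = \Pi_k f(\cdot,k) + R_k$, the self-adjointness of $\Pi_k$, and the compatibility of the $O(k^{-N})$ remainder with taking adjoints; I would invoke it directly as a standard property of the Berezin-Toeplitz calculus.

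With this in hand, I would apply Theorem \ref{symbol_comp} to the product $B_k = T_k^* T_k$, which corresponds to the substitutions $f_0 = g_0 = f$, $f_1 = \bar g$, $g_1 = g$. The principal term gives $b_0 = f \cdot f = f^2$, and the subprincipal term yields
\[ b_1 = f\, g + \bar g\, f + f \Delta f + f \Delta f + \tfrac{r}{2} f^2 + G(\partial f, \partial f) = 2 f \Re(g) + 2 f \Delta f + \tfrac{r}{2} f^2 + |\partial f|_{\omega}^2, \]
using $g + \bar g = 2 \Re(g)$.

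The final step is to rewrite $|\partial f|_{\omega}^2$ in terms of $|\dd f|_{\omega}^2$. This is exactly the content of Lemma \ref{lm:norme_df}, which gives $|\partial f|_{\omega}^2 = \tfrac{1}{2} |\dd f|_{\omega}^2$. Substituting yields the desired formula.

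There is no substantial obstacle here: the corollary is essentially a bookkeeping exercise. The only point that requires minor care is the identification of the subprincipal symbol of $T_k^*$, and the recognition that the cross term $G(\partial g_0, \partial f_0)$ in Theorem \ref{symbol_comp} specializes to the intrinsic quantity $|\partial f|_{\omega}^2$ when $g_0 = f_0 = f$ is real, which then matches $\tfrac{1}{2}|\dd f|_\omega^2$ via Lemma \ref{lm:norme_df}.
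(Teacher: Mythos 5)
Your proof is correct and follows essentially the same route as the paper: identify the principal and subprincipal symbols of $T_k^*$ as $\bar f = f$ and $\bar g$, apply Theorem \ref{symbol_comp} to the product $T_k^* T_k$ to get $b_1 = 2f\Re(g) + 2f\Delta f + \frac{r}{2}f^2 + |\partial f|_{\omega}^2$, and convert $|\partial f|_{\omega}^2$ to $\frac{1}{2}|\dd f|_{\omega}^2$ via Lemma \ref{lm:norme_df}. The paper likewise invokes the conjugation of symbols under adjoints without further justification, so no gap.
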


\begin{proof}
The principal symbol of $T_k^*$ is $\bar{f}$, and its subprincipal symbol is $\bar{g}$. So Theorem \ref{symbol_comp} yields
\[ b_1 = 2 f \Re(g) + 2 f \Delta f +  \frac{r}{2} f^2 + |\partial f|_{\omega}^2 \] 
and Lemma \ref{lm:norme_df} gives the result.
\end{proof}

The previous result implies the following crucial fact that will be key in the proof of all our main results: if $f$ vanishes transversally, the kernel of $B_k$ on the diagonal is always strictly positive. More precisely:

\begin{cor}
\label{positivity} Let $T_k$ be a Berezin-Toeplitz operator with real-valued principal symbol $f$ and let $B_k = T_k^* T_k$. If $f$ vanishes transversally, there exists $c>0$ such that, for $k$ large enough and for any $x\in M$, we have $B_k(x,x) > c k^{n-1}$.
\end{cor}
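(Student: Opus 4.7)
The plan is to combine Corollary \ref{secondterm} with a dichotomy argument based on the behavior of the two leading terms in the expansion of $B_k(x,x)$, splitting $M$ into a neighborhood of $f^{-1}(0)$ and its complement.

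First, I would analyze the subprincipal coefficient
\[ b_1 = 2 f \Re(g) + 2 f \Delta f + \frac{r}{2} f^2 + \frac{1}{2} |\dd f|_{\omega}^2 \]
on $f^{-1}(0)$: there the first three terms vanish identically, and transversality means $\dd f(x) \neq 0$ at every $x \in f^{-1}(0)$. Hence $b_1 = \frac{1}{2}|\dd f|_{\omega}^2$ on $f^{-1}(0)$, and by compactness there exists $\delta > 0$ such that $|\dd f|_\omega^2 \geq 4\delta$ on $f^{-1}(0)$. By continuity we can find an open neighborhood $U$ of $f^{-1}(0)$ on which $\frac{1}{2}|\dd f|_\omega^2 \geq 2\delta$ and, shrinking $U$ if necessary, on which $|2f\Re(g) + 2f\Delta f + \frac{r}{2}f^2| \leq \delta$ (using that $f$ vanishes on $f^{-1}(0)$ and that the coefficients $g$, $\Delta f$, $r$ are bounded on $M$). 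Thus $b_1 \geq \delta$ on $U$.

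Second, on the complement $M \setminus U$, which is a compact set on which $f$ does not vanish, there exists $c_0 > 0$ with $f^2 \geq c_0$. I then treat the two regions separately. On $M \setminus U$, Corollary \ref{secondterm} together with the uniformity of the $O(k^{-2})$ remainder gives, for $k$ sufficiently large,
\[ B_k(x,x) \geq \left(\frac{k}{2\pi}\right)^n \frac{c_0}{2} \geq c_1 k^n \geq c_1 k^{n-1}. \]
On $U$, we use $f(x)^2 \geq 0$ and $b_1(x) \geq \delta$, so for $k$ large enough (uniformly in $x$, thanks to uniformity of the remainder),
\[ B_k(x,x) \geq \left(\frac{k}{2\pi}\right)^n \left( k^{-1} \delta - C k^{-2} \right) \geq \left(\frac{k}{2\pi}\right)^n \frac{\delta}{2 k} = \frac{\delta}{2 (2\pi)^n} k^{n-1}. \]
Taking $c$ to be the minimum of the two constants obtained concludes the proof.

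The only slightly delicate step is the preliminary shrinking of $U$: I need that the sign-indefinite part of $b_1$ is dominated by the positive gradient term $\frac{1}{2}|\dd f|_\omega^2$ near $f^{-1}(0)$. This is where transversality is essential, since it guarantees that the gradient term stays bounded below by a positive constant on a whole neighborhood of the zero set, while the other contributions to $b_1$ are controlled by $|f|$ and hence can be made arbitrarily small by restricting to a sufficiently narrow tubular neighborhood.
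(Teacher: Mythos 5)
Your proof is correct and follows essentially the same route as the paper's: both split $M$ into a tubular neighborhood $U$ of $f^{-1}(0)$, where the subprincipal term $b_1 \geq \delta > 0$ yields the $k^{n-1}$ lower bound, and the compact complement $M \setminus U$, where $f^2 \geq c_0 > 0$ yields an even better $k^n$ bound. Your treatment of the lower bound for $b_1$ on $U$ (explicitly dominating the sign-indefinite terms by the gradient term) is slightly more detailed than the paper's appeal to continuity of $b_1$, but the argument is the same.
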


\begin{proof}
By Corollary \ref{secondterm} we have the following uniform asymptotics:
\[ B_k(x,x)= \left( \frac{k}{2\pi} \right)^n (b_0(x) + k^{-1} b_1(x) + O(k^{-2})) \]
where $b_0(x)=|f(x)|^2$ and 
\[ b_1 = 2 f \Re(g) + 2 f \Delta f +  \frac{r}{2} f^2 + \frac{1}{2} |\dd f|_{\omega}^2  \]
with $g$ the subprincipal symbol of $T_k$. Since $f$ vanishes transversally, we have that
\[ b_1(x) =  \frac{1}{2} |\dd f(x)|_{\omega}^2 > 0 \]
for $x \in f^{-1}(0)$, so $b_1$ is strictly positive in a neighborhood $U$ of $ f^{-1}(0)$. Let $c$ be the minimum of $b_1$ on $U$ and $C$ be the minimum of $|f|^2$ on $M\setminus U$. We then have  $B_k(x,x)\geq C k^n + O(k^{n-1})$ outside $U$ and $B_k(x,x)\geq c k^{n-1}+O(k^{n-2})$ on $U$. Hence the result. 
\end{proof}
The next corollary is equivalent to the previous one. However, we prefer to put a separate statement because we will use it repeatedly throughout the paper.

\begin{cor}
\label{positivity2} 
Let $T_k$ be a Berezin-Toeplitz operator with real-valued principal symbol $f$ and let $B_k = T_k^* T_k$. If $f$ vanishes transversally, then there exists $c>0$ such that, for any $k$ large enough  we have $\abs{f}^2+k^{-1}b_1 > c k^{-1}$ and $\abs{f}^2 + \frac{k^{-1}}{2} |\mathrm{d}f|_{\omega}^{2} > c k^{-1}$.
\end{cor}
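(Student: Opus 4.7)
The plan is to mimic the proof of Corollary \ref{positivity} by splitting $M$ into a small open neighborhood $U$ of the zero locus $f^{-1}(0)$ and its complement $M\setminus U$, then verifying the needed lower bounds in each region separately.

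Recall from Corollary \ref{secondterm} that
\[ b_1 = 2 f \Re(g) + 2 f \Delta f + \frac{r}{2} f^2 + \frac{1}{2} |\dd f|_\omega^2. \]
Since $f$ vanishes transversally, at every $x \in f^{-1}(0)$ we have $\dd f(x) \neq 0$, so the only surviving term gives $b_1(x) = \frac{1}{2}|\dd f(x)|_\omega^2 > 0$. By continuity of $b_1$ and of $|\dd f|_\omega^2$, together with compactness of the closed set $f^{-1}(0)$, there exist an open neighborhood $U$ of $f^{-1}(0)$ and a constant $c_1 > 0$ such that both $b_1 \geq c_1$ and $\frac{1}{2}|\dd f|_\omega^2 \geq c_1$ hold throughout $U$. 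On the compact complement $M \setminus U$, the function $|f|^2$ is continuous and nowhere zero, so attains a positive minimum $C > 0$.

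With this decomposition, both inequalities follow at once. For the first, on $U$ we have $|f|^2 + k^{-1}b_1 \geq k^{-1} b_1 \geq c_1 k^{-1}$, while on $M\setminus U$ we have $|f|^2 + k^{-1} b_1 \geq C \geq c_1 k^{-1}$ as soon as $k \geq c_1/C$. For the second, on $U$ we have $|f|^2 + \frac{k^{-1}}{2}|\dd f|_\omega^2 \geq c_1 k^{-1}$ directly, and on $M\setminus U$ the inequality $|f|^2 \geq C \geq c_1 k^{-1}$ again suffices for $k$ large. Taking $c = c_1$ and $k$ large enough concludes the argument.

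No real obstacle is encountered beyond what was already handled in Corollary \ref{positivity}: transversality ensures that the subleading coefficient $b_1$ does not degenerate on the zero set of $f$, which is precisely where the principal term $|f|^2$ vanishes. In fact the first inequality is equivalent to Corollary \ref{positivity}, obtained by dividing the uniform expansion $B_k(x,x) = (k/2\pi)^n(|f|^2 + k^{-1}b_1 + O(k^{-2}))$ by $(k/2\pi)^n$ and absorbing the $O(k^{-2})$ remainder into the strict positive lower bound; the second inequality is isolated as a separate statement because $\frac{1}{2}|\dd f|_\omega^2$ is the cleaner quantity to keep track of in the arguments that follow.
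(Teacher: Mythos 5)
Your proof follows essentially the same route as the paper: the paper gives no separate proof of Corollary \ref{positivity2}, merely remarking that it is equivalent to Corollary \ref{positivity}, whose proof you are reproducing with the same decomposition of $M$ into a neighborhood $U$ of $f^{-1}(0)$ and its complement. There is, however, one small but real slip in the treatment of the first inequality on $M\setminus U$: you assert $\abs{f}^2 + k^{-1}b_1 \geq C$ there, which implicitly requires $b_1 \geq 0$ on $M\setminus U$. This is not guaranteed: from Corollary \ref{secondterm}, $b_1 = 2f\Re(g) + 2f\Delta f + \frac{r}{2}f^2 + \frac{1}{2}\abs{\dd f}_\omega^2$, and away from $f^{-1}(0)$ the first three terms have no definite sign, so $b_1$ can be negative on $M\setminus U$. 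The paper's proof of Corollary \ref{positivity} sidesteps this by absorbing the contribution into a signed $O(k^{n-1})$ term. The fix in your formulation is immediate: since $M$ is compact, $\abs{b_1} \leq B$ for some $B>0$, so on $M\setminus U$ one has $\abs{f}^2 + k^{-1}b_1 \geq C - k^{-1}B \geq C/2 \geq c_1 k^{-1}$ once $k$ is large enough (after possibly shrinking $c_1$). Your argument for the second inequality on $M\setminus U$ is unaffected, since $\abs{\dd f}_\omega^2 \geq 0$ everywhere. With that one correction the proof is complete and identical in spirit to the paper's.
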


\section{Kodaira maps and Fubini-Study forms}
\label{Sec:proofs}

The goal of this section is to study the Kodaira map associated with $T_k$. First we prove that it is well-defined for $k$ large enough; this is Theorem \ref{embedding}. Then we prove Theorems \ref{convergenceofcurrents} and \ref{thm:error_term} which deal with the convergence in the sense of currents of the associated Fubini-Study form. Finally, we show the non-convergence of this form in the sense of differential forms, that is Theorem \ref{nonconvergence}. We follow the notation introduced in Section \ref{background}.

\subsection{The Kodaira map is well-defined }

In this section we prove that the Kodaira map $\Phi_{T_k}$ defined in Equation \eqref{eq:kodaira} is well-defined everywhere on $M$ for $k$ large enough (see Theorem \ref{embedding}). This follows from the combination of the the positivity result for the Schwartz kernel of $T_k^* T_k$ (see Corollary \ref{positivity}) and the next lemma. 

\begin{lemma}
\label{sumbasis2}
Let $A\in \textrm{End}\big(H^0(M,L^k)\big)$ and $e_1, \dots, e_{N_k}$ be any orthonormal basis of $H^0(M,L^k)$. Then, for any $x,y\in M$ we have the equality 
\[ \sum_{\ell=1}^{N_k}(A e_{\ell})(x)\otimes \overline{(A e_{\ell})(y)} = \sum_{\ell=1}^{N_k}(A^* A e_{\ell})(x) \otimes \overline{e_{\ell}(y)}.\]
\end{lemma}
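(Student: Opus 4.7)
The plan is to expand both sides in the basis $\{e_m(x) \otimes \overline{e_p(y)}\}_{1 \leq m,p \leq N_k}$ of the fiber $L^k_x \otimes \overline{L^k_y}$ and identify the coefficient of each basis element. The only tool I need is the pointwise reproducing identity: since $(e_\ell)_\ell$ is an orthonormal basis of $H^0(M,L^k)$, every $s \in H^0(M,L^k)$ satisfies $s = \sum_m \langle s, e_m\rangle_{L^2}\, e_m$, and evaluating at $x$ yields $s(x) = \sum_m \langle s, e_m\rangle_{L^2}\, e_m(x)$.

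First I would apply this expansion to $s = Ae_\ell$ (in the $x$-variable) and to its conjugate (in the $y$-variable) inside the left-hand side. After interchanging the order of summation the LHS becomes
\[
\sum_{m,p=1}^{N_k} \Bigl(\sum_{\ell=1}^{N_k} \langle Ae_\ell, e_m\rangle\, \overline{\langle Ae_\ell, e_p\rangle}\Bigr)\, e_m(x) \otimes \overline{e_p(y)}.
\]
Next I would apply the same expansion to $s = A^*A e_p$ to rewrite the right-hand side as
\[
\sum_{m,p=1}^{N_k} \langle A^*A e_p, e_m\rangle\, e_m(x) \otimes \overline{e_p(y)}.
\]

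The proof then reduces to a single matrix-coefficient identification. The inner sum on the left is computed by writing $\overline{\langle Ae_\ell, e_p\rangle} = \langle A^* e_p, e_\ell\rangle$ and then collapsing $\sum_\ell \langle Ae_\ell, e_m\rangle \langle A^* e_p, e_\ell\rangle$ via Parseval/completeness applied to the basis $(e_\ell)$; this produces exactly $\langle A^*A e_p, e_m\rangle$ and matches the RHS coefficient term by term. Equivalently, one interprets the inner sum as the $(m,p)$-entry of the associated operator product, and recognizes both sides as the Schwartz kernel of $A^*A$ via \eqref{eq:def_kernel}.

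The main obstacle — minor but worth handling carefully — is the bookkeeping of Hermitian adjoint and conjugation conventions, together with the ordering of factors in $L^k_x \otimes \overline{L^k_y}$. Once these are fixed consistently with the kernel convention in \eqref{eq:def_kernel}, the identity reduces to the elementary relation $\sum_\ell \langle Ae_\ell, e_m\rangle\, \overline{\langle Ae_\ell, e_p\rangle} = \langle A^*A e_p, e_m\rangle$, from which the lemma follows immediately.
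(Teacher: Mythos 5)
Your strategy is the same as the paper's: expand both sides over $e_m(x)\otimes\overline{e_p(y)}$, interchange sums, and collapse the sum over $\ell$. The expansion and interchange are fine, but the collapse --- which carries all the content --- is carried out in the wrong order. From $\overline{\langle Ae_\ell,e_p\rangle}=\langle A^*e_p,e_\ell\rangle$ one gets
\[
\sum_{\ell}\langle Ae_\ell,e_m\rangle\,\langle A^*e_p,e_\ell\rangle
=\Big\langle A\Big(\textstyle\sum_\ell\langle A^*e_p,e_\ell\rangle e_\ell\Big),e_m\Big\rangle
=\langle AA^*e_p,e_m\rangle=\langle A^*e_p,A^*e_m\rangle,
\]
not $\langle A^*Ae_p,e_m\rangle=\langle Ae_p,Ae_m\rangle$ as you claim. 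The two coincide only when $A$ is normal: for a nonzero nilpotent $A$ supported on a two-dimensional subspace (say $Ae_1=e_2$, $Ae_\ell=0$ otherwise) the left-hand side of the lemma is $e_2(x)\otimes\overline{e_2(y)}$ while $\sum_\ell(A^*Ae_\ell)(x)\otimes\overline{e_\ell(y)}=e_1(x)\otimes\overline{e_1(y)}$. In other words, the left-hand side is the Schwartz kernel of $AA^*$, not of $A^*A$, and since the kernel map is injective the identity as stated fails for non-normal $A$. So your ``elementary relation'' at the end is the one false step, and the correct conclusion has $AA^*$ in place of $A^*A$.

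You are in good company: the paper's own proof makes the identical transposition at the step $\sum_\ell A_{\ell,p}A^*_{q,\ell}=(A^*A)_{q,p}$ (that sum is $\langle AA^*e_q,e_p\rangle$). The slip is harmless for the applications, because the lemma is only ever used with $A=T_k$ on the diagonal, and $T_kT_k^*$ and $T_k^*T_k$ are Berezin--Toeplitz operators whose first two on-diagonal kernel coefficients from Theorem \ref{symbol_comp} agree when $f$ is real (both give $b_0=f^2$ and $b_1=2f\Re(g)+2f\Delta f+\tfrac r2f^2+\tfrac12|\dd f|_\omega^2$), so every positivity and asymptotic statement downstream is unaffected. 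Still, to make your argument correct you should either prove the identity with $AA^*$ on the right, or apply the stated version to $A^*$ in place of $A$.
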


\begin{proof}
For any $\ell, p \in \{1,N_k\}$, let $A_{\ell,p} = \langle A e_{\ell},e_p \rangle$. Then 
\[ \begin{split}  \sum_{\ell=1}^{N_k}(A e_{\ell})(x)\otimes \overline{(A e_{\ell})(y)} & =  \sum_{\ell=1}^{N_k} \left(  \sum_{p=1}^{N_k} A_{\ell,p} e_p(x) \right) \otimes \left(  \sum_{q=1}^{N_k} \overline{A_{\ell,q}} \ \overline{e_q(y)} \right) \\
& =  \sum_{\ell=1}^{N_k}  \sum_{p=1}^{N_k}  \sum_{q=1}^{N_k} A_{\ell,p} \overline{A_{\ell,q}} e_p(x) \otimes \overline{e_q(y)} \\
& =  \sum_{\ell=1}^{N_k}  \sum_{p=1}^{N_k}  \sum_{q=1}^{N_k} A_{\ell,p} A^*_{q,\ell} e_p(x) \otimes \overline{e_q(y)}  \\
& =  \sum_{p=1}^{N_k}  \sum_{q=1}^{N_k} (A^*A)_{q,p} e_p(x) \otimes \overline{e_q(y)} \\
& = \sum_{q=1}^{N_k} (A^*A e_q)(x) \otimes \overline{e_q(y)}.  \end{split} \]
\end{proof}

\begin{proof}[Proof of Theorem \ref{embedding}]
The map $\Phi_{T_k}$ is well-defined everywhere on $M$ if and only if $\bigcap_{i=1}^{N_k}\{T_k s_i=0\}=\emptyset$. In order to prove this we will show that there exists an integer $k_0$ such that, for any $x\in M$ and any $k \geq k_0$, the quantity $\sum_{\ell = 1}^{N_k} \abs{(T_k e_{\ell})(x)}^2_k$ is strictly positive. By Equation \eqref{eq:def_kernel} and Lemma \ref{sumbasis2}, $\sum_{\ell = 1}^{N_k}\abs{(T_k e_{\ell})(x)}^2_k$ equals the value on the diagonal of the Schwartz kernel $B_k$ of $T_k^* T_k$, that is $\sum_{\ell = 1}^{N_k}\abs{(T_k e_{\ell})(x)}^2_k = B_k(x,x)$. By Corollary \ref{positivity}, for $k$ large enough, $B_k(x,x)$ is strictly positive, hence the result.
\end{proof}

\subsection{(Non)-convergence of the Fubini-Study forms}
In this section, we prove Theorem \ref{convergenceofcurrents}, which deals with the convergence of the normalized Fubini-Study forms $\frac{1}{k} \Phi^*_{T_k}\omega_{FS}$ in the sense of currents, and Theorem \ref{nonconvergence}, which instead says that such forms do not converge in the sense of differential forms. 
 Throughout this section, $T_k$ is a Berezin-Toeplitz operator with real-valued principal symbol $f$ and subprincipal symbol $g$. As above, the on-diagonal expansion of the Schwartz kernel of $B_k = T_k^* T_k$ is denoted by
\[ B_k(x,x) =   \left( \frac{k}{2\pi} \right)^n (b_0(x) + k^{-1} b_1(x) + O(k^{-2})) \]
where $b_0$ and $b_1$ are given by Corollary \ref{secondterm}. In what follows we will use the slightly abusive notation $B_k$ for the restriction of $B_k$ to the diagonal; we will never need to evaluate this kernel away from the diagonal.

\begin{proof}[Proof of Theorem \ref{convergenceofcurrents}]
The following equality of smooth forms is standard:
\begin{equation}\label{equalofforms}
i\partial\bar{\partial}\log B_k = \Phi^*_{T_k}\omega_{FS}-k\omega
\end{equation}
so that, in order to prove the theorem,  we have  to show that $\frac{1}{k}\partial\bar{\partial}\log B_k$ goes to $0$ in the sense of currents as $k \to +\infty$. 

Remark that we have the equality $\partial\bar{\partial}\log B_k = \partial\bar{\partial}\log\big((2 \max \abs{f}^2 \left(\frac{k}{2\pi}\right)^{n})^{-1} B_k\big)$. Moreover, for $k$ large enough, we have 
\[ ck^{-1} < \left(2 \max \abs{f}^2 \left(\frac{k}{2\pi}\right)^{n}\right)^{-1} B_k < 1,\]
where the left-hand inequality follows from Corollary \ref{positivity} and the right-hand one from Corollary \ref{secondterm}.

For any  $(n-1,n-1)$ smooth form  $\varphi$ on $M$, let us denote by $f_{\varphi}$ the function given by the equality $\partial\bar{\partial}\varphi=f_{\varphi}\frac{\omega^n}{n!}$ and by $\norm{\partial\bar{\partial}\varphi}_{\infty}$ the sup-norm of $f_{\varphi}$. We then have

\[ \begin{split} \abs{\int_{M} \partial\bar{\partial}\log B_k\wedge \varphi} & = \abs{\int_{M} \partial\bar{\partial}\log \big((2 \max \abs{f}^2k^{n})^{-1} B_k\big)\wedge \varphi} \\
& \leq \int_{M}\abs{\partial\bar{\partial}\log \big((2 \max \abs{f}^2k^{n})^{-1} B_k\big)\wedge \varphi} \\
& \leq \int_{M} \abs{\log (ck^{-1})\partial\bar{\partial}\varphi} \\
& = \int_{M} \abs{\log (ck^{-1})f_{\varphi}\frac{\omega^n}{n!}} \\
& = \norm{\partial\bar{\partial}\varphi}_{\infty}O(\log k).\end{split} \]

This implies that, for any $(n-1,n-1)$ smooth form  $\varphi$, the quantity $\int_{M}\frac{1}{k}\log B_k \partial\bar{\partial}\varphi$ goes to $0$ as $k\rightarrow + \infty$, which exactly means that $\frac{1}{k}\partial\bar{\partial}\log B_k$ goes to $0$ in the sense of currents as $k\rightarrow + \infty$. Hence the result.
\end{proof}

\begin{proof}[Proof of Theorem \ref{nonconvergence}]
We start by proving that the sequence $\frac{1}{k}\Phi_{T_k}^*\omega_{FS}$ converges to $\omega$ locally uniformly on  $M\setminus  f^{-1}(0)$ in the $\mathscr{C}^{\infty}$ norm.
Remark that the equality \eqref{equalofforms} 
implies that this is equivalent to showing that for any relatively compact open set $U\subset M\setminus  f^{-1}(0)$ and for any $m\in\mathbb{N}$, we have $\norm{\partial\bar{\partial}\log B_k}_{\mathscr{C}^m,U}=O(1)$. Now, we know that $B_k(x) = \left(\frac{k}{2\pi}\right)^n(f^2+O(k^{-1}))$ uniformly, so that we have 
\[ \partial\bar{\partial}\log B_k =\partial\bar{\partial}\log (f^2+O(k^{-1})) \]
uniformly.  The result follows from the fact that  there exists two positive constants $c_U$ and $C_U$ such that $c_U\leq f^2 \leq C_U$ on $U$, so that 
\[ \partial\bar{\partial}\log (f^2+O(k^{-1})) = \partial\bar{\partial}\log ( f^2(1+O(k^{-1})))=\partial\bar{\partial}\log (f^2)+O(k^{-1})\]
on $U$. This shows that $\norm{\partial\bar{\partial}\log B_k}_{\mathscr{C}^m,U}=O(1)$, which was our goal.

Let us now prove that the smooth form $\frac{1}{k}\partial\bar{\partial}\log B_k$ does not tend to $0$ on $ f^{-1}(0)$ as $k\rightarrow +\infty$.
We have 
\begin{equation} \begin{split} \partial\bar{\partial}\log B_k & = \partial\bar{\partial}\log(f^2 + k^{-1} b_1+O(k^{-2})) \\
& = \partial\bigg(\frac{2f\bar{\partial}f + k^{-1} \bar{\partial} b_1+O(k^{-2})}{f^2 + k^{-1} b_1}\bigg) \\ 
& = \frac{(2\partial f\wedge\bar{\partial} f+2f\partial\bar{\partial}f + k^{-1}\partial\bar{\partial}b_1)}{f^2 + k^{-1} b_1} \\
& - \frac{(2f\bar{\partial}f + k^{-1}\bar{\partial}b_1)\wedge(2f\partial f+\partial k^{-1} b_1)  + O(k^{-2})}{(f^2 + k^{-1} b_1)^2}.  \end{split} \label{eq:logBk} \end{equation}
If we evaluate this form at a point $x$ where $f$ vanishes we obtain
\[ \begin{split} \left(\partial\bar{\partial}\log B_k\right)_x & = \frac{(2 (\partial f\wedge\bar{\partial} f)_x + k^{-1} (\partial\bar{\partial}b_1)_x) k^{-1} b_1(x) - k^{-2} ( \bar{\partial}b_1 \wedge \partial b_1)_x  + O(k^{-2})}{b_1(x)^2k^{-2}} \\
& = \frac{2k (\partial f\wedge\bar{\partial} f)_x}{b_1(x)} + O(1) \\
& =  \frac{4k (\partial f\wedge\bar{\partial} f)_x}{|\dd f(x)|_{\omega}^2} + O(1).\end{split} \]
At a point where $f$ vanishes we then have that
$\frac{1}{k}\partial\bar{\partial}\log B_k$ tends to $\frac{4\partial f\wedge\bar{\partial} f}{|\dd f|_{\omega}^2}$
which is non zero as $f$ vanishes transversally. Moreover, at a point where $f$ does not vanish, Equation \eqref{eq:logBk} shows that $\partial\bar{\partial}\log B_k = O(1)$, so $\frac{1}{k}\partial\bar{\partial}\log B_k$ goes to zero as $k \to +\infty$.
\end{proof}

\subsection{Convergence of the error term}
In this section, we prove Theorem \ref{thm:error_term}, which estimates (in the sense of currents) the error term $\frac{1}{k} \Phi^*_{T_k}\omega_{FS}-\omega$.
We start with a lemma, which is actually part of the statement of Theorem \ref{thm:error_term}.
\begin{lemma}\label{integrability of logf} Let $f:M\rightarrow\R$ be a smooth function vanishing transversally. Then $\log f^2$ is (locally) integrable, so $\partial\bar{\partial}\log f^2$ is a well-defined current.
\end{lemma}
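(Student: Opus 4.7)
The plan is to reduce the problem to the one-dimensional integrability of $\log|t|$ via a local change of coordinates near the zero set.

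First, set $Z = f^{-1}(0)$. On the open set $M \setminus Z$, the function $\log f^2$ is smooth, hence locally integrable. The only issue is therefore the behavior of $\log f^2$ near $Z$. Since $f$ vanishes transversally, $\dd f$ is nowhere zero on $Z$, so $Z$ is a smooth real hypersurface in $M$.

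Second, I would localize near an arbitrary point $p \in Z$. By the submersion theorem, since $\dd f(p) \neq 0$, there exist local real coordinates $(x_1, x_2, \ldots, x_{2n})$ in a neighborhood $U$ of $p$ such that $f(x_1, \ldots, x_{2n}) = x_1$. In these coordinates, $\log f^2 = 2 \log |x_1|$, and on any bounded coordinate neighborhood $V \Subset U$ of the form $[-\epsilon,\epsilon] \times K$ (with $K$ compact in the remaining variables) we have
\[ \int_V |\log f^2| \, \dd\text{Vol} \leq C \int_K \int_{-\epsilon}^{\epsilon} |2 \log |x_1|| \, \dd x_1 \, \dd x_2 \cdots \dd x_{2n} < \infty, \]
since $\int_{-\epsilon}^{\epsilon} |\log|x_1|| \, \dd x_1$ is finite and the volume form on $M$ is bounded above by $C \, \dd x_1 \cdots \dd x_{2n}$ on $V$ for some $C > 0$.

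Third, by compactness of $Z$ (which follows from the compactness of $M$ and the closedness of $Z$), I cover $Z$ by finitely many such neighborhoods $V_1, \ldots, V_N$. Together with the smoothness of $\log f^2$ on the compact set $M \setminus (V_1 \cup \ldots \cup V_N)$ (if nonempty, otherwise directly), this yields
\[ \int_M |\log f^2| \, \dd\text{Vol} \leq \sum_{j=1}^N \int_{V_j} |\log f^2| \, \dd\text{Vol} + \int_{M \setminus \cup_j V_j} |\log f^2| \, \dd\text{Vol} < \infty, \]
which gives the (local, and in fact global) integrability. Once $\log f^2$ is locally integrable, it defines a distribution on $M$, and thus $i \partial\bar{\partial} \log f^2$ is a well-defined current (of bidegree $(1,1)$).

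There is essentially no obstacle here: the argument is a textbook consequence of the transversality assumption combined with the integrability of $\log|t|$ near $0$ in one real variable. The only point that requires a little care is ensuring the local coordinate chart straightens $f$, which is immediate from the submersion theorem applied to $f$ at each point of $Z$.
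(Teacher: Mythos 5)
Your proof is correct and follows essentially the same route as the paper: localize near $f^{-1}(0)$, straighten coordinates so the problem reduces to the one-dimensional integrability of $\log|x_1|$ near $0$, then invoke compactness of $M$ to cover. The only (minor, and arguably favorable) difference is that you use the submersion theorem to normalize $f$ to $x_1$ directly, whereas the paper uses Hadamard's lemma to write $f = x_1 g + O(|x|^2)$ with $g(0)\neq 0$; your normal form is slightly cleaner and sidesteps the need to control the lower-order terms.
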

\begin{proof}

As $M$ is compact, it is enough to show that $\log f^2$ is locally integrable. Locally around a point  $x$ where $f(x)\neq 0$ there is nothing to prove since $\log f^2$ is locally bounded there.

Let us consider a point $x\in  f^{-1}(0)$. By Hadamard's lemma, we can find a small neighborhood $U$ of $x$ and local (real) coordinates $x_1,\dots,x_{2n}$, in which $f^{-1}(0)$ becomes $\{x_1 = 0\}$, such that $f(x_1,\dots,x_{2n})=x_1g(x_1,\dots,x_{2n}) + h(x_1,\dots,x_{2n})$, where the function $g$ is smooth with $g(0) \neq 0$ and $h(x_1,\dots,x_{2n}) = O(x_1^2 + \ldots + x_{2n}^2)$. Up to replacing $U$ with a smaller neighborhood, we can assume that $U$ is of the form $(-\epsilon,\epsilon)^{2n}$.
We then have $\log f^2 = \log x_1^2 + \log g^2 + O(1)$, so that
\[ \int_{U}\log f^2=\int_{(-\epsilon,\epsilon)^{2n}}\log x_1^2 \ \mathrm{d}x_1\cdots\mathrm{d}x_{2n} + \int_{(-\epsilon,\epsilon)^{2n}}\log g^2 \ \mathrm{d}x_1\cdots\mathrm{d}x_{2n} +O(1). \]
The integral $\int_{(-\epsilon,\epsilon)^{2n}}\log g^2 \ \mathrm{d}x_1\cdots\mathrm{d}x_{2n}$ is bounded as $g^2$ is bounded from below by a positive constant. 
The integral $\displaystyle\int_{(-\epsilon,\epsilon)^{2n}}\log x_1^2 \ \mathrm{d}x_1\cdots\mathrm{d}x_{2n}$ equals $(2\epsilon)^{2n-1}\displaystyle\int_{(-\epsilon,\epsilon)}\log x_1^{2}\mathrm{d}x_1$, which is also finite as $\log x_1^{2}$ is integrable around $0$. Hence the result.
\end{proof}
\begin{proof}[Proof of Theorem \ref{thm:error_term}]
Recall that, as currents,
\[ \Phi_{T_k}^*\omega_{FS} - k \omega = i \partial \bar{\partial} \log B_k. \]
For any smooth $(n-1,n-1)$-form $\varphi$ we then get
\[\int_{M}\left(\Phi_{T_k}^*\omega_{FS} - k \omega\right)\wedge\varphi=i\int_M\log B_k \ \partial \bar{\partial}\varphi\]
so that we have to prove the following convergence
\begin{equation}\label{what to prove}
\int_M\log B_k \ \partial \bar{\partial}\varphi \underset{k \to +\infty}{\longrightarrow} \int_{M}\log f^2\ \partial \bar{\partial}\varphi.
\end{equation}

Recall that by Corollary \ref{secondterm}, we have $\log B_k=\log\left(\frac{k}{2\pi}\right)^n+\log(f^2+k^{-1}b_1+O(k^{-2}))$, so that 
\begin{equation}\label{firstequality in proof}
\int_{M}\log B_k\partial \bar{\partial}\varphi=\int_{M}\log(f^2+k^{-1}b_1+O(k^{-2}))\partial \bar{\partial}\varphi.
\end{equation}
By Corollary \ref{positivity2}, we get 
\[\log(f^2+k^{-1}b_1+O(k^{-2}))=\log\big((f^2+k^{-1}b_1)(1+O(k^{-1}))\big)=\log(f^2+k^{-1}b_1)+O(k^{-1})\] 
so that 
\begin{equation}\label{secondequality in proof}
\int_{M}\log(f^2+k^{-1}b_1+O(k^{-2}))\partial \bar{\partial}\varphi=\int_{M}\log(f^2+k^{-1}b_1)\partial \bar{\partial}\varphi+O(k^{-1}).
\end{equation}
In order to prove \eqref{what to prove}, we then have to show that 
\begin{equation}\label{what to prove 2}
\int_M\log(f^2+k^{-1}b_1)\partial \bar{\partial}\varphi  \underset{k \to +\infty}{\longrightarrow} \int_{M} \log f^2\partial \bar{\partial}\varphi.
\end{equation}
 For this, we will partition $M$ into two subsets. For this, remark that since $b_1 = \frac{1}{2} \abs{\mathrm{d}f}_{\omega}^2 >0$ on $\Sigma:=  f^{-1}(0)$, we can find a positive $\epsilon$ such that $b_1$ is strictly positive on an $\epsilon$-tubular neighborhood $\Sigma_{\epsilon}$ of $\Sigma$.
We can then write 
\begin{equation}\label{decomposition}
\int_{M}\log(f^2+k^{-1}b_1)\partial \bar{\partial}\varphi=\int_{M\setminus \Sigma_{\epsilon}}\log(f^2+k^{-1}b_1)\partial \bar{\partial}\varphi+\int_{ \Sigma_{\epsilon}}\log(f^2+k^{-1}b_1)\partial \bar{\partial}\varphi
\end{equation}
For the first integral in the right-hand side of \eqref{decomposition}, remark that $\log (f^2+k^{-1}b_1)$ converges to $\log f^2$ uniformly on $M\setminus \Sigma_{\epsilon}$ and then 
\begin{equation}\label{first term}
\int_{M\setminus \Sigma_{\epsilon}}\log(f^2+k^{-1}b_1)\partial \bar{\partial}\varphi\xrightarrow[k \to +\infty]{} \int_{M\setminus \Sigma_{\epsilon}}\log(f^2)\partial \bar{\partial}\varphi.
\end{equation}
It remains to prove that 
\begin{equation}\label{what to prove 3}
\int_{\Sigma_{\epsilon}}\log(f^2+k^{-1}b_1)\partial \bar{\partial}\varphi\xrightarrow[k \to +\infty]{}  \int_{ \Sigma_{\epsilon}}\log f^2\partial \bar{\partial}\varphi.
\end{equation}

By the choice of $\epsilon$, the function $f^2+k^{-1}b_1$ is strictly positive on $\Sigma_{\epsilon}$. Moreover, up to taking a smaller $\epsilon$, we can suppose that $f^2+k^{-1}b_1<1$ on $\Sigma_{\epsilon}$, for $k$ large enough.
Let us write $\partial \bar{\partial}\varphi=\psi\omega^n$, for $\psi$ a smooth function on $M$.
We have the pointwise convergence $\log(f^2+k^{-1}b_1)\psi\rightarrow\log(f^2)\psi$. Moreover, for $k$ large enough, we have $$\abs{\log(f^2+k^{-1}b_1)\psi}\leq \abs{\log (f^2)}\sup\abs{\psi}.$$
By Lemma \ref{integrability of logf}, the function $\log (f^2)$ is integrable, so by Lebesgue's dominated convergence theorem, we obtain the convergence \eqref{what to prove 3}. Hence the result.
\end{proof}

\section{Estimates at Planck scale}
\label{sec:planck}

This section is organized as follows. In Section \ref{subsec:onthezero} we prove Theorem \ref{thm:error_term_in_k} and  Corollary \ref{cor:univ} and in Section \ref{subsec:outsidethezero} we prove Theorem \ref{thm:error_term_in_k_2}. 
We will need the following notation and lemma in both Sections \ref{subsec:onthezero} and \ref{subsec:outsidethezero}. For any $(n-1,n-1)$-form $\varphi$, any $R>0$, any $k\in \mathbb{N}$ and any point $x\in M$, we denote by $\varphi_{x,R,k}$ the  $(n-1,n-1)$-form $\chi_{B(x,\frac{R}{\sqrt{k}})}\varphi$, where $\chi_{B(x,\frac{R}{\sqrt{k}})}$ is the characteristic function of the geodesic ball $B(x,\frac{R}{\sqrt{k}})$. For a smooth $(1,1)$-form $\psi$, we denote by $\langle \psi,\varphi_{x,R,k}\rangle$ the natural pairing $\int_{B(x,\frac{R}{\sqrt{k}})}\psi\wedge\varphi$.

\begin{lemma}\label{lemme sur tout M} Let $\varphi$ be a smooth $(n-1,n-1)$-form and let $R>0$.
Then, for any $x\in M$, we have
 $$\langle\partial\bar{\partial}\log B_k,\varphi_{x,R,k}\rangle = \langle\partial\bar{\partial}\log (f^2+k^{-1}b_1),\varphi_{x,R,k}\rangle +O(k^{-n-1})$$
 as $k \rightarrow +\infty$.
\end{lemma}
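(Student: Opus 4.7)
The strategy is to isolate the difference $\log B_k - \log(f^2+k^{-1}b_1)$, show it is uniformly small of order $k^{-1}$ on $M$, and then use integration by parts to transfer $\partial\bar{\partial}$ from the logarithm onto the smooth form $\varphi$, thereby gaining a factor of $k^{-n}$ from the small volume of $B(x, R/\sqrt{k})$.

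By Corollary \ref{secondterm}, we may write $B_k(x,x) = \left(\tfrac{k}{2\pi}\right)^n(v_k(x) + r_k(x))$ where $v_k := f^2 + k^{-1} b_1$ and $r_k = O(k^{-2})$ uniformly in $\mathscr{C}^{\infty}(M)$. Since $\partial\bar{\partial}$ kills the constant $n\log(k/(2\pi))$, setting $w_k := r_k/v_k$ we obtain
\[ \partial\bar{\partial} \log B_k - \partial\bar{\partial} \log(f^2+k^{-1}b_1) = \partial\bar{\partial} \log(1+w_k). \]
Corollary \ref{positivity2} gives $v_k \geq ck^{-1}$ uniformly on $M$, so $\|w_k\|_{L^{\infty}(M)} = O(k^{-1})$, and in particular $\|\log(1+w_k)\|_{L^{\infty}(M)} = O(k^{-1})$.

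Applying Stokes' theorem twice on the geodesic ball $B := B(x,R/\sqrt{k})$ transfers $\partial\bar{\partial}$ onto $\varphi$:
\[ \int_B \partial\bar{\partial} \log(1+w_k) \wedge \varphi = \int_B \log(1+w_k)\,\partial\bar{\partial}\varphi + \int_{\partial B}\bar{\partial}\log(1+w_k)\wedge\varphi + \int_{\partial B}\log(1+w_k)\,\partial\varphi. \]
The interior term is directly bounded by $\|\log(1+w_k)\|_{\infty} \cdot \|\partial\bar{\partial}\varphi\|_{\infty} \cdot \mathrm{Vol}(B) = O(k^{-1})\cdot O(1)\cdot O(k^{-n}) = O(k^{-n-1})$, which matches the target bound.

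The main obstacle is to control the two boundary integrals on $\partial B$, whose $(2n-1)$-dimensional volume is of order $k^{-n+1/2}$. Differentiating $w_k = r_k/v_k$ via the decomposition $\bar{\partial} w_k = v_k^{-1}\bar{\partial}r_k - r_k v_k^{-2}\bar{\partial} v_k$ and using only the uniform bound $v_k \geq ck^{-1}$ yields the naive estimate $\|\bar{\partial} w_k\|_{\infty} = O(k^{-1/2})$ inside $B$ when $x \in f^{-1}(0)$, which gives only a $O(k^{-n})$ boundary contribution. To reach $O(k^{-n-1})$, I would rescale by $z = \sqrt{k}(y-x)$ so that $\tilde{v}_k(z) = V(z)/k + O(k^{-3/2})$ with $V(z) = (\mathrm{d}f(x)\cdot z)^2 + b_1(x) > 0$; in these rescaled variables $\partial\bar{\partial}\log(1+\tilde{w}_k)$ is $O(k^{-1})$ as a form in $z$ on $\{|z|\leq R\}$, and an additional integration by parts on the closed sphere $\partial B$, combined with the spherical symmetry of the leading profile $1/V$, should absorb the remaining $k^{-n+1/2}$ boundary measure and deliver the claimed $O(k^{-n-1})$ bound. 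Away from $f^{-1}(0)$ the argument is immediate since $v_k$ is bounded below by a positive constant inside $B$, so every estimate gains an extra factor of $k^{-1}$.
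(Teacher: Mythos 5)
Your opening step --- writing $\partial\bar\partial\log B_k-\partial\bar\partial\log(f^2+k^{-1}b_1)=\partial\bar\partial\log(1+w_k)$ with $w_k=r_k/v_k$ and $\norm{w_k}_{L^\infty}=O(k^{-1})$ via Corollary \ref{positivity2} --- is exactly the paper's. The proof breaks down at the detour through Stokes' theorem, and you essentially say so yourself: the two boundary integrals over $\partial B(x,R/\sqrt{k})$ are only $O(k^{-n})$ by any estimate you actually establish, and the concluding sentence (``an additional integration by parts on the closed sphere, combined with the spherical symmetry of the leading profile $1/V$, should absorb the remaining boundary measure'') is not an argument. Note that $V(z)=(\mathrm{d}f(x)\cdot z)^2+b_1(x)$ is not spherically symmetric, and that it is unclear what integrating by parts on a closed sphere is supposed to produce. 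Moreover, your own rescaled bound --- $\partial\bar\partial\log(1+\tilde w_k)=O(k^{-1})$ as a form in $z$ --- translates back, once you account for the Jacobian factor $k^{-(n-1)}$ carried by the pullback of $\varphi$ and the $O(1)$ volume of $\{\abs{z}\le R\}$, into exactly $O(k^{-n})$ for the whole integral. So you are genuinely one power of $k$ short, not merely missing a boundary trick.

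The paper's proof never integrates by parts. It writes $f^2+k^{-1}b_1+O(k^{-2})=(f^2+k^{-1}b_1)(1+O(k^{-1}))$, so that the difference of the two currents is $\partial\bar\partial\log(1+O(k^{-1}))$; it then bounds this $(1,1)$-form pointwise by $O(k^{-1})$ (using that the kernel expansion holds with all derivatives together with the lower bound $v_k\ge ck^{-1}$) and multiplies by $\mathrm{Vol}\big(\mathrm{Supp}(\varphi_{x,R,k})\big)=O(k^{-n})$. In other words, the entire content of the lemma is a pointwise estimate on the form times a volume estimate on the support of the test form; transferring derivatives onto $\varphi$ only manufactures boundary terms supported on a set of $(2n-1)$-measure $k^{-n+\frac12}$, which are strictly harder to control than the interior term and which the sharp cutoff $\chi_{B(x,R/\sqrt{k})}$ does not let you smooth away. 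To repair your argument you should drop Stokes entirely and prove the pointwise bound on $\partial\bar\partial\log(1+w_k)$ directly; the delicate point there is controlling the terms involving $v_k^{-2}$ and $v_k^{-3}$ near $f^{-1}(0)$, where $v_k^{-1}$ is of size $k$ --- this is precisely the step the quoted proof disposes of with the remark that $\partial\bar\partial\log(1+O(k^{-1}))=O(k^{-1})$.
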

\begin{proof} Recall that $B_k = \big(\frac{k}{2\pi}\big)^n(f^2+k^{-1}b_1+O(k^{-2}))$ (see Corollary \ref{secondterm}), so that 
$$\partial\bar{\partial}\log B_k = \partial\bar{\partial}\log \left( \left(\frac{k}{2\pi}\right)^n(f^2+k^{-1}b_1+O(k^{-2})) \right) =\partial\bar{\partial}\log \left(f^2+k^{-1}b_1+O(k^{-2})\right).$$
Now, by Corollary \ref{positivity2}, we can write $f^2+k^{-1}b_1+O(k^{-2})=(f^2+k^{-1}b_1)(1+O(k^{-1}))$, so that we obtain 
\[
\langle\partial\bar{\partial}\log (f^2+k^{-1}b_1+O(k^{-2})),\varphi_{x,R,k}\rangle=\langle\partial\bar{\partial}\log \big((f^2+k^{-1}b_1)(1+O(k^{-1})\big),\varphi_{x,R,k}\rangle.
\]
 The latter equals 
\[\langle\partial\bar{\partial}\log (f^2+k^{-1}b_1),\varphi_{x,R,k}\rangle + \langle\partial\bar{\partial}\log (1+O(k^{-1})),\varphi_{x,R,k}\rangle.\]
We obtain the result by remarking that $\partial\bar{\partial}\log (1+O(k^{-1}))=O(k^{-1})$ and that $\varphi_{x,R,k}$ satisfies  $\mathrm{Vol}(\mathrm{Supp}\big(\varphi_{x,R,k})\big)=O(k^{-n})$.
\end{proof}

\subsection{Estimates on the zero set}\label{subsec:onthezero}

In this section, we prove Theorem \ref{thm:error_term_in_k} and Corollary \ref{cor:univ}. We also compute the universal constant $C_n(R)$ appearing in these results; this is done in Proposition \ref{prop:calcul_integrale}
\begin{lemma}\label{lemma sur les zeros} Let $\varphi$ be a smooth $(n-1,n-1)$-form and let $R>0$.
Then, for any $x\in  f^{-1}(0)$, we have
\[ \langle\partial\bar{\partial}\log (f^2+k^{-1}b_1),\varphi_{x,R,k}\rangle = 4 \int_{B(x,\frac{R}{\sqrt{k}})}\frac{k^{-1} |\mathrm{d}f|^{2}_{\omega} - 2 f^2}{(2 f^2+ k^{-1} |\mathrm{d}f|_{\omega}^{2})^2}\partial f\wedge\bar{\partial}f\wedge\varphi+O(k^{-n+\frac{1}{2}}) \]
as $k \rightarrow +\infty$. 
\end{lemma}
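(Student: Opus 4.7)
The plan is to work directly with $u := f^2 + k^{-1} b_1$ and exploit the identity
\[ \partial\bar{\partial}\log u \;=\; \frac{u\,\partial\bar{\partial} u \;-\; \partial u\wedge\bar{\partial} u}{u^2}, \]
computing the right-hand side explicitly from the formula $b_1 = \tfrac{1}{2}|\mathrm{d}f|_{\omega}^2 + 2f\Re(g) + 2f\Delta f + \tfrac{r}{2} f^2$. Two a priori bounds are used throughout: since $x \in f^{-1}(0)$ and $f$ is smooth, $|f| = O(k^{-1/2})$ on $B(x,R/\sqrt{k})$; and by Corollary \ref{positivity2}, $u \geq c\, k^{-1}$ on $M$, so $u^{-1} = O(k)$ uniformly on the ball.

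First I would compute $\partial u = 2f\,\partial f + k^{-1}\partial b_1$, $\bar{\partial}u = 2f\,\bar{\partial}f + k^{-1}\bar{\partial}b_1$ and $\partial\bar{\partial}u = 2\,\partial f\wedge\bar{\partial} f + 2f\,\partial\bar{\partial} f + k^{-1}\partial\bar{\partial}b_1$. Expanding the numerator of $\partial\bar{\partial}\log u$ and keeping only the leading contributions, the cross-terms containing a factor of $f$ (hence $O(k^{-1/2})$) or a factor of $k^{-1}$ combined with derivatives of $b_1$ (which are $O(1)$) are all of size $O(k^{-3/2})$. What remains is
\[ u\,\partial\bar{\partial}u - \partial u\wedge\bar{\partial} u \;=\; 2\bigl(k^{-1}b_1 - f^2\bigr)\,\partial f\wedge\bar{\partial} f \;+\; O(k^{-3/2}). \]
Dividing by $u^2 = O(k^{-2})$ turns the error into $O(k^{1/2})$ pointwise on the ball.

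Next I would eliminate $b_1$ in favor of $|\mathrm{d}f|_{\omega}^2$. Writing $b_1 = \tfrac{1}{2}|\mathrm{d}f|_{\omega}^2 + f\,h$ with $h\in\mathscr{C}^{\infty}(M)$, one gets $k^{-1}b_1 - f^2 = \tfrac{1}{2}(k^{-1}|\mathrm{d}f|_{\omega}^2 - 2f^2) + O(k^{-3/2})$ and $f^2 + k^{-1}b_1 = \tfrac{1}{2}(2f^2 + k^{-1}|\mathrm{d}f|_{\omega}^2) + O(k^{-3/2})$ on the ball. Since the denominator $2f^2 + k^{-1}|\mathrm{d}f|_{\omega}^2$ is bounded below by $c\, k^{-1}$ on a neighbourhood of $x$ (by transversality), the relative error from this substitution is $O(k^{-1/2})$, producing the desired form
\[ \partial\bar{\partial}\log(f^2+k^{-1}b_1) \;=\; \frac{4\bigl(k^{-1}|\mathrm{d}f|_{\omega}^2 - 2f^2\bigr)}{(2f^2+k^{-1}|\mathrm{d}f|_{\omega}^2)^2}\,\partial f\wedge\bar{\partial} f \;+\; O(k^{1/2}) \]
pointwise on $B(x,R/\sqrt{k})$, modulo an $O(k^{1/2})$ form.

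Finally, since $\varphi$ is smooth and the volume of $B(x,R/\sqrt{k})$ is $O(k^{-n})$, pairing the $O(k^{1/2})$ remainder with $\varphi$ and integrating yields an error of $O(k^{-n+1/2})$, as claimed. The main obstacle is really the error bookkeeping: every division by $u$ inflates pointwise bounds by a factor of $k$, so naive error control looks hopeless, and one must carefully verify that the \emph{leading} pointwise error is $O(k^{1/2})$ (rather than larger) so that integration against the ball of volume $O(k^{-n})$ produces the advertised remainder. Everything else — the computation of $\partial u$, $\partial\bar{\partial} u$, and the algebraic manipulation replacing $b_1$ by $\tfrac{1}{2}|\mathrm{d}f|_{\omega}^2$ — is routine.
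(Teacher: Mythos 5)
Your proposal is correct and follows essentially the same route as the paper: expand $\partial\bar{\partial}\log u$ via $\frac{u\,\partial\bar{\partial}u-\partial u\wedge\bar{\partial}u}{u^2}$, discard cross-terms using $|f|=O(k^{-1/2})$ on the ball, replace $b_1$ by $\tfrac12|\mathrm{d}f|_\omega^2$ up to admissible errors controlled by the lower bound of Corollary \ref{positivity2}, and integrate the resulting pointwise $O(k^{1/2})$ remainder over the ball of volume $O(k^{-n})$. The paper merely packages the leading cancellation through the identity $f^2\partial\bar{\partial}f^2-\partial f^2\wedge\bar{\partial}f^2=-2f^2\,\partial f\wedge\bar{\partial}f+2f^3\,\partial\bar{\partial}f$, which is the same bookkeeping you carry out term by term.
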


\begin{proof} We start by developing $\partial\bar{\partial}\log (f^2+k^{-1}b_1)$ and obtain
\[ \partial\bar{\partial}\log (f^2+k^{-1}b_1) = \frac{(f^2+k^{-1}b_1)\partial\bar{\partial}(f^2+k^{-1}b_1) - \partial (f^2+k^{-1}b_1)\wedge \bar{\partial}(f^2+k^{-1}b_1)}{(f^2+k^{-1}b_1)^2}. \]
We now use that
\[ f^2 \partial\bar{\partial} f^2 - \partial f^2 \wedge \bar{\partial} f^2 = - 2 f^2 \partial f \wedge \bar{\partial} f + 2 f^3 \partial \bar{\partial} f, \]
that $\abs{f}=O(k^{-1/2})$ and that $b_1 = \frac{1}{2} |\mathrm{d}f |_{\omega}^{2}+O(k^{-1})$ on $B(x,\frac{R}{\sqrt{k}})$ (see Corollary \ref{secondterm}) to obtain, after expanding and collecting the lower order terms, that on $B(x,\frac{R}{\sqrt{k}})$ 
\begin{equation}\label{reste in lemma 2}
\partial\bar{\partial}\log (f^2+k^{-1}b_1) = \frac{(k^{-1}\abs{\mathrm{d}f}_{\omega}^{2} - 2 f^2)}{\big(f^2+ \frac{k^{-1}}{2}(\abs{\mathrm{d}f}_{\omega}^{2} + O(k^{-1}))\big)^{2}}\partial f\wedge\bar{\partial}f + O(k^\frac{1}{2}).
\end{equation}
By Corollary \ref{positivity2}, we have that
\[ f^2 + \frac{k^{-1}}{2} (  \abs{\mathrm{d}f}_{\omega}^{2} + O(k^{-1}))=(f^2 + \frac{k^{-1}}{2} \abs{\mathrm{d}f}_{\omega}^{2})(1 + O(k^{-1})),\]
hence Equation \eqref{reste in lemma 2} yields
\[ \partial\bar{\partial}\log (f^2+k^{-1}b_1) = \frac{ 4 (k^{-1}\abs{\mathrm{d}f}_{\omega}^{2} -  2 f^2)}{\big(2 f^2 + k^{-1}\abs{\mathrm{d}f}_{\omega}^{2}\big)^{2}}\partial f\wedge\bar{\partial}f + O(k^\frac{1}{2}) \]
on $B(x,\frac{R}{\sqrt{k}})$, whose volume is a $O(k^{-n})$, hence the result.
\end{proof}

\begin{lemma}
\label{lemma sur les zeros 2} 
Let $\varphi$ be a smooth $(n-1,n-1)$-form and let $R>0$.
Then, for any $x\in  f^{-1}(0)$, we have
\[  i \int_{B(x,\frac{R}{\sqrt{k}})} \frac{k^{-1}\abs{\mathrm{d}f}_{\omega}^{2} -  2 f^2}{(2 f^2 + k^{-1} \abs{\mathrm{d}f}_{\omega}^{2})^2} \partial f\wedge\bar{\partial}f \wedge \varphi = \frac{k^{-n+1}  F_{\varphi(x)}}{ |\dd f(x)|_{\omega}^2} \int_{B_{\R^{2n}}(0,R)}\frac{1 - 2 t_1^2}{(1 + 2 t_1^2)^2} \dd \lambda(t) +O(k^{-n+\frac{1}{2}}) \]
as $k \rightarrow +\infty$, where $F_{\varphi}$ is such that $i \partial f\wedge\bar{\partial}f \wedge \varphi = F_{\varphi}\frac{\omega^n}{n!}$ and $\dd\lambda = \mathrm{d}t_1\dots \mathrm{d}t_{2n}$.
\end{lemma}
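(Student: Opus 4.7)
The plan is to reduce the integral to a model Euclidean integral by rescaling at scale $k^{-1/2}$ in coordinates adapted to both the metric and to $f$. First I would choose geodesic normal coordinates $(y_1,\dots,y_{2n})$ centered at $x$, and then further apply an orthogonal rotation so that $\dd f(x) = |\dd f(x)|_\omega\, \dd y_1$ at the origin; this is possible because $G$ equals the Euclidean inner product at the origin in normal coordinates. In such coordinates, $\omega^n/n! = (1 + O(|y|^2))\,\dd y_1\cdots \dd y_{2n}$ and $B(x,R/\sqrt{k})$ coincides with the Euclidean ball of the same radius up to a $O(k^{-1})$ correction on its boundary that contributes to higher order. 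The substitution $y = t/\sqrt{k}$ then maps $B(x,R/\sqrt{k})$ onto $B_{\R^{2n}}(0,R)$ and introduces a Jacobian $k^{-n}$.

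Next, since $f(x)=0$, Taylor expansion gives, uniformly on $B_{\R^{2n}}(0,R)$,
\[ f(t/\sqrt{k}) = k^{-1/2}|\dd f(x)|_\omega\, t_1 + O(k^{-1}), \qquad |\dd f|_\omega^2(t/\sqrt{k}) = |\dd f(x)|_\omega^2 + O(k^{-1/2}), \]
so $f^2 = k^{-1}|\dd f(x)|_\omega^2 t_1^2 + O(k^{-3/2})$. Substituting, the numerator becomes $k^{-1}|\dd f(x)|_\omega^2(1-2t_1^2) + O(k^{-3/2})$ and the denominator $k^{-2}|\dd f(x)|_\omega^4(1+2t_1^2)^2 + O(k^{-5/2})$. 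Here I would invoke Corollary~\ref{positivity2}, which provides a uniform lower bound $(2f^2 + k^{-1}|\dd f|_\omega^2)^2 \geq c^2 k^{-2}$, to absorb the additive $O(k^{-5/2})$ into a multiplicative $(1 + O(k^{-1/2}))$ factor. Dividing, the rational factor in the integrand equals
\[ \frac{k}{|\dd f(x)|_\omega^2}\cdot \frac{1-2t_1^2}{(1+2t_1^2)^2} + O(k^{1/2}). \]

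I would then rewrite the integrand using $i\,\partial f\wedge\bar\partial f\wedge\varphi = F_\varphi\,\omega^n/n!$ and use that $F_\varphi$ is smooth, so $F_\varphi(t/\sqrt k) = F_\varphi(x) + O(k^{-1/2})$. Combining with $\omega^n/n! = (1+O(k^{-1}))\,\dd\lambda(t)/k^n$ after rescaling, the integral becomes
\[ \int_{B_{\R^{2n}}(0,R)} \left[\frac{k}{|\dd f(x)|_\omega^2}\frac{1-2t_1^2}{(1+2t_1^2)^2} + O(k^{1/2})\right]\bigl[F_\varphi(x) + O(k^{-1/2})\bigr]\bigl(1 + O(k^{-1})\bigr)\,k^{-n}\,\dd\lambda(t), \]
whose leading term is exactly the claimed one. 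The three error sources — the $O(k^{1/2})$ in the rational factor, the $O(k^{-1/2})$ in $F_\varphi$, and the $O(k^{-1})$ in the volume form — each contribute $O(k^{-n+1/2})$ or better after multiplication with the leading rational factor and the Jacobian $k^{-n}$.

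The main obstacle is the error control in the rational factor: the Taylor expansion produces additive errors of the same order as the leading numerator and denominator, so naively dividing would give only a trivial bound. The key is that Corollary~\ref{positivity2} gives a uniform lower bound on $2f^2 + k^{-1}|\dd f|_\omega^2$ of the exact order $k^{-1}$, which is what allows the additive Taylor errors to be converted into relative errors of size $O(k^{-1/2})$, producing the claimed $O(k^{-n+1/2})$ overall remainder. Everything else amounts to routine Taylor expansion, the change-of-variables formula, and bookkeeping on the ball $B_{\R^{2n}}(0,R)$ whose volume is finite and independent of $k$.
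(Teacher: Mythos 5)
Your proof is correct, and it takes a genuinely different route from the paper's. The paper works in \emph{holomorphic} normal coordinates at $x$ (with a unitary rotation aligning $\dd f(x)$), writes $f = x_1 g_1 + O(|z|^2)$ via Hadamard's lemma, and then must deal with the fact that the geodesic ball is not exactly the coordinate ball: it resorts to a squeezing argument, sandwiching $B(x,R/\sqrt k)$ between Euclidean balls of radii $R k^{-1/2}(1\pm c k^{-1/2})$. You instead work in \emph{geodesic} normal coordinates with an orthogonal rotation, which buys you two simplifications: (i) the geodesic ball is exactly the Euclidean ball of the same radius in these coordinates (so no squeezing is needed — your ``$O(k^{-1})$ correction on its boundary'' is in fact zero, a minor over-cautiousness on your part); (ii) a plain first-order Taylor expansion of $f$ at the origin replaces Hadamard's lemma. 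The price is that you leave the realm of holomorphic coordinates, but this costs nothing here because every tensor that actually enters the final computation — $F_\varphi$, $\omega^n/n!$, $|\dd f|_\omega$ — is defined intrinsically; the only coordinate computations needed are the scaling Jacobian $k^{-n}$ and Taylor expansions of scalar functions. Your handling of the rational factor is also correct: the crucial point, which you identify, is that Corollary~\ref{positivity2} provides the $\geq c k^{-2}$ lower bound on the exact denominator, while transversality gives the same lower bound on its leading approximation, and together these allow division to produce the $O(k^{1/2})$ relative error. Altogether this is a cleaner organization of essentially the same analysis, and all three error sources you tabulate do indeed contribute $O(k^{-n+1/2})$ or better.
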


\begin{proof}
Let $x \in f^{-1}(0)$ and let
\begin{equation} \label{eq:integralIk}
 I_k = i \int_{B(x,\frac{R}{\sqrt{k}})} \frac{k^{-1}\abs{\mathrm{d}f}_{\omega}^{2} - 2 f^2}{(2 f^2 + k^{-1}\abs{\mathrm{d}f}_{\omega}^{2})^2} \partial f\wedge\bar{\partial}f \wedge \varphi 
 \end{equation}
be the integral that we want to estimate. Let $z_1 = x_1 + i y_1, \ldots, z_n = x_n + i y_n$ be normal holomorphic coordinates at $x$, defined on some open set $U$ (and, thus, on the ball $B(x,\frac{R}{\sqrt{k}})$  for any $k$ large enough). 
  Recall that these normal holomorphic coordinates at $x$ have the property that 
\begin{equation} \omega = \frac{i}{2} \sum_{\ell,m=1}^n G_{\ell,m} \dd z_{\ell} \wedge \dd\bar{z}_m  \label{eq:normal} \end{equation}
with $(G_{\ell,m})_{1 \leq \ell,m \leq n} =  \mathrm{Id} + O(|z|^2)$. 

Since a unitary linear map sends normal holomorphic coordinates to normal holomorphic coordinates, we may assume that $\frac{\partial f}{\partial x_1}(0) \neq 0$ and that the kernel of $\dd f(x)$ is $\text{Span}(\partial_{y_1}, \partial_{x_2}, \ldots, \partial_{y_n} )$. By Hadamard's lemma, this implies that there exists smooth functions $g_1, \ldots, g_{2n}$ such that 
\[ f(x_1, y_1, \ldots, x_n, y_n) = \sum_{\ell=1}^n ( x_{\ell} g_{\ell}(x_1, y_1, \ldots, x_n, y_n) +  y_{\ell} g_{n + \ell}(x_1, y_1, \ldots, x_n, y_n)  ) \]
for every $z=(x_1, y_1, \ldots, x_n, y_n)$ and $g_2(0) = \ldots = g_{2n}(0) = 0$. Hence, on $B(x,\frac{R}{\sqrt{k}})$ we have
\begin{equation} f(x_1, y_1, \ldots, x_n, y_n) = x_1 g_1(x_1, y_1, \ldots, x_n, y_n) + O(|z|^2). \label{eq:hadamard} \end{equation}
On the ball $B(x,\frac{R}{\sqrt{k}})$, we also have the estimate
\begin{equation} |\dd f(z)|_\omega^2 =  \abs{\frac{\partial f}{\partial x_1}(z)}^{2} + O(|z|^2)  \label{eq:abs_df} \end{equation}
because of the definition of $|\cdot|_\omega$ (see Equation \eqref{eq:norme}) and Equation \eqref{eq:normal}. Using Equation \eqref{eq:abs_df} we then obtain that
\[  \frac{k^{-1}\abs{\mathrm{d}f}_\omega^{2} - 2 f^2}{(2 f^2 + k^{-1}\abs{\mathrm{d}f}_\omega^{2})^2} =   \frac{ k^{-1}  \abs{\frac{\partial f}{\partial x_1}}^{2} - 2 f^2}{\left(2 f^2 + k^{-1} \abs{\frac{\partial f}{\partial x_1}}^{2}\right)^2} + O(k |z|^2),  \]
where, in the denominator, we have used that $\left(2 f^2 + k^{-1} \abs{\frac{\partial f}{\partial x_1}}^{2}\right)^2 \geq c k^{-2}$, see Corollary \ref{positivity2}. So we obtain that the integral \eqref{eq:integralIk} that we want to estimate is equal to
\begin{equation}\label{eq:integralIk2}
 I_k  = i \int_{B_k} \frac{k^{-1}  \abs{\frac{\partial f}{\partial x_1}}^{2} - 2 f^2}{\left(2 f^2 + k^{-1} \abs{\frac{\partial f}{\partial x_1}}^{2}\right)^2} \partial f\wedge\bar{\partial}f \wedge \varphi  + i \int_{B_k} O(k |z|^2) \partial f\wedge\bar{\partial}f \wedge \varphi 
 \end{equation}
where $B_k$ denotes the ball $B(x,\frac{R}{\sqrt{k}})$ in the coordinates $z_1, \ldots, z_n$. Since in these coordinates, the Riemannian metric is the standard metric up to $O(|z|^2)$, there exists $c > 0$ such that $B_{\R^{2n}}(0,\frac{R}{\sqrt{k}} (1  - \frac{c}{\sqrt{k}}) ) \subset B_k \subset B_{\R^{2n}}(0,\frac{R}{\sqrt{k}} (1  + \frac{c}{\sqrt{k}}))$.  The second integral on the right-hand side of Equation \eqref{eq:integralIk2} can then be estimated as 
\[ i \int_{B_k} O(k |z|^2) \partial f\wedge\bar{\partial}f \wedge \varphi = O(1) \text{vol}\left( B_{\R^{2n}}(0,\frac{R}{\sqrt{k}} ) \right) = O(k^{-n}) \]
and moreover for the first integral on the right-hand side of Equation  \eqref{eq:integralIk2} we have
\begin{equation} J_k(-c) \leq i \int_{B_k} \frac{k^{-1}  \abs{\frac{\partial f}{\partial x_1}}^{2} - 2 f^2}{\left(2 f^2 + k^{-1} \abs{\frac{\partial f}{\partial x_1}}^{2}\right)^2} \partial f\wedge\bar{\partial}f \wedge \varphi  \leq J_k(c) \label{eq:gendarmes} \end{equation}
with 
\begin{equation}\label{eq:integralJk}
J_k(\pm c) = i \int_{B_{\R^{2n}}(0,\frac{R}{\sqrt{k}} (1  \pm\frac{c}{\sqrt{k}})  )} \frac{k^{-1}  \abs{\frac{\partial f}{\partial x_1}}^{2} - 2 f^2}{\left(2 f^2 + k^{-1} \abs{\frac{\partial f}{\partial x_1}}^{2}\right)^2} \partial f\wedge\bar{\partial}f \wedge \varphi.  
\end{equation}
Now on $U$, thanks to Equation \eqref{eq:normal}, we have the estimate
\[ i \partial f\wedge\bar{\partial}f \wedge \varphi = F_{\varphi} \frac{\omega^n}{n!} =  F_{\varphi}  \dd x_1 \wedge \dd y_1 \ldots \dd x_n \wedge \dd y_n  \left( 1 + O(|z|^2)  \right),  \]
 which, put into Equation \eqref{eq:integralJk}, gives us
\[ J_k(c)  =  \int_{B_{\R^{2n}}(0,\frac{R}{\sqrt{k}} (1  + \frac{c}{\sqrt{k}})  )} \frac{k^{-1} \abs{\frac{\partial f}{\partial x_1}(z) }^{2} - 2 f(z)^2}{\left( 2 f(z)^2 + k^{-1} \abs{\frac{\partial f}{\partial x_1}(z)}^{2}\right)^2} F_{\varphi}(z) \left( 1 + O(|z|^2) \right) \dd \lambda(z)   \]
with $\dd\lambda = \dd x_1 \dd y_1 \ldots \dd x_n \dd y_n$. The change of variables $w = z \sqrt{k}$ yields
\[ J_k(c) =  k^{-n} \int_{D_k} \frac{k^{-1} \abs{\frac{\partial f}{\partial x_1}(k^{-\frac{1}{2}} w) }^{2} - 2 f(k^{-\frac{1}{2}} w)^2}{\left(2 f(k^{-\frac{1}{2}} w)^2 + k^{-1} \abs{\frac{\partial f}{\partial x_1}(k^{-\frac{1}{2}} w)}^{2}\right)^2} F_{\varphi}(k^{-\frac{1}{2}} w) \left( 1 + O(k^{-1} |w|^2) \right) \dd \lambda(w), \]
where $D_k$ denotes the Euclidian ball   $B_{\R^{2n}}(0, R (1  + \frac{c}{\sqrt{k}}) )$.

Since by Equation \eqref{eq:hadamard},
\[ f(k^{-\frac{1}{2}} w)^2 = k^{-1}  t_1^2 g_1(k^{-\frac{1}{2}} w)^2 + O(k^{-\frac{3}{2}} |w|^2)  \]
this gives
\begin{equation}\label{eq:integralJk2}
 J_k(c) =  k^{-n+1} \int_{D_k} \frac{\abs{\frac{\partial f}{\partial x_1}(k^{-\frac{1}{2}} w) }^{2} - 2 t_1^2 g_1(k^{-\frac{1}{2}} w)^2}{\left(2 t_1^2 g_1(k^{-\frac{1}{2}} w)^2 +  \abs{\frac{\partial f}{\partial x_1}(k^{-\frac{1}{2}} w)}^{2}\right)^2} F_{\varphi}(k^{-\frac{1}{2}} w) \left( 1 + O(k^{-\frac{1}{2}} |w|^2) \right) \dd \lambda(w).  
 \end{equation}
We now treat the function inside the integral appearing in Equation \eqref{eq:integralJk2}.
By Taylor's formula, we have
\[  \frac{\abs{\frac{\partial f}{\partial x_1}(k^{-\frac{1}{2}} w) }^{2} - 2 t_1^2 g_1(k^{-\frac{1}{2}} w)^2}{\left(2 t_1^2 g_1(k^{-\frac{1}{2}} w)^2 +  \abs{\frac{\partial f}{\partial x_1}(k^{-\frac{1}{2}} w)}^{2}\right)^2} F_{\varphi}(k^{-\frac{1}{2}} w) =  \frac{\abs{\frac{\partial f}{\partial x_1}(0)}^{2} - 2 t_1^2 g_1(0)^2}{\left( 2 t_1^2 g_1(0)^2 + \abs{\frac{\partial f}{\partial x_1}(0)}^{2}\right)^2} F_{\varphi}(0) + O(k^{-\frac{1}{2}} |w|). \]
Putting the latter in \eqref{eq:integralJk2}, using the fact that $\mathrm{Vol}(D_k \setminus B_{\R^{2n}}(0,R)) = O(k^{-\frac{1}{2}})$, the equality $\frac{\partial f}{\partial x_1}(0) = g_1(0)$ and Equation \eqref{eq:abs_df},
we finally obtain that
\[ J_k(c) = k^{-n+1} \frac{ F_{\varphi}(0)}{ |\dd f(0)|_{\omega}^2} \int_{B_{\R^{2n}}(0,R)}  \frac{1 - 2 t_1^2}{(1 + 2 t_1^2)^2}\mathrm{d}t_1\dots \mathrm{d}t_{2n} +O(k^{-n+\frac{1}{2}}).  \]
Since the same holds for $J_k(-c)$ (by the same reasoning), we conclude thanks to Equation \eqref{eq:gendarmes}.
\end{proof} 

To conclude the proof of Theorem \ref{thm:error_term_in_k}, we need to compute explicitly the integral in the above lemma.

\begin{prop}
\label{prop:calcul_integrale}
For every $R \geq 0$,
\[  \int_{B_{\R^{2n}}(0,R)}\frac{1 - 2 t_1^2}{(1 + 2 t_1^2)^2}\mathrm{d}t_1\dots \mathrm{d}t_{2n} = \frac{2^{n-1} \pi^n (n-1)!}{(2n-2)!} \left( P_n(2 R^2) - (1 + 2 R^2)^{n - \frac{3}{2}} \right)  \]
where $P_n$ is the Taylor polynomial of order $n-1$ of $g_n: x \mapsto (1 + x)^{n - \frac{3}{2}}$ at $x=0$. More explicitly, 
\[ P_n(X) = \sum_{\ell = 0}^{n-1} \binom{n - \frac{3}{2}}{\ell} X^{\ell}, \quad \binom{\alpha}{\ell}  = \frac{\alpha (\alpha - 1) \ldots (\alpha - \ell + 1)}{\ell!} \text{ for } \ell \in \N_{> 0}, \quad \binom{\alpha}{0} = 1. \]
Moreover, for every $R > 0$, 
\[  \int_{B_{\R^{2n}}(0,R)}\frac{1 - 2 t_1^2}{(1 + 2 t_1^2)^2}\mathrm{d}t_1\dots \mathrm{d}t_{2n} > 0. \]
\end{prop}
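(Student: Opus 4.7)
The plan is to compute $I(R) := \int_{B_{\R^{2n}}(0,R)}\frac{1-2t_1^2}{(1+2t_1^2)^2}\,\dd\lambda$ in closed form by reducing it to a one-dimensional trigonometric integral, extracting the $(1+2R^2)^{n-3/2}$-contribution via a polynomial division, and identifying the remaining polynomial part by a short Taylor-expansion argument at $R=0$. Strict positivity then follows cleanly from Taylor's remainder.

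First I would slice orthogonally to the $t_1$-axis using Fubini:
\[ I(R) = \Vol\bigl(B_{\R^{2n-1}}(0,1)\bigr)\int_{-R}^R \frac{1-2t_1^2}{(1+2t_1^2)^2}(R^2-t_1^2)^{(2n-1)/2}\,\dd t_1. \]
Since $\frac{1-2t_1^2}{(1+2t_1^2)^2} = \partial_{t_1}\!\bigl(\frac{t_1}{1+2t_1^2}\bigr)$, an integration by parts (the boundary terms vanish as $(R^2-t_1^2)^{(2n-1)/2}=0$ at $t_1=\pm R$), the substitution $t_1 = R\sin\theta$, and the identity $\frac{2R^2\sin^2\theta}{1+2R^2\sin^2\theta} = 1-\frac{1}{1+2R^2\sin^2\theta}$ rewrite $I(R)$ as $c_n R^{2n-2}[W_{n-1}-J_n(R)]$ for an explicit constant $c_n$, where $W_{n-1}:=\int_0^{\pi/2}\cos^{2n-2}\theta\,\dd\theta$ is the Wallis integral and $J_n(R):=\int_0^{\pi/2}\frac{\cos^{2n-2}\theta}{1+2R^2\sin^2\theta}\,\dd\theta$. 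To evaluate $J_n(R)$, I would use the algebraic identity $1-u=\frac{1+2R^2}{2R^2}-\frac{1+2R^2u}{2R^2}$ (with $u=\sin^2\theta$), raise it to the $(n-1)$-st power, and divide by $1+2R^2u$: this decomposes $\frac{\cos^{2n-2}\theta}{1+2R^2\sin^2\theta}$ as the sum of $\frac{(1+2R^2)^{n-1}}{(2R^2)^{n-1}}\cdot\frac{1}{1+2R^2\sin^2\theta}$ and a polynomial in $(1+2R^2\sin^2\theta)$ whose coefficients are polynomials in $R^2$. Integrating the first summand via the classical formula $\int_0^{\pi/2}\frac{\dd\theta}{1+2R^2\sin^2\theta}=\frac{\pi/2}{\sqrt{1+2R^2}}$ produces the $(1+2R^2)^{n-3/2}$ contribution, while the remaining summands integrate (via Wallis-type integrals) to a polynomial in $R^2$ of degree $\leq n-2$.

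A routine constant-chasing (using $\Vol(B_{\R^{2n-1}}(0,1))=\frac{\pi^{n-1/2}}{\Gamma(n+1/2)}$ and $\Gamma(n+1/2)=\frac{2n-1}{2}\Gamma(n-1/2)$) shows that the coefficient of $(1+2R^2)^{n-3/2}$ in $I(R)$ is exactly $-\frac{2^{n-1}\pi^n(n-1)!}{(2n-2)!}$, giving
\[ I(R) = -\tfrac{2^{n-1}\pi^n(n-1)!}{(2n-2)!}(1+2R^2)^{n-3/2} + Q(R^2) \]
for some polynomial $Q$ of degree $\leq n-1$ in $R^2$. To identify $Q$, I would observe that $I(R)$ is smooth in $R^2$ with $I(R)=\tfrac{\pi^n}{n!}R^{2n}+O(R^{2n+2})$ as $R\to 0$ (since the integrand equals $1$ at the origin and $\Vol(B_{\R^{2n}}(0,R))=\frac{\pi^n R^{2n}}{n!}$). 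Expanding $(1+2R^2)^{n-3/2}=P_n(2R^2)+\binom{n-3/2}{n}(2R^2)^n+O(R^{2n+2})$, and checking by direct computation that $\frac{2^{n-1}\pi^n(n-1)!}{(2n-2)!}\binom{n-3/2}{n}(2R^2)^n=-\frac{\pi^n R^{2n}}{n!}$, one gets $Q(R^2)-\frac{2^{n-1}\pi^n(n-1)!}{(2n-2)!}P_n(2R^2)=O(R^{2n+2})$; since the left-hand side is a polynomial of degree $\leq n-1$ in $R^2$, it must vanish, which yields the claimed identity.

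For strict positivity, I would invoke Taylor's remainder in integral form: with $g(s)=(1+s)^{n-3/2}$, one has $P_n(x)-(1+x)^{n-3/2}=-\frac{1}{(n-1)!}\int_0^x (x-s)^{n-1}g^{(n)}(s)\,\dd s$, and a direct computation gives $g^{(n)}(s)=(n-\tfrac{3}{2})(n-\tfrac{5}{2})\cdots(\tfrac{1}{2})(-\tfrac{1}{2})(1+s)^{-3/2}$, a product of half-integers containing exactly one negative factor, namely $-\tfrac{1}{2}$. Hence $g^{(n)}(s)<0$ for $s>-1$, and the integral is strictly positive for $x=2R^2>0$. The main technical obstacle is the bookkeeping of constants in the polynomial-division step, but this is purely mechanical.
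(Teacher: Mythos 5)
Your proposal is correct, and it takes a genuinely different route from the paper. Both proofs begin by reducing the $2n$-dimensional integral to a one-dimensional one (you slice perpendicular to the $t_1$-axis, integrate by parts using $\frac{1-2t_1^2}{(1+2t_1^2)^2}=\partial_{t_1}\bigl(\frac{t_1}{1+2t_1^2}\bigr)$, and substitute $t_1=R\sin\theta$; the paper passes through spherical coordinates and a further change of variables). From there the paper expresses the result in terms of the hypergeometric function $F(1,\frac{3}{2};n+1;-R^2)$ via Euler's integral representation, simplifies using contiguity relations from the NIST handbook, and finally identifies this with Taylor's remainder of $(1+x)^{n-3/2}$ by recognizing the same hypergeometric function on both sides. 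Your approach avoids hypergeometric machinery altogether: you extract the $(1+2R^2)^{n-3/2}$ contribution from $J_n(R)=\int_0^{\pi/2}\frac{\cos^{2n-2}\theta}{1+2R^2\sin^2\theta}\,\dd\theta$ by the elementary decomposition $1-u=\frac{1+2R^2}{2R^2}-\frac{1+2R^2u}{2R^2}$ raised to the $(n-1)$-st power, then identify the leftover degree $\le n-1$ polynomial in $R^2$ not by computing it term by term but by matching the Taylor expansion of the left-hand side at $R=0$ (which is just $\frac{\pi^n}{n!}R^{2n}+O(R^{2n+2})$, coming from $\Vol(B_{\R^{2n}}(0,R))$). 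I checked the key coefficient identity $\binom{n-3/2}{n}=-\frac{(2n-2)!}{2^{2n-1}(n-1)!\,n!}$ and the constant chase for the coefficient $-\frac{2^{n-1}\pi^n(n-1)!}{(2n-2)!}$ of $(1+2R^2)^{n-3/2}$: both hold. Your positivity argument via the integral form of Taylor's remainder is essentially identical to the paper's. What your route buys is elementarity and transparency; what the paper's route buys is the observation, which the authors flag as a surprise, that the answer has a clean hypergeometric form. One small phrasing caveat: the "remaining summands" of $J_n(R)$ are rational functions of $R^2$ (with $(2R^2)^{n-1}$ in the denominator), not polynomials; they become a polynomial of degree $\le n-2$ only after multiplying by the prefactor $R^{2n-2}$, which is what you implicitly rely on, and is correct.
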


\begin{proof}
The change of variables $u = t  \sqrt{2}$ yields that the integral that we want to compute equals $2^{-n} I_n(R \sqrt{2})$ with
\[ I_n(R) = \int_{B_{\R^{2n}}(0,R)}\frac{1 - u_1^2}{(1 + u_1^2)^2}\mathrm{d}u_1\dots \mathrm{d}u_{2n} \]
Using spherical coordinates (in principle we need to assume that $n \geq 2$ for the rest of the proof, but for the case $n=1$ everything works in a similar way), we write
\[ I_n(R) = \int_{D} \frac{1 - r^2 \cos^2 \theta_1}{(1 + r^2 \cos^2 \theta_1)^2} r^{2n-1} \sin^{2n-2} \theta_1 \sin^{2n-3} \theta_2 \ldots \sin \theta_{2n-2} \ \dd r \ \dd\theta_1 \ldots \dd\theta_{2n-1}.   \]
where $D = [0,R] \times ]0,\pi[^{2n-2} \times ]0,2\pi[$. Hence we obtain that 
\begin{equation} I_n(R) = 2^{2n-1} J_n(R) \prod_{\ell = 0}^{2n-3} W_{\ell} = 2^{2n-1} J_n(R) \frac{\pi^{n-1} (n-1)!}{(2n-2)!} \label{eq:I_const} \end{equation}
with $W_{\ell}$ the $\ell$-th Wallis integral and 
\[ J_n(R) = \int_0^R \int_0^{\pi} \frac{1 - r^2 \cos^2 \theta}{(1 + r^2 \cos^2 \theta)^2} r^{2n-1} \sin^{2n-2} \theta \ dr \ \dd\theta. \]
So we are left with computing $J_n$. The change of variables $r = R u$ yields
\[ J_n(R) = R^{2n} \int_0^1 \int_0^{\pi} \frac{1 - R^2 u^2 \cos^2 \theta}{(1 + R^2 u^2 \cos^2 \theta)^2} r^{2n-1} \sin^{2n-2} \theta \ \dd u \ \dd\theta.  \]
But, if $D$ is the quarter of the unit disc in $\R^2$ contained in the upper-right quadrant, then 
\[  \int_0^1 \int_0^{\pi} \frac{1 - R^2 u^2 \cos^2 \theta}{(1 + R^2 u^2 \cos^2 \theta)^2} u^{2n-2} \sin^{2n-2} \theta \ u \ \dd u \ \dd\theta = 2 \int_{D} \frac{1 - R^2 t_1^2}{(1 + R^2 t_1^2)^2} t_2^{2n-2} \ \dd t_1 \ \dd t_2  \]
so we obtain by writing $D = \{ (t_1,t_2) \ | \ 0 \leq t_1 \leq 1, 0 \leq t_2 \leq \sqrt{1 - t_1^2} \}$ and by integrating with respect to $t_2$ that
\[  J_n(R) = \frac{2 R^{2n}}{2n-1} \int_0^1 \frac{1 - R^2 t_1^2}{(1 + R^2 t_1^2)^2} (1 - t_1^2)^{\frac{2n-1}{2}} \ \dd t_1.  \]
The change of variables $u = t_1^2$ yields
\[ \begin{split}  J_n(R) & = \frac{R^{2n}}{2n-1} \int_0^1 \frac{(1 - R^2 u) u^{-\frac{1}{2}} (1 - u)^{\frac{2n-1}{2}}}{(1 + R^2 u)^2}  \ \dd u \\
& = \frac{R^{2n}}{2n-1} \left(  \int_0^1 \frac{u^{-\frac{1}{2}} (1 - u)^{\frac{2n-1}{2}}}{(1 + R^2 u)^2}  \ \dd u - R^2 \int_0^1 \frac{u^{\frac{1}{2}} (1 - u)^{\frac{2n-1}{2}}}{(1 + R^2 u)^2}  \ \dd u  \right). \end{split} \]
Using Equations $(15.6.1)$ and $(15.1.2)$ in \cite{NIST}, we can rewrite these integrals in terms of the hypergeometric function $F = {_2F_1}$:
\[ J_n(R)  = \frac{R^{2n} \Gamma(n+ \frac{1}{2})}{2n-1} \left( \frac{\Gamma(\frac{1}{2}) }{n!} F\left(2, \frac{1}{2}; n+1; -R^2\right)  -  \frac{\Gamma(\frac{3}{2}) R^2 }{(n+1)!} F\left(2, \frac{3}{2}; n+2; -R^2\right) \right) \]
and with the standard explicit expressions for $\Gamma$ at half-integers we obtain that 
\begin{equation} J_n(R) = \frac{R^{2n} (2n-2)! \pi }{ 2^{2n - 1} n! (n-1)!} K_n(R) \label{eq:J_hypergeo} \end{equation}
where
\begin{equation} K_n(R) =  F\left(2, \frac{1}{2}; n+1; -R^2\right)  -  \frac{R^2}{2(n+1)} F\left(2, \frac{3}{2}; n+2; -R^2\right). \label{eq:K_hypergeo} \end{equation}
Now, Equation $(15.5.15)$ in \cite{NIST} with $a= 1$, $b = \frac{3}{2}$, $c = n + 2$ and $z = -R^2$ yields
\[ F\left(2, \frac{3}{2}; n+2; -R^2\right) = (n+1) F\left(1, \frac{3}{2}; n+1; -R^2\right) - n F\left(1, \frac{3}{2}; n+2; -R^2\right)  \]
and Equation $(15.5.16)$ in \cite{NIST} (together with the fact that $F$ is symmetric with respect to $a$ and $b$) with $a = 1$, $b = \frac{3}{2}$, $c = n + 1$ and $z = -R^2$ gives
\[ n R^2 F\left(1, \frac{3}{2}; n+2; -R^2\right) = (n+1) \left( (1 + R^2) F\left(1, \frac{3}{2}; n+1; -R^2\right) -  F\left(1, \frac{1}{2}; n + 1; -R^2\right) \right).  \]
After injecting these two equations in \eqref{eq:K_hypergeo} and simplifying, we get
\[ K_n(R) = F\left(2, \frac{1}{2}; n+1; -R^2\right)  -  \frac{1}{2} F\left(1, \frac{1}{2}; n + 1; -R^2\right) + \frac{1}{2} F\left(1, \frac{3}{2}; n + 1; -R^2\right)  \]
which reduces, using Equation $(15.5.12)$ in \cite{NIST} with $a = 1$, $b = \frac{1}{2}$, $c = n + 1$ and $z = -R^2$, to
\[ K_n(R) = F\left(1, \frac{3}{2}; n + 1; -R^2\right).  \]
Substituting this in Equation \eqref{eq:J_hypergeo}, we finally obtain that 
\begin{equation} J_n(R)  = \frac{R^{2n} (2n-2)! \pi }{ 2^{2n - 1} n! (n-1)!} F\left(1, \frac{3}{2}; n + 1; -R^2\right). \label{eq:J_final} \end{equation}

We claim that this gives the desired result. In order to prove this, we use the integral expression for the remainder in Taylor's formula to write
\[ P_n(R^2) - (1 + R^2)^{n - \frac{3}{2}} = - \frac{R^{2n}}{(n-1)!} \int_0^1 (1 - t)^{n-1} g_n^{(n)}(R^2 t) \ \dd t. \]
But one readily checks that 
\[ \forall x \geq 0 \qquad g_n^{(n)}(x) = -\frac{(2n-2)!}{2^{2n-1} (n-1)!} (1 + x)^{-\frac{3}{2}}, \]
so the above equation becomes
\begin{equation} P_n(R^2) - (1 + R^2)^{n - \frac{3}{2}} = \frac{R^{2n} (2n-2)!}{2^{2n-1} (n-1)!^2} \int_0^1 \frac{(1 - t)^{n-1}}{(1 + R^2 t)^{\frac{3}{2}}} \ \dd t. \label{eq:reste_taylor} \end{equation}
Using once again Equations $(15.6.1)$ and $(15.1.2)$ in \cite{NIST}, we see after simplification that 
\[ P_n(R^2) - (1 + R^2)^{n - \frac{3}{2}} = \frac{R^{2n} (2n-2)!}{2^{2n-1} n! (n-1)!} F\left(1, \frac{3}{2}; n + 1; -R^2\right).  \]
Comparing this with Equation \eqref{eq:J_final} gives
\[ J_n(R) = \pi \left( P_n(R^2) - (1 + R^2)^{n - \frac{3}{2}} \right)  \]
and we conclude using Equation \eqref{eq:I_const} that
\[ I_n(R) =  \frac{2^{2n-1} \pi^{n} (n-1)!}{(2n-2)!}  \left( P_n(R^2) - (1 + R^2)^{n - \frac{3}{2}} \right), \]
as desired.

The fact that $\int_{B_{\R^{2n}}(0,R)}\frac{1 - 2 t_1^2}{(1 + 2 t_1^2)^2}\mathrm{d}t_1\dots \mathrm{d}t_{2n}$ is positive for every $R > 0$ is clear from the expression given in Equation \eqref{eq:reste_taylor}.
\end{proof}

We are now able to prove Theorem \ref{thm:error_term_in_k} and Corollary \ref{cor:univ}.

\begin{proof}[Proof of Theorem \ref{thm:error_term_in_k}]
Recall that, as currents,
\[ \Phi_{T_k}^*\omega_{FS} - k \omega = i \partial \bar{\partial} \log B_k, \]
so we have
\begin{equation}\label{in proof of error term 1}
\langle\left( \Phi_{T_k}^*\omega_{FS} - k \omega \right),\varphi_{x,R,k}\rangle = i\langle\partial \bar{\partial} \log B_k,\varphi_{x,R,k}\rangle.
\end{equation}
By Lemma \ref{lemme sur tout M} and by \eqref{in proof of error term 1} we have that
\begin{equation}\label{in proof of error term 2}
\langle\left( \Phi_{T_k}^*\omega_{FS} - k \omega \right),\varphi_{x,R,k}\rangle=i\langle\partial\bar{\partial}\log (f^2+k^{-1}b_1),\varphi_{x,R,k}\rangle +O(k^{-n-1}).
\end{equation}
The result then follows from Equation \eqref{in proof of error term 2}, Lemma \ref{lemma sur les zeros} and Lemma \ref{lemma sur les zeros 2}.
\end{proof}

\begin{proof}[Proof of Corollary \ref{cor:univ}]
In view of the statement, we need to prove that when $\varphi = \frac{\omega^{n-1}}{(n-1)!}$, the function $F_{\varphi}$ defined as $i \partial f \wedge \bar{\partial} f \wedge \varphi = F_{\varphi} \frac{\omega^n}{n!}$ satisfies
\[ F_{\varphi} = \frac{1}{2} |\dd f|_{\omega}^2. \]
As this is a pointwise property, we may assume that $(M,\omega) = (\C^n, \omega_{\C^n})$ with $\omega_{\C^n} = \frac{i}{2} \sum_{\ell = 1}^n \dd z_{\ell} \wedge \dd \bar{z}_{\ell}$. Then one readily checks that 
\[ i \partial f \wedge \bar{\partial} f \wedge \frac{\omega^{n-1}}{(n-1)!} = \frac{i^n}{2^{n-1}} \sum_{\ell = 1}^n \left( \frac{\partial f}{\partial z_{\ell}} \frac{\partial f}{\partial \bar{z}_{\ell}} \right) \dd z_1 \wedge \dd \bar{z}_1 \wedge \ldots \wedge  \dd z_n \wedge \dd \bar{z}_n  \]
and that
\[ \frac{\omega^n}{n!} = \frac{i^n}{2^n} \dd z_1 \wedge \dd \bar{z}_1 \wedge \ldots \wedge  \dd z_n \wedge \dd \bar{z}_n. \]
Then the result follows from the equality
\[ \sum_{\ell = 1}^n \frac{\partial f}{\partial z_{\ell}} \frac{\partial f}{\partial \bar{z}_{\ell}} = \frac{1}{2}  |\partial f|_{\omega}^2 = \frac{1}{4} |\dd f|_{\omega}^2 \]
see Equation \eqref{eq:norme} and Lemma \ref{lm:norme_df}.
\end{proof}

\subsection{Estimates outside the zero set}\label{subsec:outsidethezero}
 
In this section, we prove Theorem \ref{thm:error_term_in_k_2}. We keep the notation introduced at the beginning of Section \ref{sec:planck}.

\begin{lemma}\label{lemma loin des zeros} Let $\varphi$ be a smooth $(n-1,n-1)$-form and let $R>0$.
Then, for any $x\notin  f^{-1}(0)$, we have $$\langle\partial\bar{\partial}\log (f^2+k^{-1}b_1),\varphi_{x,R,k}\rangle=\int_{B(x,\frac{R}{\sqrt{k}})}\partial\bar{\partial}\log f^2\wedge\varphi+O(k^{-n-1})$$
 as $k \rightarrow + \infty$. 
\end{lemma}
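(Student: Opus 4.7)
The key observation is that away from the zero set of $f$, the quantity $f^2$ is bounded below by a positive constant, so the term $k^{-1}b_1$ is a genuine perturbation that can be handled by a simple Taylor expansion of the logarithm. The plan is as follows.

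First, since $f(x) \neq 0$, by continuity of $f^2$ there exists $c > 0$ (depending on $x$, e.g. $c = f(x)^2/2$) and an open neighborhood $U$ of $x$ on which $f^2 \geq c$. For $k$ large enough, $B(x,R/\sqrt{k}) \subset U$. On this ball we may then factor out $f^2$ and write
\[ \log(f^2 + k^{-1}b_1) = \log f^2 + \log\bigl(1 + k^{-1} b_1/f^2\bigr). \]
The function $u := k^{-1}b_1/f^2$ is smooth on $U$, and since $f^2 \geq c$ and $b_1$ is smooth and bounded on $U$ (independently of $k$, by Corollary \ref{secondterm}), we have $u = O(k^{-1})$ uniformly on $B(x,R/\sqrt{k})$, with all derivatives of $u$ up to any fixed order also uniformly $O(k^{-1})$ on this ball.

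Next, apply $\partial\bar\partial$ to the identity above. Using $\log(1+u) = u - u^2/2 + O(u^3)$ and computing $\partial\bar\partial$ of each term, one checks that $u\,\partial\bar\partial u$, $\partial u \wedge \bar\partial u$, and $\partial\bar\partial u$ are all $O(k^{-1})$ as smooth $(1,1)$-forms on $B(x,R/\sqrt{k})$ (the first two are in fact $O(k^{-2})$). Hence
\[ \partial\bar\partial \log(f^2 + k^{-1}b_1) - \partial\bar\partial \log f^2 = O(k^{-1}) \]
pointwise on $B(x,R/\sqrt{k})$, in the sense of coefficients with respect to any fixed local frame.

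Finally, pairing with $\varphi$ and integrating over the geodesic ball $B(x,R/\sqrt{k})$, whose volume is $O(k^{-n})$, yields
\[ \langle \partial\bar\partial \log(f^2+k^{-1}b_1),\varphi_{x,R,k}\rangle = \int_{B(x,R/\sqrt{k})} \partial\bar\partial \log f^2 \wedge \varphi + O(k^{-n-1}), \]
as claimed. There is no substantial obstacle here: the argument is essentially just a Taylor expansion plus a volume estimate, made possible by the fact that, contrary to the on-diagonal case of Lemma \ref{lemma sur les zeros}, the quantity $f^2$ dominates $k^{-1}b_1$ uniformly on the relevant ball.
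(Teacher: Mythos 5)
Your proposal is correct and takes essentially the same route as the paper: the paper's proof simply asserts the uniform estimate $\partial\bar\partial\log(f^2+k^{-1}b_1)=\partial\bar\partial\log f^2+O(k^{-1})$ on the ball and combines it with the volume bound $O(k^{-n})$. You merely spell out how that uniform estimate follows (factoring out $f^2$, Taylor-expanding $\log(1+u)$, and bounding the derivatives of $u=k^{-1}b_1/f^2$), which is exactly the implicit content of the paper's one-line argument.
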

\begin{proof}
The result follows directly from the uniform estimate 
\[\partial\bar{\partial}\log (f^2+k^{-1}b_1)=\partial\bar{\partial}\log f^2+O(k^{-1})\]
on $B(x,\frac{R}{\sqrt{k}})$ and from the volume estimate  $\mathrm{Vol}(\mathrm{Supp}\big(\varphi_{x,R,k})\big)=O(k^{-n})$.
\end{proof}

\begin{lemma}\label{lemma loin des zeros 2} Let $\varphi$ be a smooth $(n-1,n-1)$-form and let $R>0$.
Then, for any $x\notin  f^{-1}(0)$, we have 
\[ i\int_{B(x,\frac{R}{\sqrt{k}})}\partial\bar{\partial}\log f^2\wedge\varphi  = R^{2n} k^{-n} \mathrm{Vol}\big(B_{\R^{2n}}(0,1)\big) L_{\varphi}(x) + O(k^{-n-\frac{1}{2}})\]
as $k \rightarrow + \infty$, where $L_{\varphi}$ is the function defined by the equality $i\partial\bar{\partial}\log f^2\wedge\varphi = L_{\varphi} \frac{\omega^{n}}{n!}$.  
\end{lemma}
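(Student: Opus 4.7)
Since $x \notin f^{-1}(0)$ and $f$ is continuous, there is an open neighborhood $U$ of $x$ on which $f$ does not vanish; on $U$ the form $i\partial\bar{\partial}\log f^2 \wedge \varphi$ is smooth, so the function $L_{\varphi}$ defined by $i\partial\bar{\partial}\log f^2 \wedge \varphi = L_{\varphi} \tfrac{\omega^n}{n!}$ is smooth on $U$. The integral we must estimate is therefore simply $\int_{B(x,R/\sqrt{k})} L_{\varphi} \, \tfrac{\omega^n}{n!}$, where the integrand is a smooth density, which we will analyze by a change of variables exactly as in the proof of Lemma \ref{lemma sur les zeros 2}.

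The plan is as follows. First, I would choose normal holomorphic coordinates $z = (z_1,\dots,z_n)$ at $x$ on a small ball contained in $U$, so that $\omega = \tfrac{i}{2}\sum G_{\ell,m}\, dz_\ell \wedge d\bar z_m$ with $(G_{\ell,m}) = \mathrm{Id} + O(|z|^2)$; in particular $\tfrac{\omega^n}{n!} = (1 + O(|z|^2))\, d\lambda(z)$ where $d\lambda$ is Lebesgue measure, and the geodesic ball satisfies
\[ B_{\R^{2n}}\Bigl(0,\tfrac{R}{\sqrt{k}}(1 - \tfrac{c}{\sqrt k})\Bigr) \subset B\Bigl(x,\tfrac{R}{\sqrt k}\Bigr) \subset B_{\R^{2n}}\Bigl(0,\tfrac{R}{\sqrt{k}}(1 + \tfrac{c}{\sqrt k})\Bigr) \]
for some $c > 0$ and $k$ large. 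Hence
\[ \int_{B(x,R/\sqrt{k})} L_{\varphi} \tfrac{\omega^n}{n!} = \int_{B_{\R^{2n}}(0,R/\sqrt{k})} L_{\varphi}(z) (1 + O(|z|^2)) \, d\lambda(z) + E_k, \]
where $E_k$ is the integral over the symmetric difference between $B(x,R/\sqrt k)$ and $B_{\R^{2n}}(0,R/\sqrt k)$; since $L_{\varphi}$ is bounded near $x$ and this symmetric difference has volume $O(k^{-n-1/2})$, one has $E_k = O(k^{-n-1/2})$.

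Next, I would perform the rescaling $w = z\sqrt{k}$, which turns the integral into
\[ k^{-n} \int_{B_{\R^{2n}}(0,R)} L_{\varphi}(k^{-1/2} w)\, (1 + O(k^{-1}|w|^2))\, d\lambda(w), \]
and then Taylor-expand $L_{\varphi}(k^{-1/2} w) = L_{\varphi}(0) + O(k^{-1/2}|w|)$. Since the remaining integrals of $|w|$ and $|w|^2$ over $B_{\R^{2n}}(0,R)$ are bounded, this yields
\[ k^{-n} L_{\varphi}(x) \int_{B_{\R^{2n}}(0,R)} d\lambda(w) + O(k^{-n-1/2}) = R^{2n} k^{-n} \mathrm{Vol}(B_{\R^{2n}}(0,1)) L_{\varphi}(x) + O(k^{-n-1/2}), \]
which, after multiplying by $i$ from the front of $\partial\bar\partial \log f^2$, gives the claimed expression.

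There is no real obstacle in this argument: the crucial point is only that $\log f^2$ is smooth near $x$ (because $f(x) \neq 0$), so the integrand is a bounded smooth density, and the proof reduces to the same kind of normal-coordinate Taylor expansion and rescaling used in Lemma \ref{lemma sur les zeros 2}, but simpler since there is no near-singular denominator to control. The only bookkeeping to be careful with is the $O(k^{-n-1/2})$ error coming from the difference between the geodesic and Euclidean balls of radius $R/\sqrt k$ in normal coordinates, which matches the error claimed in the statement.
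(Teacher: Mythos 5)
Your proof is correct and follows essentially the same route as the paper: the paper Taylor-expands $L_{\varphi}$ around $x$ and then reduces to the volume asymptotics $\mathrm{Vol}(B(x,\frac{R}{\sqrt{k}})) = R^{2n}k^{-n}\mathrm{Vol}(B_{\R^{2n}}(0,1)) + O(k^{-n-\frac{1}{2}})$, which it obtains by the same normal-coordinate/rescaling argument you write out explicitly. The only cosmetic remark is that the factor $i$ is already absorbed into the definition of $L_{\varphi}$, so no extra multiplication by $i$ is needed at the end.
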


\begin{proof}  For any $z \in B(x,\frac{R}{\sqrt{k}})$,  we have
\[ L_{\varphi}(z) = L_{\varphi}(x) + O(|z-x|) = L_{\varphi}(x) + O(k^{-\frac{1}{2}}) \]
so that
\[ \begin{split} i \int_{B(x,\frac{R}{\sqrt{k}})}\partial\bar{\partial}\log f^2\wedge\varphi & = \left( L_{\varphi}(x) + O(k^{-\frac{1}{2}}) \right) \int_{B(x,\frac{R}{\sqrt{k}})}  \frac{\omega^{n}}{n!} \\
& =  \mathrm{Vol}\left(B\left(x,\frac{R}{\sqrt{k}}\right)\right) L_{\varphi}(x)+O(k^{-n-\frac{1}{2}}).  \end{split} \]
Now, we use the same argument as in the proof of Lemma \ref{lemma sur les zeros 2} to prove that 
\[ \mathrm{Vol}\left(B\left(x,\frac{R}{\sqrt{k}}\right)\right)= R^{2n} k^{-n} \mathrm{Vol}\big(B_{\R^{2n}}(0,1)\big) + O(k^{-n-\frac{1}{2}}).\]
Hence the result.
\end{proof}

\begin{proof}[Proof of Theorem \ref{thm:error_term_in_k_2}]
The proof follows the lines of the proof of Theorem \ref{thm:error_term_in_k}, using Lemmas \ref{lemma loin des zeros} and \ref{lemma loin des zeros 2} instead of Lemmas \ref{lemma sur les zeros} and \ref{lemma sur les zeros 2}.
\end{proof}

\section{Distribution of zeros of random sections and numerical simultations}\label{Sec:Random}

\subsection{Random sections of line bundles}

In this section we recall the setting of random algebraic geometry introduced in \cite{shiffman-zelditch}. We follow the notation of Section \ref{background}. In particular let $(L,h)$ be a holomorphic line bundle with positive curvature $\omega = i c_1(L,h)$ over a K\"ahler manifold $(M,\omega)$ and let $f$ be a smooth function on $M$.  
The $L^2$ Hermitian product constructed in Section \ref{framework} induces a Gaussian measure $\mu_k$ on $H^0(M,L^k)$ given by $\dd\mu_k(s)=\frac{1}{\pi^{N_k}}e^{-\norm{s}^2_{L^2}}\dd s$. Here $\dd s$ is the Lebesgue measure on $\big(H^0(M,L^k),\langle\cdot,\cdot \rangle_{L^2}\big)$ and $N_k=\dim H^0(M,L^k)$.
This Gaussian measure allows us to study the distribution-valued random variable 
\[ s\in H^0(M,L^k)\mapsto Z_{T_k s}\in\mathcal{D}^{1,1}(M). \]
 The expected value of this random variable is then defined by 
\begin{equation}\label{expecteddefinition}
\E\big[\langle Z_{T_k s},\varphi\rangle\big]=\int_{s\in H^0(M,L^k)}\left(\int_{T_k s=0}\varphi\right)\dd\mu_k(s).
\end{equation}
for any smooth $(n-1,n-1)-$form $\varphi$. In the case of $f=1$, Shiffman and Zelditch proved that
\begin{equation}\label{expectedfor1}
\frac{1}{k}\E\big[\langle Z_s,\varphi\rangle\big] = \frac{1}{2\pi} \int_M \omega \wedge \varphi +O(k^{-1}).
\end{equation}
We will obtain a similar result for $\E\big[Z_{T_k s}\big]$ by combining Theorem \ref{convergenceofcurrents} and the following standard result.

\begin{lemma}
\label{expectedforanyk}
Let $T_k$ be a Berezin-Toeplitz operator with principal symbol $f$. For any $k \in \N$, we have the following equality of currents:
\[ \E\big[Z_{T_k s}\big] = \frac{1}{2\pi} \Phi_{T_k}^*\omega_{FS}. \]
\end{lemma}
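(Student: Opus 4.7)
The plan is the classical one for expected zero currents: apply the Poincaré--Lelong formula to the holomorphic section $T_k s$, take expectations, and compare with Equation \eqref{equalofforms} (obtained at the start of the proof of Theorem \ref{convergenceofcurrents}), which identifies $k\omega + i\partial\bar{\partial}\log B_k$ with $\Phi_{T_k}^*\omega_{FS}$.

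\textbf{Step 1 (Poincaré--Lelong).} Assuming $T_k$ is not the zero operator, the subspace of $s$ such that $T_k s \equiv 0$ is a proper linear subspace of $H^0(M,L^k)$ and hence has Gaussian measure zero. For almost every $s$, $T_k s$ is then a non-zero holomorphic section of $L^k$. Since the Chern curvature of $(L^k,h^k)$ equals $-i k\omega$, Poincaré--Lelong gives the equality of currents
\[ Z_{T_k s} = \frac{k\omega}{2\pi} + \frac{i}{2\pi}\,\partial\bar{\partial}\log|T_k s|^2_{h^k}. \]
Pairing with an arbitrary smooth $(n-1,n-1)$-form $\varphi$ and exchanging $\E$ with $\int_M$ by Fubini, one gets
\[ \E\bigl[\langle Z_{T_k s},\varphi\rangle\bigr] = \frac{k}{2\pi}\int_M \omega\wedge\varphi + \frac{i}{2\pi}\int_M \E\bigl[\log|T_k s|^2_{h^k}\bigr]\,\partial\bar{\partial}\varphi. \]

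\textbf{Step 2 (Pointwise expectation).} Fix $x\in M$ and a local non-vanishing holomorphic frame $e_L$ of $L$ near $x$, and write $(T_k e_\ell)(x) = g_\ell(x)\,e_L^{\otimes k}$. Then $T_k s(x) = W(x)\,e_L^{\otimes k}$ with $W(x) = \sum_\ell \alpha_\ell g_\ell(x)$ a centered complex Gaussian of variance $\sigma^2(x) = \sum_\ell |g_\ell(x)|^2$. The standard identity $\E[\log|Z|^2] = -\gamma$ for $Z \sim \mathcal{N}_{\C}(0,1)$ (which follows from $\int_0^{\infty}(\log u) e^{-u}\,du = -\gamma$) yields
\[ \E\bigl[\log|T_k s(x)|^2_{h^k}\bigr] = \log \sigma^2(x) + k\log h_x(e_L,e_L) - \gamma. \]
Now Lemma \ref{sumbasis2} applied with $A = T_k$ evaluated on the diagonal gives
\[ B_k(x,x) = \sum_{\ell=1}^{N_k} |(T_k e_\ell)(x)|^2_{h^k} = \sigma^2(x)\,h_x(e_L,e_L)^k, \]
so $\E[\log|T_k s(x)|^2_{h^k}] = \log B_k(x,x) - \gamma$.

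\textbf{Step 3 (Conclusion).} The constant $\gamma$ is killed by $\partial\bar{\partial}$, hence
\[ \E[Z_{T_k s}] = \frac{k\omega}{2\pi} + \frac{i}{2\pi}\,\partial\bar{\partial}\log B_k = \frac{1}{2\pi}\bigl(k\omega + i\partial\bar{\partial}\log B_k\bigr) = \frac{1}{2\pi}\Phi_{T_k}^*\omega_{FS}, \]
the last equality being precisely Equation \eqref{equalofforms}. The one genuinely delicate point is the Fubini step in Step 1, which requires a uniform-in-$x$ $L^1$ control of $\log|W(x)|^2$. For $k$ large this is immediate from Corollary \ref{positivity}, which provides a uniform strictly positive lower bound on $\sigma^2(x)$; for smaller $k$ with $T_k \not\equiv 0$, $\sigma^2$ vanishes only on a proper analytic subset of $M$ and the standard argument still applies via the local integrability of $\log \sigma^2$.
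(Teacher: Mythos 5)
Your proof is correct and reaches the same conclusion by the same underlying mechanism (Poincar\'e--Lelong plus Fubini plus a Gaussian average of $\log|\cdot|$), but the route is packaged differently from the paper's. The paper works locally: it trivializes $L^k$ on a chart, writes the current of integration as $\frac{i}{\pi}\partial\bar{\partial}\log|\sum a_i f_i|$, factors $|\sum a_if_i| = |f|_2\,|\sum a_i u_i|$, and shows that the $u$-dependent part has vanishing $\partial\bar{\partial}$ because $\int_{S^{N_k-1}}\log|\sum\theta_iu_i|\,\dd\theta$ is independent of $u$ by unitary invariance of the sphere measure. You instead use the global, metric-dependent Poincar\'e--Lelong formula $Z_{T_ks} = \frac{k\omega}{2\pi} + \frac{i}{2\pi}\partial\bar\partial\log|T_ks|^2_{h^k}$ and compute the pointwise expectation exactly via $\E[\log|Z|^2]=-\gamma$, then identify $\E[\log|T_ks(x)|^2_{h^k}] = \log B_k(x,x)-\gamma$ through Lemma \ref{sumbasis2}. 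These are two faces of the same fact (the expected log-modulus of a complex Gaussian is the log-variance up to a universal constant), and your version makes that constant explicit and connects a bit more directly to Equation \eqref{equalofforms}. Your closing remark on the Fubini step is appropriate: for $k$ large, Corollary \ref{positivity} provides the uniform positive lower bound; for small $k$ with $T_k\neq 0$ one needs the local integrability of $\log\sigma^2$, which holds since $\sigma^2$ vanishes on a proper analytic subset; this is implicitly also needed in the paper's argument and both proofs are silent about it to roughly the same degree.
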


\begin{proof}
The proof follows the lines of \cite[Lemma 3.1]{shiffman-zelditch}, but we give it for the sake of completeness.
Let us fix an orthonormal basis $e_1, \dots, e_{N_k}$ of $H^0(M,L^k)$ and a local non-vanishing holomorphic section $e_L$ of $L$ defined on some open set $U\subset M$. On this open set $U$ we then have the equality $T_k e_i = f_ie_L^k$, for some holomorphic function $f_i$ defined on $U$. Thus, locally on $U$, the Kodaira map $\Phi_{T_k}$ can be read as $x\in U\mapsto (f_1(x),\dots,f_{N_k}(x))$ and the pull-back of the Fubini-Study form on $U$ equals 
\begin{equation}\label{fubini study equality}
\Phi_{T_k}^*\omega_{FS}\mid_{U}= i\partial\bar{\partial}\log \sum_{i=1}^{N_k}\abs{f_i}^2.
\end{equation} 

On the other hand, by the Poincar\'e-Lelong formula \cite[p. 388]{GH}, we have that the current defined by integration along the zero locus of a section $s=\sum_{i=1}^{N_k} a_i T_k s$ is (locally on $U$) equal to $\frac{i}{\pi}\partial\bar{\partial}\log \abs{\sum_{i=1}^{N_k}a_if_i}$.
We then have to prove that, for any smooth test $(n-1,n-1)$--form $\varphi$ with compact support in $U$, we have the following equality:
\begin{equation}\label{equality to prove}
\frac{i}{\pi}\int_{a\in\C^{N_k}}\int_{M}\partial\bar{\partial}\log \abs{\sum_{i=1}^{N_k}a_if_i}\wedge\varphi\mathrm{d}\mu_k = \frac{i}{2\pi} \int_M\partial\bar{\partial}\log \bigg(\sum_{i=1}^{N_k}\abs{f_i}^2\bigg)\wedge\varphi\mathrm{d}\mu_k.
\end{equation}
In order to prove this equality, let us denote by $\abs{f}_2$ the quantity $\bigg(\sum_{i=1}^{N_k}\abs{f_i}^2\bigg)^{\frac{1}{2}}$, so that $\abs{\sum_{i=1}^{N_k}a_if_i}$ equals $\abs{f}_2\abs{\sum_{i=1}^{N_k}a_iu_i}$, with $\sum_{i=1}^{N_k}\abs{u_i}^2=1$. The left-hand side of \eqref{equality to prove} is then equal to 
\begin{equation}\label{linearity}
\frac{i}{\pi}\int_{a\in\C^{N_k}}\int_{M}\partial\bar{\partial}\log \abs{\sum_{i=1}^{N_k}a_iu_i}\wedge\varphi\mathrm{d}\mu_k+\frac{i}{\pi}\int_{a\in\C^{N_k}}\int_M\partial\bar{\partial}\log \bigg(\sum_{i=1}^{N_k}\abs{f_i}^2\bigg)^{\frac{1}{2}}\wedge\varphi\mathrm{d}\mu_k.
\end{equation}
The function inside the integral in the second term of the sum \eqref{linearity} does not depend on $a\in\C^{N_k}$, so that
$$\frac{i}{\pi}\int_{a\in\C^{N_k}}\int_M\partial\bar{\partial}\log \bigg(\sum_{i=1}^{N_k}\abs{f_i}^2\bigg)^{\frac{1}{2}}\wedge\varphi \ \mathrm{d}\mu_k = \frac{i}{\pi}\int_M\partial\bar{\partial}\log \bigg(\sum_{i=1}^{N_k}\abs{f_i}^2\bigg)^{\frac{1}{2}}\wedge\varphi$$
which is the right-hand side of \eqref{equality to prove}. In order to prove the equality \eqref{equality to prove}, we then have to prove that the first term of the sum \eqref{linearity} is zero. In order to prove this, we use polar coordinates $a=r\theta$, for $r\in\R_+$ and $\theta=(\theta_1,\dots,\theta_{N_k})\in S^{N_k-1}$ and obtain 

\[ \begin{split} \frac{i}{\pi}\int_{a\in\C^{N_k}}\int_{M}\partial\bar{\partial}\log \abs{\sum_{i=1}^{N_k}a_iu_i}\wedge\varphi \ \mathrm{d}\mu_k & = \frac{i}{\pi}\int_{\theta\in S^{N_k-1}}\int_{M}\partial\bar{\partial}\log \abs{\sum_{i=1}^{N_k}\theta_iu_i}\wedge\varphi \ \mathrm{d}\mu_k\mathrm{d}\theta \\
& = \frac{i}{\pi}\int_{M}\partial\bar{\partial}\bigg(\int_{\theta\in S^{N_k-1}}\log \abs{\sum_{i=1}^{N_k}\theta_iu_i}\mathrm{d}\theta\bigg)\wedge\varphi \ \mathrm{d}\mu_k \\
& = 0 \end{split} \]
since the quantity $\int_{\theta\in S^{N_k-1}}\log \abs{\sum_{i=1}^{N_k}\theta_iu_i}\mathrm{d}\theta$ does not depend on $u$ for $\abs{u}=1$. Hence the result.
\end{proof}
As said in Section \ref{intro:random}, Theorem \ref{thm expectedvalue} and Theorem \ref{thm random error} follow from Lemma \ref{expectedforanyk} and from Theorems \ref{convergenceofcurrents}, \ref{thm:error_term}, \ref{thm:error_term_in_k} and \ref{thm:error_term_in_k_2}.

\subsection{Numerics}
\label{sec:numerics}

We conclude by illustrating Theorem \ref{thm random error} numerically. In order to do so, we investigate examples on the Riemann sphere; let us briefly recall the constructions in this context. For more details, see for instance \cite[Example 5.2.4, Example 7.2.5]{LFbook} and the references therein.

\paragraph{Notation.} We endow $(M, \omega) = (\C\Pp^1, \omega_{\text{FS}})$ with the line bundle $L = \mathcal{O}(1)$, equipped with the Hermitian metric $h$ which is dual to the metric on $\mathcal{O}(-1)$ coming from the standard Hermitian metric on $\C^2$. The curvature of $(\mathcal{O}(1),h)$ equals $-i \omega_{\text{FS}}$ with $\omega_{\text{FS}}$ the Fubini-Study form, normalized so that $\mathrm{Vol}(\C\Pp^1, \omega_{\text{FS}}) = 2\pi$. It is standard that for every $k \in \N$, there is a canonical isomorphism
\[ H^0(\C\Pp^1, \mathcal{O}(k)) \simeq \C_k^{\text{hom}}[z_0,z_1] \]
between the space of holomorphic sections of $\mathcal{O}(k) \to \C\Pp^1$ and the space of homogeneous polynomials of degree $k$ in two complex variables. An orthonormal basis for the $L^2$-Hermitian product obtained from this isomorphism is
\[ e_{\ell,k} = \sqrt{\frac{(k+1) \binom{k}{\ell}}{2\pi}} \ z_0^{\ell} z_1^{k-\ell}, \quad 0 \leq \ell \leq k. \]
So a random holomorphic section of $\mathcal{O}(k)$ will be of the form
\begin{equation} s_k = \sum_{\ell=0}^k \alpha_{\ell,k} e_{\ell,k}, \quad \alpha_{\ell,k} \sim \mathcal{N}_{\C}(0,1) \ \text{i.i.d.} \label{eq:rand_sect}\end{equation}
By considering the affine chart $\{ [z_0:z_1],  \ z_1 \neq 0 \}$ of $\C\Pp^1$ and the corresponding trivialization of $\mathcal{O}(1)$, we will work in the space $\C_k[z]$ of polynomials of degree at most $k$ in one complex variable, and our Berezin-Toeplitz operators will be differential operators with respect to $z$. Moreover, by symplectically identifying $(\C\Pp^1,\omega_{\text{FS}})$ with $(S^2,-\frac{1}{2} \omega_{S^2})$ where $\omega_{S^2}$ is the usual symplectic form given by
\[ (\omega_{S^2})_u(v,w) = \langle u, v \wedge w \rangle_{\R^3}, \quad u \in S^2, \ v, w \in T_u S^2,  \]
we work with symbols in $\mathscr{C}^{\infty}(S^2,\R)$. 

\paragraph{Sample mean.} In our simulations, we will consider $N$ independent random holomorphic sections $s_k^{(1)}, \ldots, s_k^{(N)} \in H^0(\C\Pp^1,\mathcal{O}(k))$ and compute the difference between the sample mean of the number of zeros of $T_k s_k$ contained in the geodesic ball $B(x,\frac{R}{\sqrt{k}})$ and $\frac{k}{2\pi} \mathrm{Vol}(B(x,\frac{R}{\sqrt{k}})$, around a point $x \in S^2$:
\begin{equation} \mathcal{E}(x,R,k,N) = \frac{1}{N} \sum_{m=1}^N \# \left( Z_{T_k s_k^{(m)}} \cap B(x,\frac{R}{\sqrt{k}}) \right) - k \left( 1 - \frac{1}{1 + \tan^2(\frac{R}{\sqrt{k}})} \right).  \label{eq:empirical} \end{equation}
Here we have used that
\[ \mathrm{Vol}\left(B\left(x,\frac{R}{\sqrt{k}}\right) \right) = 2\pi \left( 1 - \frac{1}{1 + \tan^2(\frac{R}{\sqrt{k}})} \right).  \]
For a fixed value of $k$, the random variable $ \# \left( Z_{T_k s_k^{(m)}} \cap B(x,\frac{R}{\sqrt{k}}) \right)$ is bounded, so by the law of large numbers $\mathcal{E}(x,R,k,N)$ converges almost surely towards 
\[ \mathbb{E}\left[\# \left( Z_{T_k s_k} \cap B(x,\frac{R}{\sqrt{k}}) \right)\right] - k \left( 1 - \frac{1}{1 + \tan^2(\frac{R}{\sqrt{k}})} \right) \]
as $N \to +\infty$. Recall that Theorem \ref{thm random error} applied to $\varphi = 1$ states that as $k \to +\infty$,
\begin{equation} \mathbb{E}\left[\# \left( Z_{T_k s_k} \cap B(x,\frac{R}{\sqrt{k}}) \right)\right] - k \left( 1 - \frac{1}{1 + \tan^2(\frac{R}{\sqrt{k}})} \right) = \frac{C_1(R)}{2\pi} +O(k^{-\frac{1}{2}}) \label{eq:num_on_zeros} \end{equation}
if $f(x) = 0$ and
\begin{equation} \mathbb{E}\left[\# \left( Z_{T_k s_k} \cap B(x,\frac{R}{\sqrt{k}}) \right)\right] - k \left( 1 - \frac{1}{1 + \tan^2(\frac{R}{\sqrt{k}})} \right) = k^{-1} \frac{R^{2} L_1(x)}{2}  + O(k^{-\frac{3}{2}}) \label{eq:num_out_zeros} \end{equation}
if $f(x) \neq 0$, where $L_1$ is such that $i \partial \bar{\partial} \log f^2 = L_1 \omega_{\text{FS}}$. So for a fixed but large $k$, $\mathcal{E}(x,R,k,N)$ should be close, for large $N$, either to $\frac{C_1(R)}{2\pi}$ if $f(x) = 0$ or to $ k^{-1} \frac{R^{2} L_1(x)}{2}$ if $f(x) \neq 0$. 

\paragraph{First example.} Firstly, we consider the height function $f = x_3$ on $S^2 \subset \R^3$, with $(x_1,x_2,x_3)$ the Cartesian coordinates in $\R^3$. The operator 
\[ T_k = \frac{1}{k+2} \left( 2 z \frac{\dd}{\dd z} - k \text{Id} \right) \]
acting on $\C_k[z]$ is a Berezin-Toeplitz operator with principal symbol $f$. Let $s_k$ be a random polynomial in $\C_k[z]$ as in Equation \eqref{eq:rand_sect}. Since $T_k e_{\ell,k} = \frac{\ell - 2k}{k+2} e_{\ell,k}$ for any $\ell \in \{0, \ldots, k\}$, the zeros of $T_k s_k$ are the zeros of the random polynomial
\begin{equation} T_k s_k =  \sum_{\ell=0}^k \frac{\ell - 2k}{k+2} \alpha_{\ell,k}  e_{\ell,k} \label{rand_height}\end{equation}
and can be computed numerically. So we can locate them and compute $\mathcal{E}$ as in Equation \eqref{eq:empirical}. We compare this quantity to the theoretical limits displayed in Equation \eqref{eq:num_on_zeros} and Equation \eqref{eq:num_out_zeros}. 

Since $n = 1$, the universal constant appearing in Equation \eqref{eq:num_on_zeros} is 
\begin{equation} \frac{C_1(R)}{2\pi} = 1 - \frac{1}{\sqrt{1  + 2 R^2}}.  \label{eq:C1}\end{equation}
This equality is confirmed numerically in Figure \ref{fig:R_var} by computing $\mathcal{E}(x,R,k,N)$ for some $x \in f^{-1}(0)$.

We also look at what happens outside $f^{-1}(0)$. Hence we need to compute the term $L_1$ appearing in the right-hand side of Equation \eqref{eq:num_out_zeros}. For this, note that the height function $f = x_3$ reads $f(z) = \frac{|z|^2 - 1}{|z|^2 + 1} $ in the affine holomorphic coordinate $z$, so that if $z \notin f^{-1}(0)$, then
\[ (\partial \bar{\partial} \log f^2)_z = - \frac{4(1 + |z|^4)}{(|z|^2 - 1)^2 (|z|^2 + 1)^2} \dd z \wedge \dd \bar{z}. \]
This implies, using that $\omega_{\text{FS}} = \frac{i \dd z \wedge \dd \bar{z}}{(1 + |z|^2)^2}$, that Equation \eqref{eq:num_out_zeros} becomes in this case \begin{equation} \mathbb{E}\left[\# \left( Z_{T_k s_k^{(m)}} \cap B(\pi_N^{-1}(z),\frac{R}{\sqrt{k}}) \right)\right] - k \left( 1 - \frac{1}{1 + \tan^2(\frac{R}{\sqrt{k}})} \right) = - \frac{2 k^{-1} R^{2} (1 + |z|^4)}{(|z|^2 - 1)^2} + O(k^{-\frac{3}{2}}). \label{eq:out_zeros_height} \end{equation}
This is checked in Figure \ref{fig:R_var_outside}.  

\begin{figure}[H]
  \begin{center}
    \includegraphics[width=0.9\linewidth]{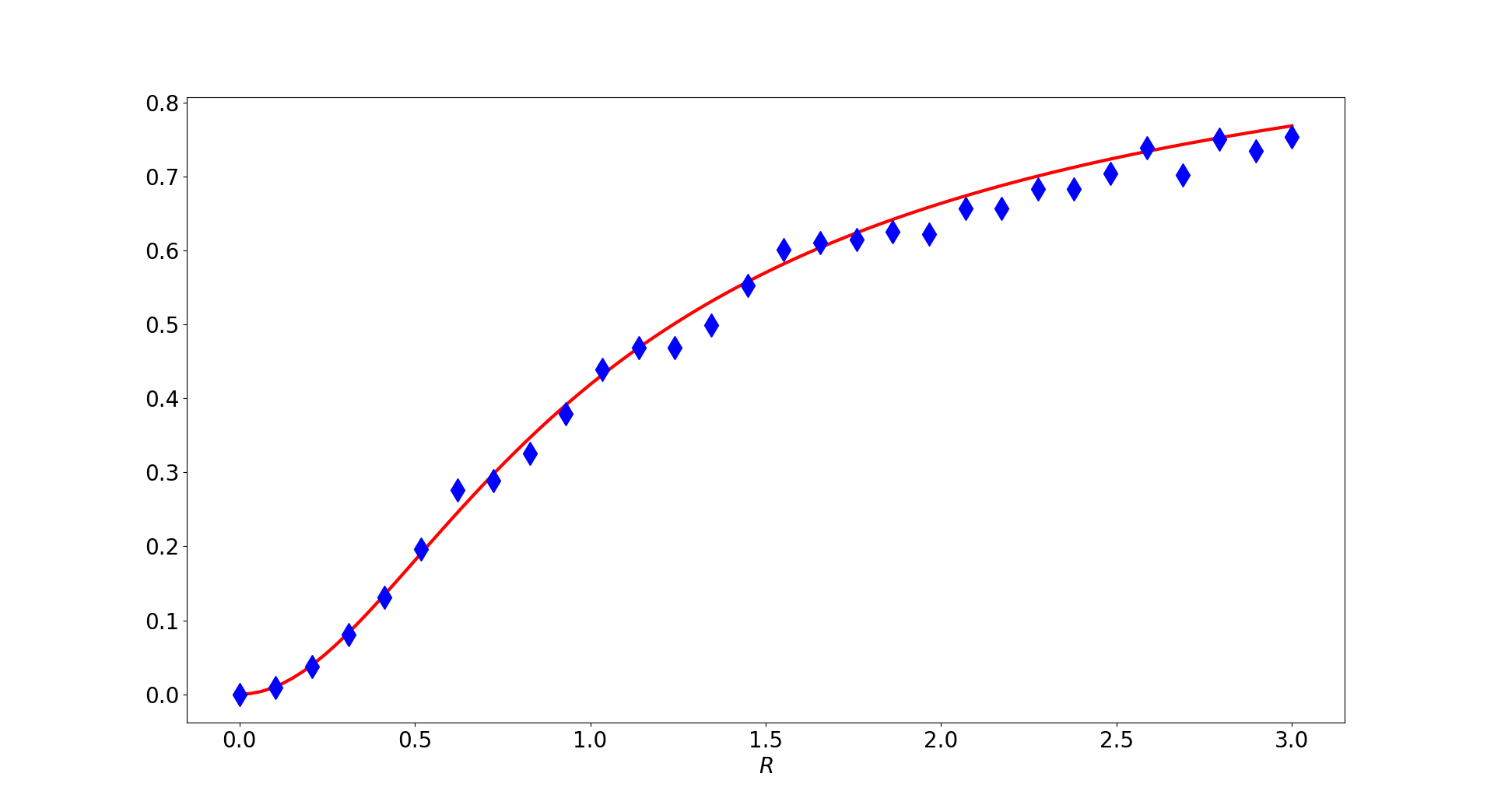} 
  \end{center}
  \caption{The blue diamonds are the numerical values of $\mathcal{E}(x,R,k,N)$ (see Equation \eqref{eq:empirical}) for $x = (1,0,0)$, $k = 400$, $N = 1000$ and various values of $R$. The solid red line is the graph of $\frac{C_1}{2\pi}$, see Equation \eqref{eq:C1}.}
  \label{fig:R_var}
\end{figure}

\begin{figure}[H]
  \begin{center}
    \includegraphics[width=0.9\linewidth]{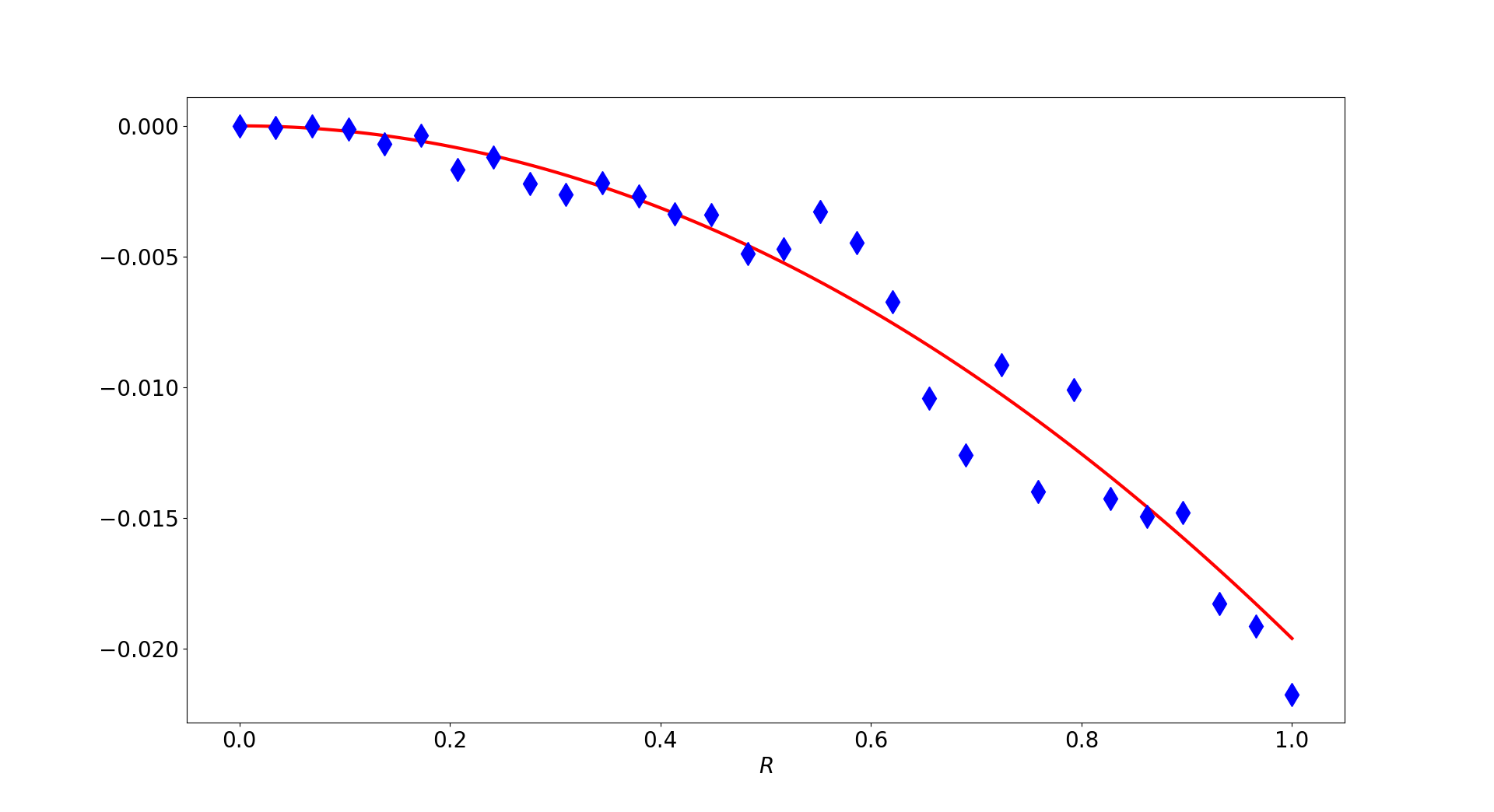} 
  \end{center}
  \caption{The blue diamonds are the numerical values of $\mathcal{E}(x,R,k,N)$ (see Equation \eqref{eq:empirical}) for $z =0$, $k = 100$, $N = 100000$ and various values of $R$. The solid red line is the graph of $R \mapsto - \frac{2 k^{-1} R^{2} (1 + |z|^4)}{(|z|^2 - 1)^2}$ for these values of $k$ and $z$, see Equation \eqref{eq:out_zeros_height}.}
  \label{fig:R_var_outside}
\end{figure}

\paragraph{Second example.} Secondly, we consider the function $f_{\lambda} = x_1 x_2 - \lambda$ on $S^2$, where $0 < \lambda < \frac{1}{2}$ (so that $f_{\lambda}$ vanishes transversally). The operator $T_k = T_k(x_1) T_k(x_2) - \lambda \text{Id}$ is a Berezin-Toeplitz operator with principal symbol $f_{\lambda}$. Using \cite[Example 5.2.4]{LFbook}, one can compute its matrix in the orthonormal basis $(e_{\ell,k})_{0 \leq \ell \leq k}$ as follows: 
\[ \forall \ell \in \{0, \ldots, k \} \qquad  T_k e_{\ell,k} = \frac{-i}{(k+2)^2} \left( \mu_{\ell,\ell-2,k} e_{\ell-2,k} - \mu_{\ell+2,\ell,k} e_{\ell+2,k} \right) - \lambda e_{\ell,k} \]
where
\[ \mu_{p,q,k} = \sqrt{p (p-1) (k - q)(k - q - 1)} \ \text{ if } p,q \in \{ 2, \ldots, k-2 \}  \]
and $\mu_{p,q,k} = 0$ otherwise. So if $s_k$ is a random holomorphic section as in Equation \eqref{eq:rand_sect}, then we compute $T_k s_k$ by applying this matrix and locate its zeros numerically. In Figure \ref{fig:inverse_xy}, we show how our results allow us to recover the set $f_{\lambda}^{-1}(0)$ by computing the quantity $\mathcal{E}(x,R,k,N)$ as in Equation \eqref{eq:empirical} for a large number of values of $x$; indeed, recall (see Equations \eqref{eq:num_on_zeros} and \eqref{eq:num_out_zeros} and the discussion after them) that for $N$ sufficiently large, this quantity is close to $\frac{C_1(R)}{2\pi} >  0$ when $f_{\lambda}(x) = 0$ and is close to a $O(k^{-1})$ otherwise.

\begin{figure}[H]
  \begin{subfigure}{.49\textwidth}
    \centering
    \includegraphics[width=\linewidth]{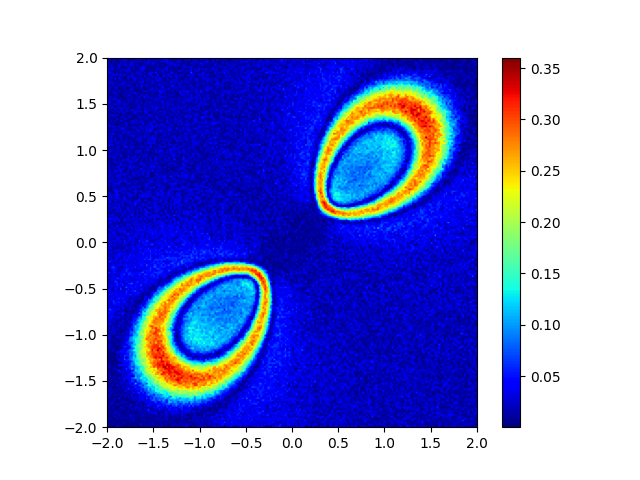}
  \end{subfigure}
  \begin{subfigure}{.49\textwidth}
    \centering
    \includegraphics[width=0.68\linewidth]{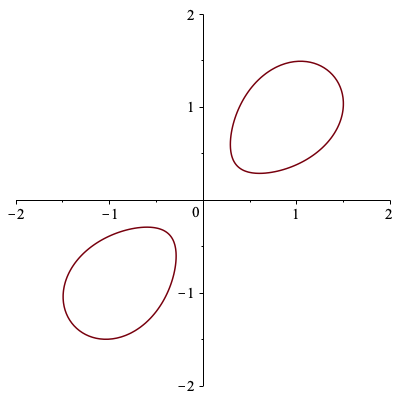}
  \end{subfigure}
  \caption{Reconstruction of the set $f_{\lambda}^{-1}(0)$ for $f_{\lambda} = x_1 x_2 - \lambda$ on $S^2$, with $\lambda = \frac{1}{3}$, after stereographic projection. On the left we display the values of $\mathcal{E}(z,R,k,N)$ (see Equation \eqref{eq:empirical}) for $R = \frac{1}{\sqrt{2}}$, $k = 100$, $N = 1000$, and $z$ taken on a $200 \times 200$ grid discretizing the square $\{ |\Re(z)|, |\Im(z)| \leq 2 \}$. On the right we show the level set $f_{\lambda}^{-1}(0)$ for $\lambda = \frac{1}{3}$.}
  \label{fig:inverse_xy}
\end{figure}

\appendix

\section{Appendix: a proof of Theorem \ref{symbol_comp}}
\label{sect:appendix}

In this appendix, we show how to derive Theorem \ref{symbol_comp} from \cite{Cha03}. Once again, we stress that Theorem \ref{symbol_comp} already exists in the literature (see for instance \cite{Cha03,ma-marinescu-toeplitz}). Our goal here is to write a proof with our notation and conventions (which differ from those of \cite{Cha03} and \cite{ma-marinescu-toeplitz}) as the explicit values of the constants appearing in the statement have been intensively used throughout the paper. 

\begin{proof}[Proof of Theorem \ref{symbol_comp}]
Since $T_k - k^{-1} T_k(f_1)$ and $S_k - k^{-1} T_k(g_1)$ are Berezin-Toeplitz operators with respective principal symbols $f$ and $g$ and vanishing subprincipal symbols, it suffices to consider the case $f_1 = 0 = g_1$. Moreover the terms of order $k^{-\ell}$, $\ell \geq 2$ in the symbols of $T_k$ and $S_k$ do not contribute to $b_1$, so we may assume that $T_k = T_k(f_0)$ and $S_k = T_k(g_0)$. We write the asymptotic expansion of the kernel of $B_k$ on the diagonal as 
\[ B_k(x,x) = \sum_{\ell \geq 0} k^{-\ell} b_{\ell}(f_0,g_0)(x) + O(k^{-\infty}). \]
We want to compute the term $b_1$ in the symbol
\[ \sigma(B_k) = \sum_{\ell \geq 0} \hbar^{\ell} b_{\ell}(f_0,g_0). \]
Recall from \cite{Cha03} that the covariant and contravariant symbols of $T_k$ are defined as
\[ \sigma_{\text{cov}}(T_k) = \sigma(T_k) \sigma(\Pi_k)^{-1}, \quad \sigma_{\text{cont}}(T_k(f)) = f. \]
The associated star-products, $\star$, $\star_{\text{cov}}$ and $\star_{\text{cont}}$ are
\[ \sigma(ST) = \sigma(S) \star \sigma(T), \quad \sigma_{\text{cov}}(ST) = \sigma_{\text{cov}}(S) \star_{\text{cov}} \sigma_{\text{cov}}(T), \quad \sigma_{\text{cont}}(ST) = \sigma_{\text{cont}}(S) \star_{\text{cont}} \sigma_{\text{cont}}(T). \]
So by definition, 
\[ \sigma(T_k(f_0) T_k(g_0)) = \sigma_{\text{cov}}(T_k(f_0) T_k(g_0)) \sigma(\Pi_k). \]
Let $\Psi$ be the isomorphism sending $\sigma_{\text{cont}}$ to $\sigma_{\text{cov}}$, so that
\[ \sigma(T_k(f_0) T_k(g_0)) = \Psi(\sigma_{\text{cont}}(T_k(f_0) T_k(g_0))) \sigma(\Pi_k) = \Psi(f_0 \star_{\text{cont}} g_0) \sigma(\Pi_k). \]
We know from \cite[Proposition 4]{Cha03} that $\Psi(f) = f + \hbar \Delta f + O(\hbar^2)$ and that
\[ f_0 \star_{\text{cont}} g_0 = f_0 g_0 - 2 \hbar \sum_{\ell, m = 1}^n G^{\ell,m} \frac{\partial f_0}{\partial z_{\ell}} \frac{\partial g_0}{\partial \bar{z}_m} + O(\hbar^2)  \]
(note the different conventions for $G_{\ell,m}$ between \cite{Cha03} and the present paper). Furthermore, it is stated in \cite[Corollary 2]{Cha03} that
\[ \sigma(\Pi_k) = 1 + \hbar \frac{r}{2} + O(\hbar^2) \]
where $r$ is the scalar curvature of $M$. Consequently,
\[ \begin{split} \sigma(T_k(f_0) T_k(g_0)) & = \left( f_0 \star_{\text{cont}} g_0 + \hbar \Delta(f_0 \star_{\text{cont}} g_0) \right) \left( 1 + \hbar \frac{r}{2} + O(\hbar^2) \right) \\
& = \left( f_0 g_0 - 2\hbar \sum_{\ell, m = 1}^n G^{\ell,m} \frac{\partial f_0}{\partial z_{\ell}} \frac{\partial g_0}{\partial \bar{z}_m} + \hbar \Delta(f_0 g_0) + O(\hbar^2) \right) \left( 1 + \hbar \frac{r}{2} + O(\hbar^2) \right) \\
& = f_0 g_0 + \hbar \left( \Delta(f_0 g_0) -2 \sum_{\ell, m = 1}^n G^{\ell,m} \frac{\partial f_0}{\partial z_{\ell}} \frac{\partial g_0}{\partial \bar{z}_m}  + \frac{r f_0 g_0}{2} \right) + O(\hbar^2). \end{split} \]
But one readily checks that 
\[ \Delta(fg) = f_0 \Delta g_0 + g_0 \Delta f_0 + 2 \sum_{\ell, m = 1}^n G^{\ell,m} \frac{\partial f_0}{\partial z_{\ell}} \frac{\partial g_0}{\partial \bar{z}_m} + 2 \sum_{\ell, m = 1}^n G^{\ell,m} \frac{\partial g_0}{\partial z_{\ell}} \frac{\partial f_0}{\partial \bar{z}_m}. \]
Consequently, we finally obtain that 
\[ \sigma(T_k(f_0) T_k(g_0)) = f_0 g_0 + \hbar \left( f_0 \Delta g_0 + g_0 \Delta f_0 + \frac{r f_0 g_0}{2} + 2 \sum_{\ell, m = 1}^n G^{\ell,m} \frac{\partial g_0}{\partial z_{\ell}} \frac{\partial f_0}{\partial \bar{z}_m}  \right) + O(\hbar^2).  \]
But, in view of Equation \eqref{eq:norme} and since $f_0$ and $g_0$ are real-valued,
\[ 2 \sum_{\ell, m = 1}^n G^{\ell,m} \frac{\partial g_0}{\partial z_{\ell}} \frac{\partial f_0}{\partial \bar{z}_m} = G(\partial g_0, \partial f_0). \]
\end{proof}

\bibliographystyle{plain}
\bibliography{toeplitz}
\end{document}